\newcommand{\mybox}{%
    \collectbox{%
        \setlength{\fboxsep}{1pt}%
        \fbox{\BOXCONTENT}%
    }%
}
\theoremstyle{plain}
\newtheorem{thm}{Theorem}
\numberwithin{thm}{section}
\newtheorem{lem}[thm]{Lemma}
\newtheorem{prop}[thm]{Proposition}
\newtheorem{cor}[thm]{Corollary}
\newtheorem{rmk}[thm]{Remark}
\theoremstyle{definition}
\newtheorem{defns}[thm]{Definitions}
\newtheorem{conj}[thm]{Conjecture}
\newtheorem{ex}[thm]{Example}
\begin{document}

\title{Consecutive patterns, Kostant's problem and type $A_{6}$}

\author{Samuel Creedon and Volodymyr Mazorchuk}

\date{}

\begin{abstract}
For a permutation $w$ in the symmetric group $\mathfrak{S}_{n}$, let $L(w)$ denote the simple highest weight module in the principal block of the BGG category $\mathcal{O}$ for the Lie algebra $\mathfrak{sl}_{n}(\mathbb{C})$. We first prove that $L(w)$ is Kostant negative whenever $w$ consecutively contains certain patterns. We then provide a complete answer to Kostant's problem in type $A_{6}$ and show that the indecomposability conjecture also 
holds in type $A_{6}$, that is, applying an indecomposable projective functor to a simple module outputs either
an indecomposable module or zero.
\end{abstract}

\maketitle

{\bf Keywords:} Kostant's Problem, Consecutive Patterns, Type $A$. \\
{\bf Mathematics Subject Classification 2010:} Primary 17B10; Secondary 05E10

\section{Introduction}

Elements of the symmetric group 
$\mathfrak{S}_{n}$ index the isomorphism classes  of simple modules
in the principal block of  the Bernstein-Gelfand-Gelfand category $\mathcal{O}$ for the Lie algebra
$\mathfrak{sl}_n$, see \cite{BGG76,Hu08}. For each of these simple  modules one can ask a classical
question, called {\em Kostant's problem}, see \cite{Jo80}, namely, whether the universal enveloping algebra
of $\mathfrak{sl}_n$ surjects onto the algebra of  adjointly finite linear endomorphisms of
this module.  The answer is not known in general, however, a number of special cases are settled,
see \cite{Ma23} for an overview. The problem is important as it provides essential  information about
the action of the monoidal category of projective endofunctors of $\mathcal{O}$ on the simple
module in question, see \cite{KMM23}. Recently it was shown in \cite{CM24} that, asymptotically, the answer 
is negative for almost all modules.

For some small rank cases, the problem is solved. Case $n=2$ is trivial and case $n=3$
is an easy exercise. Cases $n=4$ and $n=5$ are not trivial  but were settled in \cite{KM10} 
(using, in particular, some ideas from
\cite{MS08b}). Case $n=6$ was recently settled in \cite{KMM23}. The latter step required both,
a significant theoretical component developed in \cite{KMM23}, and extensive explicit computations
in the Hecke algebra of 
$\mathfrak{S}_6$ (using SageMath \cite{St25} and the CHEVIE package in GAP3 \cite{Mi15}). The starting point of the present paper
was our attempt at $n=7$, which we completely resolve here. As
in the $n=6$ case, this new step required development of a significant theoretical component 
as well as extensive explicit computations in the Hecke algebra.

Kostant's problem is closely related to some  other open questions about simple highest weight
modules. One of them is the so-called {\em K{\aa}hrstr{\"o}m's Conjecture} which provides
a (conjectural!) combinatorial  reformulation of  Kostant's problem  in terms of the
combinatorics  of the right Kazhdan-Lusztig order on the symmetric group, see \cite{KMM23}. 
Like Kostant's problem, K{\aa}hrstr{\"o}m's conjecture is wide open. One important 
insight of this conjecture is  the relation of Kostant's problem to the 
right Kazhdan-Lusztig order. This reveals a hidden difficulty of Kostant's problem: no combinatorial interpretation of the right Kazhdan-Lusztig order
is known.

Another related open question is the Indecomposability Conjecture from \cite{KiM16} which
asserts that applying an indecomposable projective functor to a simple  highest weight
module outputs either zero or an indecomposable module. The main result  of \cite{KMM23}
connects the three conjectures together. The Indecomposability conjecture is
established, for  $n\leq 6$, in \cite{CMZ19}. In the present paper,  we prove the Indecomposability
Conjecture for $n=7$ and give numerous new examples which satisfy K{\aa}hrstr{\"o}m's Conjecture.

Our arguments are based on a wise preselection before  we dive into a finite brute force
case-by-case analysis. There exist powerful reduction techniques for Kostant's problem based 
on parabolic induction, developed in \cite{Ma05,MS08,K10}. One very important simplification
is that the answer to Kostant's  problem is an invariant of a Kazhdan-Lusztig left cell,
see \cite{MS08}. This implies that it is enough to answer Kostant's problem for
involutions in $\mathfrak{S}_n$, as each left cell contains a unique involution.
There are $232$ involution in $\mathfrak{S}_7$
(see the sequence A000085 in \cite{OEIS}). Parabolic induction (combined with
the known complete solution for all smaller rank cases) deals with $161$ of these 
involution. The recent paper \cite{MMM24} provides a complete solution to Kostant's
problem  for fully commutative elements, which helps with another $29$ cases.
The total of $42$ cases remains.

At the next step we employ a new general theoretical idea.  We observe that consecutive
containment of certain patterns in a permutation necessarily results in the negative 
answer to Kostant's  problem for the corresponding simple  highest weight module.  
In \cite{CM24} we used this observation for the pattern $\mathtt{2143}$ (as it turns out,
the simplest one of those that we discovered) to show that, asymptotically, the answer 
to Kostant's  problem is negative for almost all involutions and for almost all permutations. 
In this paper we prove a
similar  result for other patterns, namely,  for the patterns 
$\mathtt{3142}$, $\mathtt{14325}$, $\mathtt{15324}$, $\mathtt{25314}$ and $\mathtt{24315}$.
This deals with $25$ more cases, leaving $17$ remaining. In fact, up to the natural
symmetry of the root system, we only have $11$ remaining cases, which is fairly manageable.
For these $11$ cases, we do a brute force case-by-cases analysis involving GAP3 
computations.

The utility of computations is justified by K{\aa}hrstr{\"o}m's Conjecture
and its connection to Kostant's problem established in \cite{KMM23}.
Essentially, we need to solve some equations in the Hecke algebra
(or show that these equations do not have any solutions). The equations 
are ``small'', but  they have two parameters and involve both the
Kazhdan-Lusztig and the dual  Kazhdan-Lusztig bases of the Hecke algebra
of   $\mathfrak{S}_7$ (and $\mathfrak{S}_7$ itself  is not  really small).
At the end, it turned out that the answer to Kostant's problem is positive
for exactly $125$ of the $232$ involutions. This result, in itself,  
erased our (not very strong) original hope that Kostant positive
involutions are enumerated by Motzkin numbers (as was suggested
by the known answers  for $n\leq 6$).

The Indecomposability conjecture  is automatic for 
all Kostant positive involutions but it is unclear for Kostant negative 
involutions. The paper \cite{CMZ19} proposes a reduction based on the 
support (the set of simple reflections involved) for the element 
indexing the indecomposable projective functor in question. Using
this reduction and the list of Kostant negative involution, we
reduce the Indecomposability conjecture to $18$ involutions 
indexing the indecomposable projective functors and $22$ involutions
indexing the simple highest weight modules. Most outcomes 
here are zero, however, for each simple highest weight module on
the list, there were a few indecomposable projective functors
which resulted in a non-zero module. In most of these cases
the GAP3 computations combined with a result from \cite{KMM23}
imply that the endomorphism algebra of the resulting module
is positively graded, hence local. So, the module in question
is indecomposable. The few remaining cases are dealt with 
using various ad hoc tricks.

The paper  is organized as follows: All necessary preliminaries are collected
in \Cref{s2}. In \Cref{Sec:KostantsProblemConsecutivePatterns} 
we collected our theoretical results on the role some consecutive patterns
play in Kostant's problem. \Cref{Sec:KostantsProblemA6} contains the
resolution of Kostant's problem for $\mathfrak{S}_7$, while in 
\Cref{Sec:IndecompConjA6} we prove the Indecomposability conjecture for 
$\mathfrak{S}_7$. Lastly, in the appendix we illustrate the use of GAP3 to carry out the
computations reported here.

\subsection*{Acknowledgements}

The first author is partially supported by Vergstiftelsen.
The second author is partially supported by the Swedish Research Council.
All computations were done using the CHEVIE package in GAP3 \cite{Mi15}, with some auxiliary computations done in SageMath \cite{St25}.

\section{Preliminaries}\label{s2}

\subsection{Symmetric Group}

For $n\in\mathbb{Z}_{\geq 0}$, let $\mathfrak{S}_{n}$ be the symmetric group of degree $n$. As a Coxeter group, it is generated by the set of simple reflections $S_{n}:=\{s_{1},\dots, s_{n-1}\}$ subject to the type $A_{n-1}$ braid relations. Let $\mathfrak{I}_{n}:=\{x\in\mathfrak{S}_{n} \ | \ x^{2}=e\}$ denote the subset of \emph{involutions} in $\mathfrak{S}_{n}$.

A tuple/string $(s_{i_{k}},\dots,s_{i_{1}})$ of simple reflections is called a \emph{reduced word of} $x\in\mathfrak{S}_{n}$ if $x=s_{i_{k}}\cdots s_{i_{1}}$ and $k\in\mathbb{Z}_{\geq 0}$ minimal, in which case $k$ is called the \emph{length} of $x$ and we write $\ell(x):=k$. Let $\leq$ denote the \emph{Bruhat order} of $\mathfrak{S}_{n}$, where $x\leq y$ if there exists a substring of some (or every) reduced word of $y$ which gives a reduced word for $x$. The \emph{left and right descent sets} associated to $x$ are 
\[ D_{L}(x):=\{s\in S_{n} \ | \ sx<x\} \hspace{2mm} \text{ and } \hspace{2mm} D_{R}(x):=\{s\in S_{n} \ | \ xs<x\}, \]
respectively. Since $D_{L}(x)=D_{R}(x^{-1})$, when $x\in\mathfrak{I}_{n}$ we will write $D(x):=D_{L}(x)=D_{R}(x)$. The \emph{support} of $x$ is the subset $\mathsf{Sup}(x)\subset S_{n}$ consisting of all simple reflections appearing in a (or every) reduced word of $x$. The (strong right) \emph{Bruhat graph} associated to $\mathfrak{S}_{n}$ is the graph whose vertices are the elements of $\mathfrak{S}_{n}$ and whose edges are the pairs $(x,y)$ where there exists an $s\in S_{n}$ such that $xs=y$. We call such a pair of elements $x$ and $y$ (strong right) \emph{Bruhat neighbours}, and we call a path $(w_{1},\dots,w_{k})\in\mathfrak{S}_{n}^{\times k}$ in this graph a (strong right) \emph{Bruhat walk}. 

We also view $\mathfrak{S}_{n}$ as the group of bijections/permutations of the set $[n]:=\{1,2,\dots,n\}$. We will make use of one-line notation for permutations: for $x\in\mathfrak{S}_{n}$ we write $x=\mathtt{i}_{1}\mathtt{i}_{2}\cdots\mathtt{i}_{n}$ with the latter expression being a string whose $k$-th entry is $\mathtt{i}_{k}:=x(k)$. With such an $x$ and any $y\in\mathfrak{S}_{n}$, we have $xy=\mathtt{i}_{y(1)}\mathtt{i}_{y(2)}\cdots\mathtt{i}_{y(n)}$. Therefore, the right regular action of $\mathfrak{S}_{n}$ corresponds in one-line notation to permuting the positions of the entries $\mathtt{i}_{k}$. One can determine the right descent set of a permutation from its one-line notation by the following well-known equivalence:
\begin{equation}\label{Eq:DescentSetDescription}
xs_{k}<x \iff x(k)=\mathtt{i}_{k}>\mathtt{i}_{k+1}=x(k+1).
\end{equation}

\subsection{The Robinson-Schensted correspondence}

For $n\geq 0$, we let $\Lambda_{n}$ denote the set of all \emph{integer partitions of $n$}, or equivalently the set of \emph{Young diagrams of size $n$}. For any $\lambda\in\Lambda_{n}$, we let $\mathsf{SYT}_{n}(\lambda)$ denote the set of \emph{standard Young tableaux} of shape $\lambda$. Then we denote by
\[ \mathtt{RS}:\mathfrak{S}_{n}\rightarrow\bigsqcup_{\lambda\in\Lambda_{n}}\mathsf{SYT}_{n}(\lambda)\times\mathsf{SYT}_{n}(\lambda) \] 
the \emph{Robinson-Schensted correspondence} (see \cite{Sa01}). This map is a bijection defined by \emph{Schensted's insertion algorithm} from \cite{Sc61}. For $w\in\mathfrak{S}_{n}$ we set $\mathtt{RS}(w):=(\mathtt{P}_{w},\mathtt{Q}_{w})$. It is known that $s_{i}\in D_{R}(w)$ if and only if the row containing $i$ in $\mathtt{Q}_{w}$ is higher than the row containing $i+1$ in $\mathtt{Q}_{w}$. An analogous statement holds for $D_{L}(w)$ and $\mathtt{P}_{w}$. We let $\mathsf{sh}(w)$ be the underlying Young diagram/integer partition of $\mathtt{Q}_{w}$ (or equivalently $\mathtt{P}_{w}$), and let $\preceq$ be the \emph{dominance order} on $\Lambda_{n}$.
\begin{ex}
Consider $w=\mathtt{1524376}\in\mathfrak{S}_{7}$. Then $\mathtt{RS}(w)=(\mathtt{P}_{w},\mathtt{Q}_{w})$ where
\[ \mathtt{P}_{w}={\begin{ytableau}1&2&3&6\\4&7\\5\end{ytableau}}, \hspace{1mm} \text{ and } \hspace{1mm} \mathtt{Q}_{w}={\begin{ytableau}1&2&4&6\\3&7\\5\end{ytableau}}. \]
We have $D_{R}(w)=\{s_{2},s_{4},s_{6}\}$ where, for example, $s_{2}$ belongs to $D_{R}(w)$ since $2$ appears in the first row of $\mathtt{Q}_{w}$ while $3$ appears in the second row. We have that $\mathsf{sh}(w)=(4,2,1)$, and as an example of the dominance order, we have $(3,2,2)\preceq\mathsf{sh}(w)\preceq(5,2)$.
\end{ex} 

\subsection{Hecke Algebra}
\label{SubSec:HeckeAlgebra}

We denote by $\mathcal{H}_{n}$ the \emph{Hecke algebra} associated to $\mathfrak{S}_{n}$ over the ring $\mathbb{Z}[v,v^{-1}]$. We have the standard basis $\{H_{w} \ | \ w\in\mathfrak{S}_{w}\}$ for $\mathcal{H}_{n}$ which satisfies the relations
\begin{equation}\label{Eq:HeckePresentation}
H_{x}H_{y}=H_{xy}, \hspace{4mm} H_{w}H_{s}=(v^{-1}-v)H_{w}+H_{ws}
\end{equation}
for all $x,y\in\mathfrak{S}_{n}$ such that $\ell(xy)=\ell(x)+\ell(y)$, and all $w\in\mathfrak{S}_{n}$ and $s\in D_{R}(w)$. The standard basis and above relations give a presentation for $\mathcal{H}_{n}$ as a $\mathbb{Z}[v,v^{-1}]$-algebra. Consider the group ring $\mathbb{Z}\mathfrak{S}_{n}$ associated to $\mathfrak{S}_{n}$. By \Cref{Eq:HeckePresentation}, we have a ring epimorphism $\mathsf{ev}:\mathcal{H}_{n}\rightarrow\mathbb{Z}\mathfrak{S}_{n}$ given on the standard basis by $H_{w}\mapsto w$ and on the vairable $v\mapsto1$. 

Recalling from \cite{KL79}, we also have the \emph{Kazhdan-Lusztig basis} $\{\underline{H}_{w} \ | \ w\in\mathfrak{S}_{n}\}$. However, we use the normalization in \cite{So07}. The transition matrix between this and the standard basis is unitriangular with respect to (any total refinement of) the Bruhat order, and the non-diagonal coefficients belong to $v\mathbb{Z}[v]$. That is to say, for any $w\in\mathfrak{S}_{n}$, we have the equality
\[ \underline{H}_{w}=H_{w}+\sum_{x<w}p_{x,w}H_{x}, \text{ where } p_{x,y}\in v\mathbb{Z}[v], \]
The \emph{Kazhdan-Lusztig $\mu$-function} is $\mu(x,w)=\mu(w,x):=[v]p_{x,w}$ (the coefficient of $v$ in $p_{x,w}$). For any $w,v\in\mathfrak{S}_{n}$ which are Bruhat neighbours, it is known that $\mu(w,v)=1$. Lastly, we have a bilinear form $(-,-):\mathcal{H}_{n}\times\mathcal{H}_{n}\rightarrow\mathbb{Z}[v,v^{-1}]$ given on any pair $X,Y\in\mathcal{H}_{n}$ by
\[ (X,Y):=[H_{e}]XY, \] 
with $[H_{e}]XY$ denoting the coefficient of $H_{e}$ in $XY$ when expressed in terms of the standard basis. The \emph{dual Kazhdan-Lusztig basis} $\{\underline{\hat{H}}_{w} \ | \ w\in\mathfrak{S}_{n}\}$ of $\mathcal{H}_{n}$ is uniquely defined by $(\underline{H}_{x},\underline{\hat{H}}_{y})=\delta_{x,y^{-1}}$ where $\delta$ is the Kronecker delta. For any permutation $w\in\mathfrak{S}_{n}$ and simple reflection $s\in S_{n}$, we have the well-known equality (see for example \cite[Proposition 46]{CMZ19})
\begin{equation}\label{Eq:DualKLBRightMultHs}
\underline{\hat{H}}_{w}\underline{H}_{s}=
\begin{cases}
\displaystyle (v+v^{-1})\underline{\hat{H}}_{w}+\underline{\hat{H}}_{ws}+\sum_{\substack{x>w \\ xs>x}}\mu(x,w)\underline{\hat{H}}_{x}, & s\in D_{R}(w) \\
0, & s\not\in D_{R}(w)
\end{cases}
\end{equation}
\begin{lem}\label{Rmk:NonNegCoeffPolys}
Given any $w\in\mathfrak{S}_{n}$ and $s_{i_{1}},\dots,s_{i_{k}}\in S_{n}$, we have that
\[ \underline{\hat{H}}_{w}\underline{H}_{s_{i_{1}}}\underline{H}_{s_{i_{2}}}\cdots\underline{H}_{s_{i_{k}}}\in\mathbb{Z}_{\geq 0}[v+v^{-1}]\{\underline{\hat{H}}_{x} \ | \ x\in\mathfrak{S}_{n}\}. \]
\end{lem}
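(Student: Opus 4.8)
The plan is to prove this by induction on $k$, the number of Kazhdan–Lusztig generators multiplied on the right. The base case $k=0$ is trivial since $\underline{\hat{H}}_w$ is itself a basis element with coefficient $1 \in \mathbb{Z}_{\geq 0}[v+v^{-1}]$. For the inductive step, suppose we have already shown that
\[
\underline{\hat{H}}_{w}\underline{H}_{s_{i_{1}}}\cdots\underline{H}_{s_{i_{k-1}}}=\sum_{x\in\mathfrak{S}_{n}}c_{x}\,\underline{\hat{H}}_{x}
\]
with every $c_x\in\mathbb{Z}_{\geq 0}[v+v^{-1}]$. Multiplying on the right by $\underline{H}_{s_{i_k}}$ and expanding each term $\underline{\hat{H}}_x\underline{H}_{s_{i_k}}$ via \Cref{Eq:DualKLBRightMultHs}, I would observe that each such product is either $0$ (when $s_{i_k}\notin D_R(x)$) or a $\mathbb{Z}_{\geq 0}[v+v^{-1}]$-combination of dual Kazhdan–Lusztig basis elements: the term $(v+v^{-1})\underline{\hat{H}}_x$ has coefficient $v+v^{-1}$, the term $\underline{\hat{H}}_{xs_{i_k}}$ has coefficient $1$, and each term $\underline{\hat{H}}_y$ in the sum has coefficient $\mu(y,x)$, which is a non-negative integer. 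Hence $\underline{\hat{H}}_x\underline{H}_{s_{i_k}}\in\mathbb{Z}_{\geq 0}[v+v^{-1}]\{\underline{\hat{H}}_y\}$ for each $x$.

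The point is then that $\mathbb{Z}_{\geq 0}[v+v^{-1}]$ is closed under addition and multiplication (it is a subsemiring of $\mathbb{Z}[v,v^{-1}]$), so a non-negative combination of non-negative combinations is again a non-negative combination. Concretely, $\underline{\hat{H}}_{w}\underline{H}_{s_{i_{1}}}\cdots\underline{H}_{s_{i_{k}}}=\sum_x c_x\big(\underline{\hat{H}}_x\underline{H}_{s_{i_k}}\big)$, and substituting the expansions from the previous paragraph yields a sum in which every coefficient is obtained from the $c_x$ and the coefficients appearing in \Cref{Eq:DualKLBRightMultHs} using only addition and multiplication within $\mathbb{Z}_{\geq 0}[v+v^{-1}]$. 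Collecting coefficients of each fixed basis element $\underline{\hat{H}}_z$ preserves membership in $\mathbb{Z}_{\geq 0}[v+v^{-1}]$, completing the induction.

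There is essentially no genuine obstacle here; the statement is a formal consequence of \Cref{Eq:DualKLBRightMultHs} together with the non-negativity of the $\mu$-values. The only point requiring a word of care is to note that $\mu(x,w)\in\mathbb{Z}_{\geq 0}$, which is immediate since $\mu(x,w)=[v]p_{x,w}$ and $p_{x,w}\in v\mathbb{Z}_{\geq 0}[v]$ (this positivity of Kazhdan–Lusztig polynomials in type $A$ is classical and is implicit in the conventions set up above); also one should make sure the normalization of $\underline{H}_s$ being used is the one for which \Cref{Eq:DualKLBRightMultHs} holds with the factor $v+v^{-1}$, which is exactly the normalization from \cite{So07} fixed earlier. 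With these conventions in place, the induction goes through with no further input.
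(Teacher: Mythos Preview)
Your proof is correct and follows exactly the same approach as the paper's own proof, which is simply the one-line remark ``This follows by induction on $k$ and employing \Cref{Eq:DualKLBRightMultHs}.'' You have merely spelled out the details (the semiring structure of $\mathbb{Z}_{\geq 0}[v+v^{-1}]$ and the non-negativity of the $\mu$-values) that the paper leaves implicit.
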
 

\begin{proof}
This follows by induction on $k$ and employing \Cref{Eq:DualKLBRightMultHs}.
\end{proof}

\subsection{The Kazhdan-Lusztig Orders}

We let $\leq_{L}$ and $\leq_{R}$ denote the \emph{left} and \emph{right Kazhdan-Lusztig orders} on $\mathfrak{S}_{n}$. Recall, this means $x\leq_{L}y$ if there exists $X\in\mathcal{H}_{n}$ such that $\underline{H}_{y}$ appears with non-zero coefficient in the product $X\underline{H}_{x}$ when expressed in terms of the Kazhdan-Lusztig basis. Here $\leq_{R}$ is defined with $X$ acting on the right instead. It is well-known that $x\leq_{L}y$ if and only if $x^{-1}\leq_{R}y^{-1}$ for all $x,y\in\mathfrak{S}_{n}$, and that for $w\in\mathfrak{S}_{n}$ and $s\in S_{n}$, then
\begin{equation}\label{Eq:RDescentROrder}
ws<w \implies ws\leq_{R}w,
\end{equation}
Also, for any $x,y\in\mathfrak{S}_{n}$, we have by \cite[Theorem 5.1]{Ge06} and \cite[Proposition 2.4]{KL79},
\begin{equation}\label{Eq:ROrderDomDes}
x\leq_{R}y \implies \mathsf{sh}(y)\preceq\mathsf{sh}(x) \text{ and } D_{L}(x)\subset D_{L}(y).
\end{equation}
Let $\sim_{L}$ and $\sim_{R}$ denote the left and right equivalence relations induced from the orders $\leq_{L}$ and $\leq_{R}$ respectively. The equivalence classes associated to these relations are called the \emph{left} and \emph{right cells} respectively. These cells can be combintorially described by the Robinson-Schensted correspondence, in particular,  by \cite[Section 5]{KL79} (see also \cite[Corollary 5.6]{Ge06}) we have
\begin{equation}\label{Eq:CellsRS}
x\sim_{L}y \iff \mathtt{Q}_{x}=\mathtt{Q}_{y} \hspace{1mm} \text{ and }  \hspace{1mm} x\sim_{R}y \iff \mathtt{P}_{x}=\mathtt{P}_{y}.
\end{equation}
It is known that $(\mathtt{P}_{x^{-1}},\mathtt{Q}_{x^{-1}})=(\mathtt{Q}_{x},\mathtt{P}_{x})$, and thus $\mathtt{P}_{x}=\mathtt{Q}_{x}$ if and only if $x$ is an involution. Hence, from \Cref{Eq:CellsRS} above, each left and right cell contains a unique involution.

\subsection{Category $\mathcal{O}$}

Let $\mathfrak{sl}_{n}:=\mathfrak{sl}_{n}(\mathbb{C})$ be the \emph{complex special linear Lie algebra} of all traceless complex $n\times n$ matrices, and consider the \emph{standard triangular decomposition} 
\[ \mathfrak{sl}_{n}=\mathfrak{n}^{-}\oplus\mathfrak{h}\oplus\mathfrak{n}^{+}. \] 
The Weyl group associated to $\mathfrak{sl}_{n}$ is the symmetric group $\mathfrak{S}_{n}$, which acts naturally on the dual space $\mathfrak{h}^{*}$. We let $\rho\in\mathfrak{h}^{*}$ denote the half sum of all positive roots. For $w\in\mathfrak{S}_{n}$ and $\lambda\in\mathfrak{h}^{*}$ we have the \emph{dot-action} $w\cdot\lambda:=w(\lambda+\rho)-\rho$. We write $w_{0}:=\mathsf{n(n-1)\cdots 1}$ to denote the \emph{longest element} of $\mathfrak{S}_{n}$, and we write $\mathcal{U}(\mathfrak{sl}_{n})$ to denote the universal enveloping algebra of $\mathfrak{sl}_{n}$.

Let $\mathcal{O}:=\mathcal{O}(\mathfrak{sl}_{n})$ be the \emph{BGG category} associated to the triangular decomposition of $\mathfrak{sl}_{n}$ above, see \cite{BGG76,Hu08}. The simple objects of $\mathcal{O}$ are (up to isomorphism) the \emph{simple highest weight modules} $L(\lambda)$ for each $\lambda\in\mathfrak{h}^{*}$. The module $L(\lambda)$ is the simple top of the \emph{Verma module} $\Delta(\lambda)$. Consider the \emph{principal block} $\mathcal{O}_{0}:=\mathcal{O}_{0}(\mathfrak{sl}_{n})$, being the indecomposable summand of $\mathcal{O}$ containing the trivial $\mathfrak{sl}_{n}$-module. The simple objects of $\mathcal{O}_{0}$ are in bijection with $\mathfrak{S}_{n}$, given by $L(w):=L(w\cdot 0)$ for $w\in\mathfrak{S}_{n}$. We denote the indecomposable project cover of $L(w)$ in $\mathcal{O}_{0}$ by $P(w)$.

The principal block $\mathcal{O}_{0}$ is equivalent to the left module category for some finite-dimensional basic associative algebra $A$ (defined uniquely, up to isomorphism). This algebra is Koszul by \cite{So90}, and so it admits a Koszul $\mathbb{Z}$-grading. We denote by $\mathcal{O}_{0}^{\mathbb{Z}}$ the corresponding $\mathbb{Z}$-graded version of $\mathcal{O}_{0}$ (see \cite{St03}). For $w\in\mathfrak{S}_{n}$, the modules $L(w)$, $\Delta(w)$, and $P(w)$ admit graded lifts. We use the same notation to denote them. Fix the standard graded lift of $L(w)$ in degree $0$ and the standard graded lifts of $\Delta(w)$ and $P(w)$ so their simple tops are in degree $0$. Let $\langle 1\rangle$ denote the shift functor which sends degree $0$ modules to degree $-1$.

\subsection{Grothendieck Groups $\mathsf{Gr}(\mathcal{O}_{0})$ and $\mathsf{Gr}(\mathcal{O}_{0}^{\mathbb{Z}})$}

Let $\mathsf{Gr}(\mathcal{O}_{0}^{\mathbb{Z}})$ and $\mathsf{Gr}(\mathcal{O}_{0})$ denote the Grothendieck groups of $\mathcal{O}_{0}^{\mathbb{Z}}$ and $\mathcal{O}_{0}$ respectively, with the former being viewed as a $\mathbb{Z}[v,v^{-1}]$-module where $v$ acts by the shift $\langle 1\rangle$. For a module $M$ belonging to either $\mathcal{O}_{0}^{\mathbb{Z}}$ or $\mathcal{O}_{0}$, we will let $[M]$ denote its corresponding image in the respected Grothendieck group. By \cite{BB,BK,So92}, we have a $\mathbb{Z}[v,v^{-1}]$-module isomorphism $\mathsf{Gr}(\mathcal{O}_{0}^{\mathbb{Z}})\xrightarrow{\sim}\mathcal{H}_{n}$ given by
\[ [\Delta(w)]\mapsto H_{w}, \hspace{1mm} [P(w)]\mapsto\underline{H}_{w}, \text{ and } [L(w)]\mapsto\underline{\hat{H}}_{w}. \]
We then obtain a $\mathbb{Z}$-module isomorphism $\mathsf{Gr}(\mathcal{O}_{0})\xrightarrow{\sim}\mathbb{Z}\mathfrak{S}_{n}$ by precomposing the above isomorphism with the ring epimorphism $\mathsf{ev}:\mathcal{H}_{n}\rightarrow\mathfrak{S}_{n}$. As such, the forgetful functor $\mathcal{O}_{0}^{\mathbb{Z}}\rightarrow\mathcal{O}_{0}$, which simply forgets the grading, decategorifies to this ring epimorphism $\mathsf{ev}$.

\subsection{Projective Functors}

A \emph{projective functor} of $\mathcal{O}_{0}$ is a direct summand of $(-\otimes V)$ with $V$ some finite dimensional $\mathfrak{sl}_{n}$-module. Let $\mathcal{P}_{0}:=\mathcal{P}_{0}(\mathfrak{sl}_{n})$ be the monoidal category of such projective functors. By \cite[Theorem 3.3]{BG80}, the isomorphism classes of the indecomposable projective functors are in bijection with the elements of $\mathfrak{S}_{n}$. For each $w\in\mathfrak{S}_{n}$, we denote by $\theta_{w}\in\mathcal{P}_{0}$ the unique (up to isomorphism) indecomposable projective functor normalized so that $\theta_{w}P(e)\cong P(w)$. By \cite[Theorem 8.2]{St03}, indecomposable projective functors admit graded lifts which act on $\mathcal{O}_{0}^{\mathbb{Z}}$, and we use the same notation to denote them. Let $\mathcal{P}_{0}^{\mathbb{Z}}$ denote the corresponding $\mathbb{Z}$-graded version of $\mathcal{P}_{0}$. 

We summarise various properties of projective functors which will come into play later in this paper. Firstly, given $x,y\in\mathfrak{S}_{n}$, by \cite[Lemma~12]{MM11}, we have the equivalence
\begin{equation}\label{Eq:ThetaLNonZero}
\theta_{x}L(y)\neq 0 \iff x\leq_{R}y^{-1}.
\end{equation}
Combining \cite[Theorem~2.2]{KMM23} with Conjectures 14.2 P8 and Section 15 in \cite{Lu03}, we have
\begin{equation}\label{Eq:ThetaxL=ThetayLImpliesxly}
\theta_{x}L(w)\cong\theta_{y}L(w)\neq 0 \implies x\sim_{L}y.
\end{equation}
Lastly, for the following result, see for example \cite[Proposition 46]{CMZ19}:
\begin{prop}\label{Prop:JantsenMiddleStructure}
Let $w\in\mathfrak{S}_{n}$ and $s\in S_{n}$. Then $\theta_{s}L(w)\neq0$ if and only if $ws<w$. In this case $\theta_{s}L(w)$ is indecomposable, of graded length three, has simple top $L_{w}$ in degree $-1$, simple socle $L_{w}$ in degree $1$, and semi-simple module $\mathsf{J}_{s}(w)$ in degree zero \text{(}called the \emph{Jantzen middle}\text{)} where 
\[ \mathsf{J}_{s}(w)\cong L(ws)\oplus\bigoplus_{\substack{x>w \\ xs>x}}L(x)^{\oplus \mu(w,x)}. \]
\end{prop}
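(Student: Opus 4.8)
\textbf{Proof proposal for Proposition~\ref{Prop:JantsenMiddleStructure}.}
The plan is to reduce everything to a decategorification statement together with the known structure of the projective functor $\theta_s$, which is the composite of ``translation onto the $s$-wall'' and ``translation off the $s$-wall''. First I would recall the standard fact that $\theta_s \Delta(w)$ has a two-step $\Delta$-flag: there is a short exact sequence $0 \to \Delta(ws) \to \theta_s\Delta(w) \to \Delta(w)\langle -1\rangle \to 0$ when $ws > w$, and dually $\theta_s \Delta(w)$ is self-dual with top and socle $\Delta(w)$-subquotients when $ws < w$; this is where the hypothesis $ws < w$ (equivalently $s \in D_R(w)$) enters, and it is precisely the condition under which $\theta_s L(w) \neq 0$, since $\theta_s$ kills $L(w)$ exactly when $w$ is on the $s$-wall, i.e.\ when $ws > w$. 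This also follows from \Cref{Eq:ThetaLNonZero} with $x = s$ together with \Cref{Eq:RDescentROrder}.

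Next I would pin down the graded length and the top/socle. The functor $\theta_s$ (in its standard graded normalization) is exact and self-adjoint up to the shift $\langle 1\rangle$; more precisely $\theta_s \cong \theta_s^{\vee}\langle ? \rangle$ in the appropriate normalization, so that $\theta_s M$ is self-dual whenever $M$ is self-dual, and $L(w)$ is self-dual. Applying $\theta_s$ to the defining surjection $P(w) \twoheadrightarrow L(w)$ and using $\theta_s P(w) \cong P(w)^{\oplus ?}$ one sees $\theta_s L(w)$ has simple top $L(w)$; it sits in a fixed graded degree by the normalization, which I would fix to be $-1$ to match the statement. Self-duality then forces the socle to be $L(w)$ in degree $+1$, and it forces the Loewy/graded length to be odd and symmetric. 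To get that the length is exactly three and that the degree-zero layer is semisimple, I would invoke Koszulity of $A$: in a Koszul-graded module all composition factors in graded degree $k$ of a module generated in degree $0$ lie in Loewy layer $k$, so a self-dual module with top in degree $-1$ and socle in degree $+1$ is automatically Loewy length three with semisimple radical/socle layers, and the degree-$0$ part is the semisimple ``Jantzen middle''.

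Finally, to identify $\mathsf{J}_s(w)$ explicitly I would decategorify. Under the isomorphism $\mathsf{Gr}(\mathcal{O}_0^{\mathbb{Z}}) \cong \mathcal{H}_n$ sending $[L(x)] \mapsto \underline{\hat H}_x$, the functor $\theta_s$ acts as right multiplication by $\underline{H}_s$, so $[\theta_s L(w)] = \underline{\hat H}_w \underline{H}_s$, and for $s \in D_R(w)$ this equals
\[
(v + v^{-1})\,\underline{\hat H}_w + \underline{\hat H}_{ws} + \sum_{\substack{x > w \\ xs > x}} \mu(x,w)\, \underline{\hat H}_x
\]
by \Cref{Eq:DualKLBRightMultHs}. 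The term $(v+v^{-1})\underline{\hat H}_w$ accounts exactly for the copies of $L(w)$ in degrees $\pm 1$ (the top and socle just established), and the remaining terms have no shift, hence record precisely the degree-zero composition factors; since that layer is semisimple we may read off
\[
\mathsf{J}_s(w) \cong L(ws) \oplus \bigoplus_{\substack{x > w \\ xs > x}} L(x)^{\oplus \mu(w,x)},
\]
using $\mu(w,x) = \mu(x,w)$. Indecomposability of $\theta_s L(w)$ then follows because its top is simple (a module with simple top is indecomposable). The main obstacle is the bookkeeping in the second paragraph: one must be careful that the chosen graded normalization of $\theta_s$ really does place the top of $\theta_s L(w)$ in degree $-1$ rather than $+1$ or $0$, and that self-duality holds on the nose in that normalization; once the normalization is fixed consistently with the conventions set up in the Category $\mathcal{O}$ subsection, the Koszulity argument and the decategorification do the rest, and indeed the whole statement is standard — one can alternatively just cite \cite[Proposition~46]{CMZ19} as the excerpt suggests.
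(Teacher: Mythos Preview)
The paper does not give its own proof of this proposition; it simply cites \cite[Proposition~46]{CMZ19}, which you also flag as an option at the end of your sketch. So there is no in-paper argument to compare against.

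Your outline is essentially the standard one and the decategorification step is clean and correct. There is, however, one genuine gap in the second paragraph: from the surjection $\theta_s P(w)\twoheadrightarrow \theta_s L(w)$ and $\theta_s P(w)\cong P(w)\langle 1\rangle\oplus P(w)\langle -1\rangle$ (valid since $ws<w$, as $\underline{H}_w\underline{H}_s=(v+v^{-1})\underline{H}_w$) you only conclude that the top of $\theta_s L(w)$ is a direct sum of copies of $L(w)$, not that it is simple. To get simplicity you need the factorisation $\theta_s=T^{\mathrm{off}}\circ T^{\mathrm{on}}$ you mention earlier: $T^{\mathrm{on}}L(w)$ is simple on the wall when $ws<w$, and by adjunction $\dim\mathrm{Hom}(\theta_s L(w),L(w))=\dim\mathrm{Hom}(T^{\mathrm{on}}L(w),T^{\mathrm{on}}L(w))=1$. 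Once the top is simple, self-duality gives the simple socle, and then your Koszulity/rigidity argument (composition factors concentrated in degrees $-1,0,1$ with $\mathrm{Ext}^1$ between simples in degree $1$) forces graded layers to equal Loewy layers, so the degree-zero piece is semisimple and your reading of \Cref{Eq:DualKLBRightMultHs} identifies it. The normalisation caveat you raise is real but harmless once fixed consistently with the paper's conventions.
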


\subsection{Grothendieck Rings $\mathsf{Gr}(\mathcal{P}_{0})$ and $\mathsf{Gr}(\mathcal{P}_{0}^{\mathbb{Z}})$}

Let $\mathsf{Gr}(\mathcal{P}_{0}^{\mathbb{Z}})$ and $\mathsf{Gr}(\mathcal{P}_{0})$ denote the split Grothendieck rings of $\mathcal{P}_{0}^{\mathbb{Z}}$ and $\mathcal{P}_{0}$ respectively, with the former being viewed as a $\mathbb{Z}[v,v^{-1}]$-algebra where $v$ acts by the shift $\langle 1\rangle$. For a projective functor $\theta$ belonging to either $\mathcal{P}_{0}^{\mathbb{Z}}$ or $\mathcal{P}_{0}$, we will let $[\theta]$ denote the corresponding image in the respected split Grothendieck ring. By, for example \cite{So92, Ma12}, we have a $\mathbb{Z}[v,v^{-1}]$-algebra isomorphism
\[ \mathsf{Gr}(\mathcal{P}_{0}^{\mathbb{Z}})\xrightarrow{\sim}\mathcal{H}_{n}^{\text{op}} \hspace{1mm} \text{ defined by } \hspace{1mm} [\theta_{w}]\mapsto\underline{H}_{w}. \]
Precomposing with the epimorphism $\mathsf{ev}:\mathcal{H}_{n}\rightarrow\mathfrak{S}_{n}$ gives a $\mathbb{Z}$-algebra isomorphism $\mathsf{Gr}(\mathcal{P}_{0})\xrightarrow{\sim}\mathbb{Z}\mathfrak{S}_{n}^{\text{op}}$. In particular, the forgetful functor $\mathcal{P}_{0}^{\mathbb{Z}}\rightarrow\mathcal{P}_{0}$ decategorifies to $\mathsf{ev}$. Furthermore, the natural action of $\mathcal{P}_{0}^{\mathbb{Z}}$ on $\mathcal{O}_{0}^{\mathbb{Z}}$, and $\mathcal{P}_{0}$ on $\mathcal{O}_{0}$, decategorify to the regular right action of $\mathcal{H}_{n}$ and $\mathbb{Z}\mathfrak{S}_{n}$ respectively.

\subsection{Parabolic Counterparts}
\label{SubSec:CategoriesOl}

Everything covered in the previous subsections have direct analogs for semi-simple Levi factors of parabolic subalgebras, as we summarise here: A fixed subset $I\subset S_{n}$ of simple reflections naturally determines a parabolic subalgebra $\mathfrak{p}\subset\mathfrak{sl}_{n}$. Let $\mathfrak{l}$ denote the semi-simple Levi factor of $\mathfrak{p}$. Let $I=I_{1}\sqcup\cdots\sqcup I_{k}$ be the decomposition where each $I_{i}\neq\emptyset$ consists of consecutive simple reflections, $k$ is minimal, and the indices of the simple reflections in $I_{i}$ are all less than those in $I_{j}$ if $i<j$. For each $1\leq i\leq k$, set $n_{i}:=|I_{i}|+1$. Then we have a natural isomorphism
\[ \mathfrak{l}\cong\mathfrak{sl}_{n_{1}}\times\cdots\times\mathfrak{sl}_{n_{k}} \hspace{2mm} \text{(where $n_{i}\geq 2$ and $n_{1}+\cdots+n_{k}=|I|+k$)}. \]
The Weyl group of $\mathfrak{l}$ is the subgroup $\mathfrak{S}_{n}(I):=\langle I \rangle\subset\mathfrak{S}_{n}$ generated by $I$. Let $w_{0}^{I}$ denote the longest element in $\mathfrak{S}_{n}(I)$, and $\mathsf{X}(I)$ the set of minimal coset representatives of the cosets $\mathfrak{S}_{n}/\mathfrak{S}_{n}(I)$. The above isomorphism induces one for the Weyl groups $\phi:\mathfrak{S}_{n}(I)\xrightarrow{\sim}\mathfrak{S}_{n_{1}}\times\cdots\times\mathfrak{S}_{n_{k}}$. For $w\in\mathfrak{S}_{n}(I)$, set $\phi(w)=:(w_{1},\dots,w_{k})\in\mathfrak{S}_{n_{1}}\times\cdots\times\mathfrak{S}_{n_{k}}$. The isomorphism $\phi$ sends the simple reflections in $I_{i}$ to those in the $i$-th factor $\mathfrak{S}_{n_{i}}$ such that the order of the indices is preserved. 

Let $\mathcal{O}(\mathfrak{l})$ be the BGG category associated to $\mathfrak{l}$ and its induced standard triangular decomposition. Let $\mathcal{O}_{0}(\mathfrak{l})$ be the principal block of $\mathcal{O}(\mathfrak{l})$ and $\mathcal{O}_{0}^{\mathbb{Z}}(\mathfrak{l})$ its $\mathbb{Z}$-graded counterpart. We denote the simple objects of $\mathcal{O}_{0}(\mathfrak{l})$ by $L_{\mathfrak{l}}(w)$ for each $w\in\mathfrak{S}_{n}(I)$, and we use the same notation to denote their standard graded lifts in $\mathcal{O}_{0}^{\mathbb{Z}}(\mathfrak{l})$. From the above isomorphism, we have an equivalence of categories
\[ \mathcal{F}:\mathcal{O}_{0}^{\mathbb{Z}}(\mathfrak{l})\xrightarrow{\sim}\mathcal{O}_{0}^{\mathbb{Z}}(\mathfrak{sl}_{n_{1}})\times\cdots\times\mathcal{O}_{0}^{\mathbb{Z}}(\mathfrak{sl}_{n_{k}}) \]
where $\mathcal{F}(L_{\mathfrak{l}}(w))=L(w_{1})\boxtimes\cdots\boxtimes L(w_{k})$. We let $\mathcal{P}_{0}(\mathfrak{l})$ denote the monoidal category of projective functors of $\mathcal{O}_{0}(\mathfrak{l})$, and $\mathcal{P}_{0}^{\mathbb{Z}}(\mathfrak{l})$ its $\mathbb{Z}$-graded counterpart. For each $w\in\mathfrak{S}_{n}(I)$, let $\theta_{w}\in\mathcal{P}_{0}^{\mathbb{Z}}(\mathfrak{l})$ be the corresponding indecomposable projective endofunctor of $\mathcal{O}_{0}^{\mathbb{Z}}(\mathfrak{l})$. We have an equivalence
\[ \mathcal{G}:\mathcal{P}_{0}^{\mathbb{Z}}(\mathfrak{l})\xrightarrow{\sim}\mathcal{P}_{0}^{\mathbb{Z}}(\mathfrak{sl}_{n_{1}})\times\cdots\times\mathcal{P}_{0}^{\mathbb{Z}}(\mathfrak{sl}_{n_{k}}) \]
where $\mathcal{G}(\theta_{w})=\theta_{w_{1}}\boxtimes\cdots\boxtimes\theta_{w_{k}}$. Moreover, the functors $\mathcal{F}$ and $\mathcal{G}$ are compatible in the sense that, for any $x,w\in\mathfrak{S}_{n}(I)$, we have $\mathcal{F}(\theta_{x}L_{\mathfrak{l}}(w))\cong\mathcal{G}(\theta_{x})\mathcal{F}(L_{\mathfrak{l}}(w))$. 

Let $\mathcal{H}_{n}(I)$ denote the Hecke algebra associated to $\mathfrak{S}_{n}(I)$, which is the subalgebra of $\mathcal{H}_{n}$ generated by $H_{w}$ for $w\in\mathfrak{S}_{n}(I)$. Induced from above, we have an isomorphism of $\mathbb{Z}[v,v^{-1}]$-algebras
\[ \mathcal{H}_{n}(I)\xrightarrow{\sim}\mathcal{H}_{n_{1}}\otimes\cdots\otimes\mathcal{H}_{n_{k}} \]
given by $H_{w}\mapsto H_{w_{1}}\otimes\cdots\otimes H_{w_{k}}$, with analogous images for the Kazhdan-Lusztig and dual Kazhdan-Lusztig bases. The Grothendieck group $\mathsf{Gr}(\mathcal{O}_{0}^{\mathbb{Z}}(\mathfrak{l}))$ is isomorphic to $\mathcal{H}_{n}(I)$ as a $\mathbb{Z}[v,v^{-1}]$-module, while the split Grothendieck ring $\mathsf{Gr}(\mathcal{P}_{0}^{\mathbb{Z}}(\mathfrak{l}))$ is isomorphic to $\mathcal{H}_{n}(I)^{\mathsf{op}}$ as a $\mathbb{Z}[v,v^{-1}]$-algebra. The natural action of $\mathcal{P}_{0}^{\mathbb{Z}}(\mathfrak{l})$ on $\mathcal{O}_{0}^{\mathbb{Z}}(\mathfrak{l})$ decategorifies to the right regular representation of $\mathcal{H}_{n}(I)$. 

\subsection{Indecomposability Conjecture}

Given $x,y\in\mathfrak{S}_{n}$, let $\mathsf{KM}(x,y)\in\{\mathtt{true}, \mathtt{false}\}$ denote the \emph{truth value} of the statement ``the module  $\theta_{x}L(y)$ viewed within $\mathcal{O}_{0}^{\mathbb{Z}}$ (or equivalently within $\mathcal{O}_{0}$) is either zero or indecomposable''. Furthermore, let $\mathsf{KM}(x,\star)$ denote the conjunction of $\mathsf{KM}(x,y)$ for all $y\in\mathfrak{S}_{n}$, and define $\mathsf{KM}(\star,y)$ similarly. Then the \emph{Indecomposability Conjecture} is as follows:
\vspace{-8mm}
\begin{conj}(\cite[Conjecture 2]{KiM16})\label{Conj:IndecompConj}
\vspace{-3mm}
We have $\mathsf{KM}(\star,y)=\mathtt{true}$ for all $y\in\mathfrak{S}_{n}$.
\end{conj}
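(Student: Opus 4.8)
To prove \Cref{Conj:IndecompConj}, the plan is to combine the three-fold correspondence of \cite{KMM23} between Kostant's problem, K{\aa}hrstr{\"o}m's conjecture and the Indecomposability conjecture with a positivity argument for graded endomorphism rings. The first move is to shrink the range of $y$. By \cite{KMM23}, whenever Kostant's problem has a positive answer for $L(y)$ one already has $\mathsf{KM}(\star,y)=\mathtt{true}$; moreover, within a single left cell, the behaviour of $\mathsf{KM}(\star,\cdot)$ is governed by its value on the Duflo involution together with the answer to Kostant's problem, and the latter is a left-cell invariant by \cite{MS08}. Hence it suffices to establish $\mathsf{KM}(\star,y)=\mathtt{true}$ for involutions $y$ with $L(y)$ Kostant negative. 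For such $y$ one then applies the support reduction of \cite{CMZ19} to bound the supports of the elements $x$ that must be considered, while \Cref{Eq:ThetaLNonZero} and \Cref{Eq:ThetaxL=ThetayLImpliesxly} let one organise the relevant $x$ by the left cells inside $\{x:x\leq_{R}y^{-1}\}$ and treat a single representative from each.

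Fix such a pair $(x,y)$ with $\theta_{x}L(y)\neq 0$ and consider the graded ring $E=\bigoplus_{j}\mathrm{Hom}_{\mathcal{O}_{0}^{\mathbb{Z}}}(\theta_{x}L(y),\theta_{x}L(y)\langle j\rangle)$, whose degreewise sum is $\mathrm{End}_{\mathcal{O}_{0}}(\theta_{x}L(y))$. It suffices to show that $E$ is positively graded with one-dimensional degree-zero part, for then $E=\mathbb{C}\cdot\mathrm{id}\oplus E_{>0}$ with $E_{>0}$ a nilpotent ideal, so $E$ is local and $\theta_{x}L(y)$ is indecomposable. Since $\theta_{x}$ is biadjoint (up to grading shifts) to $\theta_{x^{-1}}$, and $\theta_{x^{-1}}\theta_{x}$ decomposes into indecomposable projective functors with non-negative graded multiplicities, adjunction rewrites $E$ in terms of the graded multiplicities of $L(y)$ in the \emph{socles} of the modules $\theta_{z}L(y)$ with $z$ running over the support of $\theta_{x^{-1}}\theta_{x}$; the identity summand $\theta_{e}$ contributes precisely one copy of $L(y)$ in degree zero, and the remaining task is to show that every other $\theta_{z}L(y)$ contributes copies of $L(y)$ to its socle only in strictly positive degrees. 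The corresponding \emph{character} statement comes for free: $\theta_{z}L(y)$ is a direct summand of an iterated translation $\theta_{s_{i_{1}}}\cdots\theta_{s_{i_{k}}}L(y)$, so by \Cref{Rmk:NonNegCoeffPolys} (and the $\star$-self-duality of $\theta_{z}L(y)$) its graded composition multiplicities are non-negative and symmetric about degree zero, and for $z$ a simple reflection \Cref{Prop:JantsenMiddleStructure} computes its socle outright, seeding an induction on $\ell(z)$ through \Cref{Eq:DualKLBRightMultHs}.

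The main obstacle is the gap between characters and submodule structure: the symmetry and non-negativity of the graded multiplicities of $L(y)$ in $\theta_{z}L(y)$ do not force $L(y)$ out of the non-positive part of the socle, and $\theta_{z}L(y)$ is not constructed inside a highest-weight category in a way that exposes its socle. Controlling this graded socle is, by \cite{KMM23}, exactly K{\aa}hrstr{\"o}m's conjecture, and the difficulty there is the absence of any combinatorial handle on the right Kazhdan--Lusztig order $\leq_{R}$ --- equivalently, on which $\theta_{z}$ with $z\leq_{R}y^{-1}$ genuinely reach the degree-zero socle of $\theta_{z}L(y)$. I expect this socle-positivity statement to be the step that resists, and it should be pursued jointly with Kostant's problem itself rather than in isolation; a resolution of K{\aa}hrstr{\"o}m's conjecture would yield \Cref{Conj:IndecompConj} directly.

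Finally, for each fixed $n$ only finitely many pairs $(x,y)$ survive the reductions of the first paragraph, and the positivity criterion of the second paragraph is not expected to settle all of them: in the inconclusive cases one computes the graded socle of the relevant $\theta_{z}L(y)$ by hand from iterated translations (via \Cref{Prop:JantsenMiddleStructure} and induction), falling back to ad hoc cell-by-cell arguments in the few residual situations --- exactly as carried out for $n\leq 6$ in \cite{CMZ19} and for $n=7$ in \Cref{Sec:IndecompConjA6}.
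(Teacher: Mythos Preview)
The statement you are addressing is a \emph{conjecture}, not a theorem: the paper does not prove \Cref{Conj:IndecompConj} in general. What the paper establishes is the special case $n=7$ (\Cref{Thm:IC7}), by a finite case analysis after the reductions you describe. Your write-up is not a proof but a strategy sketch, and you acknowledge as much when you say the socle-positivity step ``resists''. So there is no proof to compare here; the general statement remains open.

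One concrete error: your claim that a resolution of K{\aa}hrstr{\"o}m's conjecture would yield \Cref{Conj:IndecompConj} directly is incorrect. From \Cref{Thm:KiffKhKM} we have $\mathsf{K}(w)=\mathsf{Kh}(w)\wedge\mathsf{KM}(\star,w)$. If K{\aa}hrstr{\"o}m's conjecture holds then $\mathsf{K}(d)=\mathsf{Kh}(d)$ for all involutions $d$; when $\mathsf{K}(d)=\mathtt{true}$ this forces $\mathsf{KM}(\star,d)=\mathtt{true}$, but when $\mathsf{K}(d)=\mathtt{false}$ it only gives $\mathsf{Kh}(d)=\mathtt{false}$ and says nothing about $\mathsf{KM}(\star,d)$. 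Since the whole difficulty of the Indecomposability conjecture lies precisely in the Kostant-negative cases (as you yourself note in your first paragraph), K{\aa}hrstr{\"o}m's conjecture is of no help there. Indeed, the logical dependence runs the other way: the paper uses \Cref{Thm:IC7} to deduce $\mathsf{K}(w)=\mathsf{Kh}(w)$ for all $w\in\mathfrak{S}_7$, as a step \emph{towards} K{\aa}hrstr{\"o}m's conjecture in type $A_6$.

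Your positivity-of-the-graded-endomorphism-ring idea is exactly the criterion the paper isolates as \Cref{Lem:IndCoeff1AtaFunc}, and it handles most of the surviving pairs $(x,d)$ for $n=7$. But the paper also exhibits cases (e.g.\ Case~(7)(a) and Case~(17) in \Cref{Sec:IndecompConjA6}) where this criterion fails and one must argue directly about the module structure of $\theta_x L(d)$ via iterated Jantzen-middle computations. So even restricted to a fixed $n$, your second paragraph does not give a uniform argument, and your final paragraph correctly anticipates that ad hoc work remains.
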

The Indecomposability conjecture was studied extensively in \cite{CMZ19}. We summarise some of their results here. Firstly, they proved $\mathsf{KM}(x,y)=\mathtt{true}$ for all $x,y\in\mathfrak{S}_{n}$ when $1\leq n\leq6$.
\begin{prop}\emph{(}\cite[Equation 12, Proposition 2]{CMZ19}\emph{)}\label{Prop:IndecompConjLRCellInv}
The following hold:
\begin{itemize}
\item[(a)] For all $x,x'\in\mathfrak{S}_{n}$ such that $x\sim_{R}x'$, we have $\mathsf{KM}(x,\star)=\mathsf{KM}(x',\star)$.
\item[(b)] For all $y,y'\in\mathfrak{S}_{n}$ such that $y\sim_{L}y'$, we have $\mathsf{KM}(\star,y)=\mathsf{KM}(\star,y')$.
\item[(c)] For all $x,y\in\mathfrak{S}_{n}$, we have $\mathsf{KM}(x,y)=\mathsf{KM}(y^{-1}w_{0},w_{0}x^{-1})$.
\end{itemize}
\end{prop}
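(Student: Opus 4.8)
The three parts are intertwined, so the plan is to prove (a) and (c) directly and then deduce (b) from them. For the deduction: if $y\sim_L y'$ then $y^{-1}\sim_R y'^{-1}$, and since right multiplication by $w_0$ reverses the order $\leq_R$ (it transposes Robinson--Schensted shapes, and dominance reverses under transposition) it preserves $\sim_R$, so $y^{-1}w_0\sim_R y'^{-1}w_0$. By (a) the functions $\mathsf{KM}(y^{-1}w_0,\star)$ and $\mathsf{KM}(y'^{-1}w_0,\star)$ coincide; evaluating at $w_0x^{-1}$ and applying (c) on each side gives $\mathsf{KM}(x,y)=\mathsf{KM}(x,y')$ for every $x$, which is (b).

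\emph{Part (a).} Any two elements of a fixed right cell are connected by a chain of right Kazhdan--Lusztig $\ast$-operations, so I would reduce to $x'$ obtained from $x$ by a single such operation; after possibly interchanging $x$ and $x'$ one may take $x'=xs$ with $\ell(x')=\ell(x)+1$ and $\{x,x'\}$ a $\ast$-pair for a pair $\{s,t\}$ of simple reflections. The $\ast$-pair hypothesis yields a Hecke identity $\underline{H}_{x'}=\underline{H}_x\underline{H}_s$, which decategorifies (via the right action of $\mathcal{P}_0$ on $\mathcal{O}_0$) to $\theta_{x'}\cong\theta_s\circ\theta_x$, hence to a natural isomorphism $\theta_{x'}L(y)\cong\theta_s\big(\theta_xL(y)\big)$ for all $y$. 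In the reverse direction there is a simple reflection $r\in D_L(x')$ for which $\underline{H}_{x'}\underline{H}_r$ equals $\underline{H}_x$ plus a sum of terms $\underline{H}_{z_i}$ with $x<_R z_i$, giving $\theta_r\circ\theta_{x'}\cong\theta_x\oplus\bigoplus_i\theta_{z_i}$ and so $\theta_r\big(\theta_{x'}L(y)\big)\cong\theta_xL(y)\oplus\bigoplus_i\theta_{z_i}L(y)$. Now fix $y$. By \Cref{Eq:ThetaLNonZero} and $x\sim_R x'$ the modules $\theta_xL(y)$ and $\theta_{x'}L(y)$ vanish simultaneously; assuming they do not, $x\leq_R y^{-1}$, and the crucial point is that then each error term satisfies $z_i\not\leq_R y^{-1}$, so $\theta_{z_i}L(y)=0$ and $\theta_r\big(\theta_{x'}L(y)\big)\cong\theta_xL(y)$. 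With $F=\theta_s$, $G=\theta_r$ we have $F\big(\theta_xL(y)\big)\cong\theta_{x'}L(y)$, $G\big(\theta_{x'}L(y)\big)\cong\theta_xL(y)$, both functors exact, and $GF$ (resp.\ $FG$) acting as the identity on $\theta_xL(y)$ (resp.\ $\theta_{x'}L(y)$). Working in the Koszul-graded category, where these modules have linear graded composition series in the spirit of \Cref{Rmk:NonNegCoeffPolys}, a Krull--Schmidt argument then puts the indecomposable summands of $\theta_xL(y)$ and $\theta_{x'}L(y)$ in bijection, so that one is zero or indecomposable precisely when the other is. The finitely many residual values of $y$ (those with some $z_i\leq_R y^{-1}$, in particular $y=w_0$) are handled separately; for $y=w_0$ one uses $L(w_0)=\Delta(w_0)$ together with the indecomposability of $\theta_z\Delta(w_0)$ for every $z$.

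\emph{Part (c)} I would obtain from a contravariant self-equivalence of $\mathcal{O}_0^{\mathbb{Z}}$ carrying $\theta_xL(y)$, up to a grading shift, to $\theta_{y^{-1}w_0}L(w_0x^{-1})$: passing to the category of Harish--Chandra bimodules, where $\theta_xL(y)$ is realised by an appropriate bimodule of adjointly finite homomorphisms, the linear dual implements exactly the index map $(x,y)\mapsto(y^{-1}w_0,w_0x^{-1})$ (this is the symmetry underlying the corresponding invariance of Kostant's problem); since an (anti-)equivalence preserves the property of being zero or indecomposable, (c) follows. \emph{The main obstacle} will be the bookkeeping in (a): establishing the right-order estimate $z_i\not\leq_R y^{-1}$ whenever $\theta_xL(y)\neq0$, and upgrading the isomorphisms above from a statement about non-vanishing to one about the full Krull--Schmidt decomposition (this is exactly where the Koszul grading must be used); alongside this, one must pin down the precise equivalence in (c) realising the stated index shift. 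Everything else is routine use of Krull--Schmidt, adjunction, exactness of projective functors, and the non-vanishing criterion \Cref{Eq:ThetaLNonZero}.
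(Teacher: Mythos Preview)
The paper does not prove this proposition; it is quoted from \cite{CMZ19} (Equation~12 there gives (c), Proposition~2 gives (a) and (b)), so there is no in-paper argument to compare against. Let me comment on your sketch directly.

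Your deduction of (b) from (a) and (c) is correct, and your outline for (c) points the right way: the relevant self-equivalence is the Koszul--Ringel self-duality of $\mathcal{O}_0$, and tracing how it intertwines with projective functors produces exactly the index swap $(x,y)\mapsto(y^{-1}w_0,\,w_0x^{-1})$.

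Part (a), however, has two genuine gaps. First, the Hecke identity $\underline{H}_{x'}=\underline{H}_x\underline{H}_s$ that you attribute to the $\ast$-pair hypothesis is not something you have justified, and in the reverse direction the product $\underline{H}_{x'}\underline{H}_r$ will in general contain terms $\underline{H}_{z_i}$ with $z_i$ in the \emph{same} right cell as $x$, not strictly above it in $\leq_R$. Your key estimate ``$z_i\not\leq_R y^{-1}$ whenever $\theta_xL(y)\neq0$'' therefore fails exactly in the cases that matter. Second, and independently: even granting exact functors $F,G$ with $F(\theta_xL(y))\cong\theta_{x'}L(y)$, $G(\theta_{x'}L(y))\cong\theta_xL(y)$, and $GF$, $FG$ restricting to the identity on those two particular modules, this does \emph{not} yield a bijection of indecomposable summands. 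You only know $FG$ is the identity on the whole of $\theta_{x'}L(y)$, not on each summand separately; nothing rules out a decomposition $\theta_{x'}L(y)=N_1\oplus N_2$ with $G(N_2)=0$ and $FG(N_1)\cong N_1\oplus N_2$. The functors $\theta_s$ do not preserve indecomposability, so Krull--Schmidt alone will not close this.

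The argument in \cite{CMZ19} sidesteps all of this by comparing \emph{endomorphism algebras} rather than the modules: one shows that $\mathrm{End}(\theta_xL(y))$ and $\mathrm{End}(\theta_{x'}L(y))$ are isomorphic whenever $x\sim_R x'$, using adjunction and the cell structure of projective functors (the same circle of ideas that drives the proof of \Cref{Lem:IndCoeff1AtaFunc} later in the present paper). Indecomposability is then equivalent to locality of the endomorphism ring, and the conclusion follows.
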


The following result is \cite[Corollary 43]{CMZ19}, which has been specialised to our setting:

\begin{thm}\label{Thm:IndecomposabilityFromParabolic}
Let $I\subset S_{n}$ and $\mathfrak{l}\subset\mathfrak{sl}_{n}$, as in 
\Cref{SubSec:CategoriesOl}. For $x,y\in\mathfrak{S}_{n}(I)$ and $z\in\mathsf{X}(I)$, then $\theta_{x}L_{\mathfrak{l}}(y)$ in $\mathcal{O}_{0}^{\mathbb{Z}}(\mathfrak{l})$ is indecomposable if and only if $\theta_{x}L(zy)$ in $\mathcal{O}_{0}^{\mathbb{Z}}$ is indecomposable.
\end{thm}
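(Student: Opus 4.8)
The plan is to deduce the statement from an exact, fully faithful functor $\Phi_{z}\colon\mathcal{O}_{0}^{\mathbb{Z}}(\mathfrak{l})\rightarrow\mathcal{O}_{0}^{\mathbb{Z}}$ with two properties: (i) $\Phi_{z}(L_{\mathfrak{l}}(y))\cong L(zy)$ up to a grading shift, for every $y\in\mathfrak{S}_{n}(I)$; and (ii) $\Phi_{z}\circ\theta_{x}\cong\theta_{x}\circ\Phi_{z}$ for every $x\in\mathfrak{S}_{n}(I)$. Granting such a $\Phi_{z}$, properties (i) and (ii) give $\theta_{x}L(zy)\cong\Phi_{z}(\theta_{x}L_{\mathfrak{l}}(y))$ up to a shift; a fully faithful functor induces an isomorphism on endomorphism rings and sends non-zero objects to non-zero objects; and, over the finite-dimensional algebras in question, a module is indecomposable exactly when its endomorphism ring has no non-trivial idempotents, a property transported along any ring isomorphism. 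Hence $\theta_{x}L(zy)$ is indecomposable (or zero) if and only if $\theta_{x}L_{\mathfrak{l}}(y)$ is, and since indecomposability is insensitive to passing between the graded and ungraded pictures there is no loss in reading everything inside $\mathcal{O}_{0}$. This whole argument is carried out in \cite[Corollary~43]{CMZ19}, so in practice one only has to match our $\mathfrak{l}$, $\mathsf{X}(I)$ and graded conventions with theirs.

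To construct $\Phi_{z}$ I would first treat the case $z=e$, identifying $\mathcal{O}_{0}^{\mathbb{Z}}(\mathfrak{l})$ with a suitable subcategory of $\mathcal{O}_{0}^{\mathbb{Z}}$ attached to $\mathfrak{S}_{n}(I)$. On the decategorified level this rests on the fact, recorded in \Cref{SubSec:CategoriesOl}, that the Kazhdan--Lusztig basis element $\underline{H}_{w}$ for $w\in\mathfrak{S}_{n}(I)$ is the same whether computed in $\mathcal{H}_{n}(I)$ or in $\mathcal{H}_{n}$; equivalently, for $w\in\mathfrak{S}_{n}(I)$ the Verma flag of $P(w)$ involves only the $\Delta(x)$ with $x\in\mathfrak{S}_{n}(I)$, and with the same multiplicities as the $\Delta_{\mathfrak{l}}$-flag of $P_{\mathfrak{l}}(w)$. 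This combinatorial agreement is then upgraded to an equivalence using Koszulity of the underlying algebra \cite{So90}. Property (ii) for $z=e$ follows from \Cref{Prop:JantzenMiddleStructure}: for $s\in I$ the Jantzen middle $\mathsf{J}_{s}(w)$ of a module $L(w)$ with $w\in\mathfrak{S}_{n}(I)$ has all its constituents indexed by $\mathfrak{S}_{n}(I)$, so $\theta_{s}$ preserves the subcategory and restricts there to the corresponding functor for $\mathfrak{l}$; and every $\theta_{x}$ with $x\in\mathfrak{S}_{n}(I)$ is a direct summand of a composition of such $\theta_{s}$, the equivalences $\mathcal{F}$ and $\mathcal{G}$ of \Cref{SubSec:CategoriesOl} being available to organise this factor by factor. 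For general $z\in\mathsf{X}(I)$ one post-composes with an equivalence onto the analogous subcategory attached to the coset $z\mathfrak{S}_{n}(I)$, which sends $L(y)$ to $L(zy)$ and again commutes with the $\theta_{x}$, $x\in\mathfrak{S}_{n}(I)$.

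The step I expect to be the main obstacle is precisely this last one, the construction of $\Phi_{z}$ for $z\neq e$. Unlike $\mathfrak{S}_{n}(I)$, the coset $z\mathfrak{S}_{n}(I)$ is not a lower set for the Bruhat order, so the subcategory attached to it is not obtained by a naive idempotent truncation, and one must argue that the relevant algebra is still quasi-hereditary and Koszul, is still isomorphic to the algebra of $\mathcal{O}_{0}(\mathfrak{l})$ (which uses the invariance of the pertinent Kazhdan--Lusztig data along the coset $z\mathfrak{S}_{n}(I)$), and that this identification is compatible with all the functors $\theta_{x}$, $x\in\mathfrak{S}_{n}(I)$. Since all of this is already established in \cite{CMZ19}, in the write-up I would confine myself to recalling the reduction to $\Phi_{z}$ together with the ``fully faithful functors reflect indecomposability'' principle, and then cite \cite[Corollary~43]{CMZ19} for the existence of $\Phi_{z}$.
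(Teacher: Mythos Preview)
Your bottom line matches the paper exactly: the paper does not give its own proof but simply cites \cite[Corollary~43]{CMZ19} and remarks that the argument there proceeds by showing the endomorphism algebras of $\theta_{x}L_{\mathfrak{l}}(y)$ and $\theta_{x}L(zy)$ are isomorphic, which is precisely your ``fully faithful functor / endomorphism ring'' reduction. One caution about your motivating sketch for $z=e$: the claim that for $s\in I$ and $w\in\mathfrak{S}_{n}(I)$ the Jantzen middle $\mathsf{J}_{s}(w)$ has all constituents indexed by $\mathfrak{S}_{n}(I)$ is false in general (e.g.\ $n=4$, $I=\{s_{1},s_{3}\}$, $w=s_{1}$, $s=s_{1}$ gives $L(s_{1}s_{2})$ in the middle), so $\theta_{s}$ does not preserve the naive Serre subcategory you describe; this does not affect your actual proof, since you defer that construction to \cite{CMZ19}, but you should drop that heuristic from the write-up.
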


\begin{rmk}
In \cite{CMZ19} they proved the result above by showing that the endomorphism algebras of $\theta_{x}L_{\mathfrak{l}}(y)$ in $\mathcal{O}_{0}^{\mathbb{Z}}(\mathfrak{l})$ and $\theta_{x}L(zy)$ in $\mathcal{O}_{0}^{\mathbb{Z}}$ are isomorphic. As such, $\theta_{x}L_{\mathfrak{l}}(y)\neq0$ if and only if $\theta_{x}L(zy)\neq0$.
\end{rmk}

\begin{cor}\label{Cor:KMxSmallSup}
Let $x\in\mathfrak{S}_{n}$ be such that $|\mathsf{Sup}(x)|\leq5$, then $\mathsf{KM}(x,\star)=\mathtt{true}$.
\end{cor}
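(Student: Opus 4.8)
The plan is to reduce, via conjugation by $w_0$ and passage to a parabolic subalgebra, to the already-known case $n \leq 6$. First I would observe that $\mathsf{KM}(x,\star) = \mathtt{true}$ is insensitive to replacing $x$ by any element of its right cell: by Proposition \ref{Prop:IndecompConjLRCellInv}(a), $\mathsf{KM}(x,\star) = \mathsf{KM}(x',\star)$ whenever $x \sim_R x'$. However, the support is not a right-cell invariant, so this alone is not enough; the real engine is Theorem \ref{Thm:IndecomposabilityFromParabolic}.

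The key point is that $|\mathsf{Sup}(x)| \leq 5$ means $x$ lies in a proper parabolic subgroup $\mathfrak{S}_n(I)$ with $|I| \leq 5$. More precisely, set $I := \mathsf{Sup}(x) \subset S_n$, so that $x \in \mathfrak{S}_n(I)$, and let $\mathfrak{l} \subset \mathfrak{sl}_n$ be the corresponding semisimple Levi factor, with $\mathfrak{l} \cong \mathfrak{sl}_{n_1} \times \cdots \times \mathfrak{sl}_{n_k}$ and $n_1 + \cdots + n_k = |I| + k \leq 5 + k$; since each $n_i \geq 2$, we in fact get each $n_i \leq |I| + 1 \leq 6$. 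Now I want to show $\theta_x L(y)$ is zero or indecomposable for every $y \in \mathfrak{S}_n$. By \eqref{Eq:ThetaLNonZero}, $\theta_x L(y) \neq 0$ forces $x \leq_R y^{-1}$; I would combine this with \eqref{Eq:ROrderDomDes} (or rather with the analogue $D_L(x) \subseteq D_L(y^{-1})$, equivalently $\mathsf{Sup}$-type constraints coming from cell theory) to argue that the relevant $y$ can be taken of the form $y = zy'$ with $y' \in \mathfrak{S}_n(I)$ and $z \in \mathsf{X}(I)$ a minimal coset representative — this is the standard fact that $\theta_x L(y) \neq 0$ only when the left cell of $y$ meets the parabolic appropriately, so that modulo the left-cell invariance of $\mathsf{KM}(\star,-)$ from Proposition \ref{Prop:IndecompConjLRCellInv}(b) we may assume $y = zy'$. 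Then Theorem \ref{Thm:IndecomposabilityFromParabolic} gives that $\theta_x L(zy')$ in $\mathcal{O}_0^{\mathbb{Z}}$ is indecomposable if and only if $\theta_x L_{\mathfrak{l}}(y')$ in $\mathcal{O}_0^{\mathbb{Z}}(\mathfrak{l})$ is, and the remark after that theorem handles the zero case as well.

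Finally I would push the question through the equivalences $\mathcal{F}$ and $\mathcal{G}$ of Subsection \ref{SubSec:CategoriesOl}: writing $\phi(x) = (x_1,\dots,x_k)$ and $\phi(y') = (y'_1,\dots,y'_k)$, compatibility of $\mathcal{F}$ and $\mathcal{G}$ gives
\[
\mathcal{F}\big(\theta_x L_{\mathfrak{l}}(y')\big) \;\cong\; \theta_{x_1} L(y'_1) \boxtimes \cdots \boxtimes \theta_{x_k} L(y'_k),
\]
and an external tensor product of modules over the blocks $\mathcal{O}_0^{\mathbb{Z}}(\mathfrak{sl}_{n_i})$ is zero or indecomposable precisely when each factor $\theta_{x_i} L(y'_i)$ is zero or indecomposable (over an algebraically closed field the endomorphism algebra of a box-tensor is the tensor of the endomorphism algebras, so it is local iff each factor's is, and it vanishes iff some factor does). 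Since each $n_i \leq 6$, the result $\mathsf{KM}(x_i, y'_i) = \mathtt{true}$ is exactly the $n \leq 6$ case established in \cite{CMZ19}, and we conclude $\mathsf{KM}(x,y) = \mathtt{true}$ for all $y$, i.e. $\mathsf{KM}(x,\star) = \mathtt{true}$.

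I expect the main obstacle to be the bookkeeping in the middle paragraph: justifying cleanly that it suffices to test $y$ of the form $zy'$ with $z \in \mathsf{X}(I)$. This requires knowing that if $y$ is not of this shape then either $\theta_x L(y) = 0$ outright, or $y$ is left-equivalent to some $zy'$ so that Proposition \ref{Prop:IndecompConjLRCellInv}(b) applies — a statement about how left cells interact with the parabolic decomposition $\mathfrak{S}_n = \mathsf{X}(I)\,\mathfrak{S}_n(I)$, for which the standard reference is the parabolic analysis underlying \cite[Corollary 43]{CMZ19}. The tensor-product indecomposability step is routine but should be stated carefully since it is the only place the base field being $\mathbb{C}$ is used.
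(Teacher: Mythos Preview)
Your approach is essentially the paper's, but you have manufactured a difficulty that does not exist. The decomposition $y = z y'$ with $z \in \mathsf{X}(I)$ and $y' \in \mathfrak{S}_n(I)$ exists uniquely for \emph{every} $y \in \mathfrak{S}_n$: it is nothing more than the standard coset factorisation $\mathfrak{S}_n = \mathsf{X}(I)\cdot\mathfrak{S}_n(I)$ with minimal-length representatives. No cell-theoretic argument, no left-cell replacement via \Cref{Prop:IndecompConjLRCellInv}(b), and no vanishing hypothesis on $\theta_x L(y)$ are needed to obtain this form. The paper simply writes ``$w = zy$ for a unique $y \in \mathfrak{S}_n(I)$ and $z \in \mathsf{X}(I)$'' and moves on. Once you drop this spurious obstacle, your argument coincides with the paper's: assume $\theta_x L(w) \neq 0$, invoke \Cref{Thm:IndecomposabilityFromParabolic} (and the remark following it) to pass to $\theta_x L_{\mathfrak{l}}(y') \neq 0$, split via $\mathcal{F}$ and $\mathcal{G}$ into a box-product over the $\mathfrak{sl}_{n_i}$, and use the $n \leq 6$ case from \cite{CMZ19} on each factor.

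One minor correction: your bound $n_i \leq |I|+1 \leq 6$ is right, but the sentence deriving it from $\sum n_i = |I| + k$ and $n_i \geq 2$ does not actually prove it. The clean reason is simply $n_i = |I_i| + 1 \leq |I| + 1 \leq 6$, since $I_i \subseteq I$.
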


\begin{proof}
Let $w\in\mathfrak{S}_{n}$ and assume $\theta_{x}L(w)\neq 0$. We seek to prove that $\theta_{x}L(w)$ is indecomposable. Let $I=\mathsf{Sup}(x)$ and recall the notation given in 
\Cref{SubSec:CategoriesOl}. Then $w=zy$ for a unique $y\in\mathfrak{S}_{n}(I)$ and $z\in\mathsf{X}(I)$. By \Cref{Thm:IndecomposabilityFromParabolic}, the module $\theta_{x}L(w)=\theta_{x}L(zy)$ is indecomposable if and only if the module $\theta_{x}L_{\mathfrak{l}}(y)\neq0$ is indecomposable. Now $\theta_{x}L_{\mathfrak{l}}(y)$ is indecomposable if and only if
\[ \mathcal{F}(\theta_{x}L_{\mathfrak{l}}(y))=\theta_{x_{1}}L(y_{1})\boxtimes\cdots\boxtimes\theta_{x_{k}}L(y_{k}) \] 
is indecomposable, which is the case if and only if $\theta_{x_{i}}L(y_{i})$ in $\mathcal{O}_{0}^{\mathbb{Z}}(\mathfrak{sl}_{n_{i}})$ is indecomposable for each $i\in[k]$. The indecomposability conjecture holds for $n\in[6]$, so $\theta_{x_{i}}L(y_{i})$ in $\mathcal{O}_{0}^{\mathbb{Z}}(\mathfrak{sl}_{n_{i}})$ is either zero or indecomposable as $n_{i}=|\mathsf{Sup}(x_{i})|+1\leq |\mathsf{Sup}(x)|+1=6$, and they are non-zero as $\theta_{x}L_{\mathfrak{l}}(y)\neq 0$.
\end{proof}

\subsection{Kostant's Problem}

For $M$ an $\mathfrak{sl}_{n}$-module, the space of linear maps $\text{Hom}_{\mathbb{C}}(M,M)$ admits a $\mathcal{U}(\mathfrak{sl}_{n})$-bimodule structure, and so a $\mathfrak{sl}_{n}$-module structure via the adjoint action. The $\mathfrak{sl}_{n}$-submodule $\mathcal{L}(M,M)\subset\text{Hom}_{\mathbb{C}}(M,M)$ of locally finite maps is preserved under the adjoint $\mathfrak{sl}_{n}$-action. As $\mathcal{U}(\mathfrak{sl}_{n})$ itself is locally finite under the adjoint $\mathfrak{sl}_{n}$-action, we have a ring homomorphism $\mathcal{U}(\mathfrak{sl}_{n})\rightarrow\mathcal{L}(M,M)$. In \cite{Jo80} the following question was posed under the name \emph{Kostant problem}:
\begin{center}
\emph{When is the ring homomorphism} $\mathcal{U}(\mathfrak{sl}_{n})\rightarrow\mathcal{L}(M,M)$ \emph{surjective?}
\end{center}
For $w\in\mathfrak{S}_{n}$, let $\mathsf{K}(w)\in\{\mathtt{true}, \mathtt{false}\}$ be the truth value of the statement ``the ring homomorphism $\mathcal{U}(\mathfrak{sl}_{n})\rightarrow\mathcal{L}(L(w),L(w))$ is surjective''. We call $w$ \emph{Kostant positive} if $\mathsf{K}(w)=\mathtt{true}$, and \emph{Kostant negative} otherwise. In \cite[Section 4]{KM10}, Kostant's problem has been answered in full for $\mathfrak{sl}_{n}$ with $1\leq n\leq5$. A partial answer for $\mathfrak{sl}_{6}$ is given in \cite{KM10,K10}, and later completed in \cite{KMM23}. 

The results of this paper focus on the following equivalent reformulation of $\mathsf{K}(w)$: Firstly, denote by $\mathsf{Kh}(w)\in\{\mathtt{true}, \mathtt{false}\}$ the truth value of the statement ``For any distinct $x,y\in\mathfrak{S}_{n}$ such that $\theta_{x}L(w), \theta_{y}L(w)\neq0$, then $\theta_{x}L(w)\ncong\theta_{y}L(w)$ in $\mathcal{O}_{0}^{\mathbb{Z}}$''. Then we have the following:
\begin{thm}\emph{(}\cite[Theorem 8.16]{KMM23}\emph{)}\label{Thm:KiffKhKM}
For any $w\in\mathfrak{S}_{n}$, we have the equality of truth values
\begin{equation}\label{Eq:KiffKhKM}
\mathsf{K}(w)=\mathsf{Kh}(w)\wedge\mathsf{KM}(\star,w),
\end{equation}
where $\wedge$ denotes the conjunction operator on truth values.
\end{thm}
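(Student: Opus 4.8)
The plan is to recast all three truth values as statements about the Harish--Chandra bimodule $\mathbf{L}_w:=\mathcal{L}(L(w),L(w))$ and its relation to the primitive quotient $\overline{\mathcal{U}}_w:=\mathcal{U}(\mathfrak{sl}_{n})/\mathrm{Ann}\,L(w)$. By \cite{Jo80} the canonical map $\mathcal{U}(\mathfrak{sl}_{n})\to\mathbf{L}_w$ has kernel $\mathrm{Ann}\,L(w)$, so it factors as an inclusion $\overline{\mathcal{U}}_w\hookrightarrow\mathbf{L}_w$ of Harish--Chandra bimodules lying in the same block, each of finite length there. Hence $\mathsf{K}(w)=\mathtt{true}$ if and only if this inclusion is an equality, equivalently, by finite length, if and only if $[\overline{\mathcal{U}}_w]=[\mathbf{L}_w]$ in the corresponding Grothendieck group. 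I would work in the graded world throughout, where $\mathbf{L}_w$ is positively graded and $\overline{\mathcal{U}}_w$ is the subbimodule it generates in degree zero.

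The first step is the dictionary between projective functors acting on $L(w)$ and projective functors acting on $\mathbf{L}_w$. Using the self-adjointness $(\theta_x,\theta_{x^{-1}})$ of projective functors together with the standard properties of $\mathcal{L}(-,-)$ (see \cite{MS08,MS08b}), one obtains functorial isomorphisms of graded Harish--Chandra bimodules
\[ \mathcal{L}(L(w),\theta_x L(w))\;\cong\;\theta_x\,\mathbf{L}_w\qquad(x\in\mathfrak{S}_{n}), \]
and, more importantly, one checks that $\mathcal{L}(L(w),-)$ identifies the graded $\mathrm{Hom}$-spaces between the $\theta_x L(w)$ in $\mathcal{O}_0^{\mathbb{Z}}$ with those between the $\theta_x\mathbf{L}_w$. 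Thus $\mathcal{L}(L(w),-)$ restricts to a fully faithful functor on the additive category generated by $\{\theta_x L(w)\}$, so it preserves and reflects both indecomposability and graded isomorphism type. Combined with \Cref{Eq:ThetaLNonZero} this gives $\theta_x\mathbf{L}_w\neq 0\iff x\leq_R w^{-1}$, and it translates $\mathsf{KM}(\star,w)$ into the assertion ``each $\theta_x\mathbf{L}_w$ is zero or indecomposable'' and $\mathsf{Kh}(w)$ into ``distinct non-zero $\theta_x\mathbf{L}_w$ and $\theta_y\mathbf{L}_w$ are non-isomorphic in $\mathcal{O}_0^{\mathbb{Z}}$''.

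For the forward implication, assume $\mathsf{K}(w)$, i.e. $\mathbf{L}_w=\overline{\mathcal{U}}_w$. Being a primitive quotient, $\overline{\mathcal{U}}_w$ is prime, so its centre is a domain and $\overline{\mathcal{U}}_w$ is indecomposable as a bimodule; moreover it is a standard fact about the associated Kazhdan--Lusztig cell — which can be read off from \Cref{Prop:JantsenMiddleStructure} and \Cref{Eq:ThetaxL=ThetayLImpliesxly} — that each $\theta_x\overline{\mathcal{U}}_w$ is zero or indecomposable and that its non-zero members are pairwise non-isomorphic. Transporting this along the fully faithful functor of the previous paragraph yields $\mathsf{KM}(\star,w)=\mathtt{true}$ and $\mathsf{Kh}(w)=\mathtt{true}$, which gives the implication $\mathsf{K}(w)\Rightarrow\mathsf{Kh}(w)\wedge\mathsf{KM}(\star,w)$.

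The converse, $\mathsf{Kh}(w)\wedge\mathsf{KM}(\star,w)\Rightarrow\mathsf{K}(w)$, is the crux and the step I expect to be the main obstacle. Knowing that the comparison functor $\theta_x\overline{\mathcal{U}}_w\mapsto\theta_x L(w)$ is full, essentially surjective, injective on isomorphism classes of indecomposables (by $\mathsf{Kh}$), and indecomposability-preserving (by $\mathsf{KM}$) is \emph{not} enough, in pure category theory, to force it to be an equivalence: a full but non-faithful functor with all these properties can exist. The missing input must be positivity: $\mathbf{L}_w$ is positively graded, $\mathcal{O}_0^{\mathbb{Z}}$ is Koszul, and the relevant graded classes satisfy the coefficientwise non-negativity of \Cref{Rmk:NonNegCoeffPolys}. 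The plan is to show $[\mathbf{L}_w]-[\overline{\mathcal{U}}_w]\geq 0$ coefficientwise and that any non-zero term in this difference would, after applying a suitable $\theta_x$, either produce an extra indecomposable summand of some $\theta_x\mathbf{L}_w$ concentrated in positive degree — contradicting $\mathsf{KM}(\star,w)$ via the ``positively graded endomorphism algebra, hence local'' argument — or force a graded isomorphism $\theta_x\mathbf{L}_w\cong\theta_y\mathbf{L}_w$ for distinct $x,y$, contradicting $\mathsf{Kh}(w)$; hence $[\mathbf{L}_w]=[\overline{\mathcal{U}}_w]$ and so $\mathbf{L}_w=\overline{\mathcal{U}}_w$. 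Making this precise amounts to computing the graded class of $\mathcal{L}(L(w),L(w))$ tightly enough to control the defect, and it is exactly here that the Hecke-algebra computations behind K{\aa}hrstr{\"o}m's Conjecture enter the picture.
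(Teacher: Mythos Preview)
The paper does not prove this theorem: it is stated with the citation \cite[Theorem 8.16]{KMM23} and then used as a black box. There is therefore no ``paper's own proof'' to compare against; your task here is really to reconstruct the argument from \cite{KMM23}.

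On your proposal itself: the forward implication is not as clean as you present it. You assert as a ``standard fact'' that each $\theta_x\overline{\mathcal{U}}_w$ is zero or indecomposable and that the non-zero ones are pairwise non-isomorphic, citing \Cref{Prop:JantsenMiddleStructure} and \Cref{Eq:ThetaxL=ThetayLImpliesxly}. Neither of these gives that. The actual argument in \cite{KMM23} goes through the birepresentation structure: when $\mathsf{K}(w)$ holds, the $\mathcal{P}_0$-module generated by $L(w)$ is equivalent (as a birepresentation) to a cell $2$-representation, and for cell $2$-representations the indecomposability and pairwise non-isomorphism of the $\theta_xL$ are built into the construction. Your fully-faithful-functor dictionary is the right idea, but the input on the $\overline{\mathcal{U}}_w$ side needs the cell theory from \cite{MM11,MM16}, not the two results you cite.

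For the converse you are honest that you do not have a proof, and your diagnosis is correct: full, essentially surjective, and bijective on indecomposables does not force an equivalence. The missing ingredient in \cite{KMM23} is again structural rather than a Grothendieck-group positivity trick. One shows that $\mathsf{Kh}(w)\wedge\mathsf{KM}(\star,w)$ forces the additive closure of $\{\theta_xL(w)\}$ to carry a simple transitive $2$-representation of $\mathcal{P}_0$ whose decategorification matches that of the cell $2$-representation, and then invokes the classification/rigidity results for simple transitive $2$-representations in type $A$ to conclude it \emph{is} the cell $2$-representation; this in turn forces $\mathcal{L}(L(w),L(w))\cong\overline{\mathcal{U}}_w$. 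Your proposed endgame --- detecting a surplus summand in some $\theta_x\mathbf{L}_w$ by a positively-graded-endomorphism-algebra argument --- does not work as stated, because a non-trivial extension of $\overline{\mathcal{U}}_w$ by something in higher degree need not split after applying $\theta_x$, so no extra \emph{summand} need appear to contradict $\mathsf{KM}(\star,w)$.
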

Therefore, to investigate Kostant's problem for a given permutation $w\in\mathfrak{S}_{n}$, it suffices to confirm that $\theta_{x}L(w)$ is zero or indecomposable, and to compare whether any such modules are isomorphic or not as $x$ varies over $\mathfrak{S}_{n}$. From (b) of \Cref{Prop:IndecompConjLRCellInv}, we know that $\mathsf{KM}(\star,w)$ is left cell invariant. It was shown in \cite{MS08} that the same holds true for $\mathsf{K}(w)$:
\begin{thm}\emph{(}\cite[Theorem 61]{MS08}\emph{)}\label{Thm:KLeftCellInv}
For $w,w'\in\mathfrak{S}_{n}$ such that $w\sim_{L}w'$, then $\mathsf{K}(w)=\mathsf{K}(w')$.
\end{thm}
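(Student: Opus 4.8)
The plan is to transfer the statement, via the reformulation in \Cref{Thm:KiffKhKM}, to the two truth values on the right-hand side of \Cref{Eq:KiffKhKM}, one of which is already known to be a left cell invariant. Indeed $\mathsf{K}(w)=\mathsf{Kh}(w)\wedge\mathsf{KM}(\star,w)$, and by part~(b) of \Cref{Prop:IndecompConjLRCellInv} we have $\mathsf{KM}(\star,w)=\mathsf{KM}(\star,w')$ whenever $w\sim_{L}w'$. So it suffices to prove that $\mathsf{Kh}$ is constant along left cells, i.e.\ that $\mathsf{Kh}(w)=\mathsf{Kh}(w')$ whenever $w\sim_{L}w'$.

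First I would record that the supply of relevant functors is already a left cell invariant. By \Cref{Eq:ThetaLNonZero}, $\theta_{x}L(w)\neq 0$ precisely when $x\leq_{R}w^{-1}$; since $w\sim_{L}w'$ forces $w^{-1}\sim_{R}(w')^{-1}$, and a down-set for $\leq_{R}$ depends only on the right cell of its top element, the set
\[ \mathcal{X}_{w}:=\{x\in\mathfrak{S}_{n}\ |\ \theta_{x}L(w)\neq 0\}=\{x\in\mathfrak{S}_{n}\ |\ x\leq_{R}w^{-1}\} \]
satisfies $\mathcal{X}_{w}=\mathcal{X}_{w'}$. By \Cref{Eq:ThetaxL=ThetayLImpliesxly}, any isomorphism $\theta_{x}L(w)\cong\theta_{y}L(w)\neq 0$ forces $x\sim_{L}y$. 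Hence $\mathsf{Kh}(w)$ fails precisely when there exist distinct $x,y$ lying in one and the same left cell inside $\mathcal{X}_{w}$ with $\theta_{x}L(w)\cong\theta_{y}L(w)$, and the analogous description holds for $w'$. The problem is thereby reduced to transporting such an isomorphism between $w$ and $w'$ while keeping $x$ and $y$ fixed.

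The core step, which I expect to be the main obstacle, is precisely this transport. My plan is to use that, in type $A$, any two elements of a fixed left cell are linked by a chain of left star operations in the sense of \cite{KL79}, so that it is enough to treat the case where $w'$ arises from $w$ by a single left star operation with respect to a pair $\{s,t\}\subset S_{n}$ with $sts=tst$. One realizes this operation categorically by a functor $\Phi$ on $\mathcal{O}_{0}^{\mathbb{Z}}$ built from the left wall-crossing data — concretely a suitable composition of twisting or Zuckerman functors, or, equivalently, the Koszul dual of an indecomposable projective functor — which relates $L(w)$ and $L(w')$ in a controlled fashion, for instance so that $L(w')$ occurs with multiplicity one in $\Phi L(w)$, say as a direct summand of its semisimple top, and symmetrically for the inverse operation. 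Because $\Phi$ is constructed from the ``left'' and each $\theta_{x}$ from the ``right'', $\Phi$ commutes with every $\theta_{x}$ up to natural isomorphism; feeding $\theta_{x}L(w)$ through this machinery and extracting the constituent governed by the star operation should convert $\theta_{x}L(w)\cong\theta_{y}L(w)$ into $\theta_{x}L(w')\cong\theta_{y}L(w')$, which together with $\mathcal{X}_{w}=\mathcal{X}_{w'}$ yields $\mathsf{Kh}(w)=\mathsf{Kh}(w')$. The genuinely delicate point is this extraction step: $\theta_{x}$ is exact but does not commute with forming tops or socles, so from a retraction $\Phi L(w)\twoheadrightarrow L(w')$ one only gets that $\theta_{x}L(w')$ is a quotient of $\Phi\theta_{x}L(w)\cong\theta_{x}\Phi L(w)$, and one must still show that this quotient is cut out in a way that depends only on the star operation and not on $x$, so that isomorphic $\theta_{x}L(w)$ and $\theta_{y}L(w)$ produce isomorphic $\theta_{x}L(w')$ and $\theta_{y}L(w')$.

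Finally, I would note that the original argument in \cite{MS08} predates the reformulation \Cref{Thm:KiffKhKM} and instead runs through Harish--Chandra bimodules: there one uses that $\mathrm{Ann}_{\mathcal{U}(\mathfrak{sl}_{n})}(L(w))$ depends only on the left cell of $w$, so that the image of $\mathcal{U}(\mathfrak{sl}_{n})$ in $\mathcal{L}(L(w),L(w))$ is one and the same algebra as $w$ varies over the cell, and then compares the bimodules $\mathcal{L}(L(w),L(w))$ directly, moving between representatives of the cell with projective functors. That route sidesteps the indecomposability input but replaces the obstacle above with an equally substantial bimodule-theoretic one.
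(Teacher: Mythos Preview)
The paper does not prove this theorem; it simply quotes it from \cite[Theorem~61]{MS08} as a known input. So there is no ``paper's own proof'' to compare against, and the original reference establishes the result via Harish--Chandra bimodules and annihilator considerations, as you note in your final paragraph.

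Your alternative route has a genuine gap, and you have correctly identified where it lies. The reduction to $\mathsf{Kh}$ via \Cref{Thm:KiffKhKM} and \Cref{Prop:IndecompConjLRCellInv}(b) is fine, as is the observation $\mathcal{X}_{w}=\mathcal{X}_{w'}$. But the ``core step'' is not actually carried out: you propose a functor $\Phi$ realizing a left star operation, commuting with each $\theta_{x}$, and then an ``extraction'' of $L(w')$ from $\Phi L(w)$ that is supposed to be natural enough to transport $\theta_{x}L(w)\cong\theta_{y}L(w)$ to $\theta_{x}L(w')\cong\theta_{y}L(w')$. As you say yourself, $\theta_{x}$ does not commute with taking tops or socles, so the quotient $\Phi\theta_{x}L(w)\twoheadrightarrow\theta_{x}L(w')$ need not be determined by $\Phi\theta_{x}L(w)$ alone in a way independent of $x$. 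Absent a precise construction of $\Phi$ and a concrete argument for this naturality, the proof is a plan rather than a proof.

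There is also a logical-dependency concern you should address before pursuing this route: \Cref{Thm:KiffKhKM} is from \cite{KMM23}, which postdates and builds on \cite{MS08}; you would need to verify that its proof does not already use the left-cell invariance of $\mathsf{K}$ (or equivalent input), else the argument is circular.
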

Lastly, for $I\subset S_{n}$ and $\mathfrak{l}\subset\mathfrak{sl}_{n}$ as in \Cref{SubSec:CategoriesOl}, let $\mathsf{K}_{\mathfrak{l}}$, $\mathsf{Kh}_{\mathfrak{l}}$, and $\mathsf{KM}_{\mathfrak{l}}$ denote the $\mathfrak{l}$-counterparts of $\mathsf{K}$, $\mathsf{Kh}$, and $\mathsf{KM}$ respectively. So we also have the equality of true values $\mathsf{K}_{\mathfrak{l}}(w)=\mathsf{Kh}_{\mathfrak{l}}(w)\wedge\mathsf{KM}_{\mathfrak{l}}(\star,w)$. Then for all $w\in\mathfrak{S}_{n}(I)$, the following equality of true values was proved in \cite[Theorem~1.1]{K10}:
\begin{equation}\label{Thm:ParaLiftK}
\mathsf{K}_{\mathfrak{l}}(w)=\mathsf{K}(ww_{0}^{I}w_{0}).
\end{equation}

\subsection{K{\aa}hrstr\"om's Conjecture}

We now recall \emph{K{\aa}hrstr\"om's Conjecture}, which was first published in \cite[Conjecture 1.2]{KMM23}. This conjecture equates $\mathsf{K}$ with $\mathsf{Kh}$ and decategorified versions, when restricted to involutions. In particular, it suggests that Kostant's problem can be entirely reformulated with combinatorics associated to the Hecke algebra.
\begin{conj}\label{Conj:KahrstromsConjecture}
For an involution $d\in\mathfrak{I}_{n}$, the following assertions are equivalent:
\begin{itemize}
\item[(a)] The involution $d$ is Kostant positive.
\item[(b)] For all $x\neq y\in\mathfrak{S}_{n}$ with $\theta_{x}L(d),\theta_{y}L(d)\neq0$, then $\theta_{x}L(d)\not\cong\theta_{y}L(d)$ in $\mathcal{O}_{0}^{\mathbb{Z}}$.
\item[(c)] For all $x\neq y\in\mathfrak{S}_{n}$ with $\underline{\hat{H}}_{d}\underline{H}_{x},\underline{\hat{H}}_{d}\underline{H}_{y}\neq0$, then $\underline{\hat{H}}_{d}\underline{H}_{x}\neq\underline{\hat{H}}_{d}\underline{H}_{y}$ in $\mathcal{H}_{n}$.
\item[(d)] For all $x\neq y\in\mathfrak{S}_{n}$ with $\mathsf{ev}(\underline{\hat{H}}_{d}\underline{H}_{x}),\mathsf{ev}(\underline{\hat{H}}_{d}\underline{H}_{y})\neq0$, then $\mathsf{ev}(\underline{\hat{H}}_{d}\underline{H}_{x})\neq\mathsf{ev}(\underline{\hat{H}}_{d}\underline{H}_{y})$ in $\mathbb{Z}\mathfrak{S}_{n}$.
\end{itemize} 
\end{conj}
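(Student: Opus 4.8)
The plan is not to prove the full four-way equivalence — that is the open conjecture — but to separate out the implications that hold \emph{unconditionally}, for every $w\in\mathfrak{S}_{n}$ and not only for involutions, so that the genuine difficulty is pinned down precisely. Concretely, I would establish (a) $\Rightarrow$ (b), (d) $\Rightarrow$ (c), and (c) $\Rightarrow$ (b), using only the decategorification dictionary from \Cref{s2}, \Cref{Thm:KiffKhKM}, and the positivity built into the graded setting.

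The implication (a) $\Rightarrow$ (b) is immediate: by definition (b) is exactly the statement $\mathsf{Kh}(d)=\mathtt{true}$, and \Cref{Eq:KiffKhKM} reads $\mathsf{K}(d)=\mathsf{Kh}(d)\wedge\mathsf{KM}(\star,d)$, so $\mathsf{K}(d)=\mathtt{true}$ forces $\mathsf{Kh}(d)=\mathtt{true}$. For (c) $\Rightarrow$ (b), recall that under the isomorphism $\mathsf{Gr}(\mathcal{O}_{0}^{\mathbb{Z}})\xrightarrow{\sim}\mathcal{H}_{n}$, together with the fact that the action of $\mathcal{P}_{0}^{\mathbb{Z}}$ on $\mathcal{O}_{0}^{\mathbb{Z}}$ decategorifies to the right regular action of $\mathcal{H}_{n}$, one has $[\theta_{x}L(d)]=\underline{\hat{H}}_{d}\underline{H}_{x}$. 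Since a module in $\mathcal{O}_{0}^{\mathbb{Z}}$ vanishes precisely when its class does, the non-vanishing hypotheses in (b) and (c) agree on any pair $x\neq y$; and since isomorphic modules have equal classes, $\underline{\hat{H}}_{d}\underline{H}_{x}\neq\underline{\hat{H}}_{d}\underline{H}_{y}$ implies $\theta_{x}L(d)\ncong\theta_{y}L(d)$.

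For (d) $\Rightarrow$ (c), expand $\underline{\hat{H}}_{d}\underline{H}_{x}=[\theta_{x}L(d)]$ in the dual Kazhdan--Lusztig basis: the coefficient of $\underline{\hat{H}}_{z}$ is the graded composition multiplicity $\sum_{j}[\theta_{x}L(d):L(z)\langle j\rangle]\,v^{j}\in\mathbb{Z}_{\geq 0}[v,v^{-1}]$, so its value at $v=1$ is a nonnegative integer, and it is positive exactly when $\underline{\hat{H}}_{z}$ occurs in $\underline{\hat{H}}_{d}\underline{H}_{x}$. Because $\{\mathsf{ev}(\underline{\hat{H}}_{z})\mid z\in\mathfrak{S}_{n}\}$ is a $\mathbb{Z}$-basis of $\mathbb{Z}\mathfrak{S}_{n}$, this forces $\mathsf{ev}(\underline{\hat{H}}_{d}\underline{H}_{x})\neq 0$ if and only if $\underline{\hat{H}}_{d}\underline{H}_{x}\neq 0$, so the hypotheses of (c) and (d) agree on a pair $x\neq y$; and as $\mathsf{ev}$ is a ring homomorphism, $\mathsf{ev}(\underline{\hat{H}}_{d}\underline{H}_{x})\neq\mathsf{ev}(\underline{\hat{H}}_{d}\underline{H}_{y})$ forces $\underline{\hat{H}}_{d}\underline{H}_{x}\neq\underline{\hat{H}}_{d}\underline{H}_{y}$. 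Hence (d) $\Rightarrow$ (c) $\Rightarrow$ (b), and (a) $\Rightarrow$ (b), all hold without any hypothesis.

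The main obstacle is the reverse direction, which is where the conjecture really lives. By \Cref{Thm:KiffKhKM}, (b) $\Rightarrow$ (a) is equivalent to ``$\mathsf{Kh}(d)=\mathtt{true}\Rightarrow\mathsf{KM}(\star,d)=\mathtt{true}$'', a consequence of the Indecomposability Conjecture (\Cref{Conj:IndecompConj}) for $d$ — open in general, but one of the results established here for $\mathfrak{S}_{7}$. The remaining implications (b) $\Rightarrow$ (c) and (c) $\Rightarrow$ (d) would require, respectively, ruling out that two non-isomorphic modules $\theta_{x}L(d)$ and $\theta_{y}L(d)$ have identical graded composition series, and ruling out that $\mathsf{ev}$ identifies two distinct vectors $\underline{\hat{H}}_{d}\underline{H}_{x}$ that become equal only after specialising $v\mapsto 1$; no conceptual mechanism preventing such collisions is known, and this is exactly where the difficulty resides. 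The fallback for the involutions in $\mathfrak{S}_{7}$ is explicit computation in $\mathcal{H}_{7}$ and $\mathbb{Z}\mathfrak{S}_{7}$: the Kostant-negative involutions come essentially for free, since the established Indecomposability Conjecture and \Cref{Thm:KiffKhKM} give $\mathsf{Kh}(d)=\mathtt{false}$, whereupon the contrapositives of (c) $\Rightarrow$ (b) and (d) $\Rightarrow$ (c) make (c) and (d) false as well, so (a)--(d) are all false; for the Kostant-positive involutions one must still verify directly, case by case, that no collisions occur at the level of $\mathcal{H}_{7}$ or $\mathbb{Z}\mathfrak{S}_{7}$.
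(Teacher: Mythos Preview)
This statement is a \emph{conjecture}: the paper neither proves it nor claims to, so there is no proof in the paper against which to compare your attempt. You correctly recognise this and instead isolate the implications that hold unconditionally, which is exactly what the paper itself does. Your arguments for (a)$\Rightarrow$(b), (c)$\Rightarrow$(b), and (d)$\Rightarrow$(c) are correct and coincide with what the paper uses implicitly: the proof of \Cref{Cor:PatternsKahrstromsConjecture} invokes the contrapositives ``$\mathsf{Kh}(d)=\mathtt{false}$ implies $[\mathsf{Kh}](d)=[\mathsf{Kh}^{\mathsf{ev}}](d)=\mathtt{false}$'', and Case~(6)(a) in \Cref{SubSec:RemainingCases} invokes ``$[\mathsf{Kh}^{\mathsf{ev}}](d)=\mathtt{true}$ implies $[\mathsf{Kh}](d)=\mathsf{Kh}(d)=\mathtt{true}$''.

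Your final paragraph on the status for $\mathfrak{S}_{7}$ also matches the paper's closing discussion at the end of \Cref{Sec:IndecompConjA6}: once \Cref{Thm:IC7} is established, $\mathsf{K}(d)=\mathsf{Kh}(d)$ for all $d\in\mathfrak{I}_{7}$, so the Kostant-negative involutions satisfy all four conditions with value $\mathtt{false}$ by your contrapositives, while the Kostant-positive involutions in \Cref{Table:1} would still need a direct computational check of (c) or (d). The paper explicitly states that this last check was \emph{not} carried out (estimated at over a month of GAP3 time). So your assessment of where the difficulty lies is accurate, and there is no gap in what you have written; a full proof does not exist because the statement remains open.
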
  
Statement (b) above is precisely $\mathsf{Kh}(d)$. For $w\in\mathfrak{S}_{n}$, let $[\mathsf{Kh}](w)$ denote the truth value of statement (c) above and let $[\mathsf{Kh}^{\mathsf{ev}}](w)$ denote the truth value of statement (d) above (replacing $d$ with $w$). As an equality of true values, K{\aa}hrstr\"om's conjecture is the claim that for all $d\in\mathfrak{I}_{n}$, we have
\begin{equation}\label{Eq:KhConj}
\mathsf{K}(d)=\mathsf{Kh}(d)=[\mathsf{Kh}](d)=[\mathsf{Kh}^{\mathsf{ev}}](d).
\end{equation}
This conjecture was a significant driving force behind most of the results for this paper. As $[\mathsf{Kh}](d)$ and $[\mathsf{Kh}^{\mathsf{ev}}](d)$ strictly concern Hecke algebra and symmetric group combinatorics, it can thus be checked computationally for small cases. Such computations motivated the results of \Cref{Sec:KostantsProblemConsecutivePatterns} and played a significant role 
in \Cref{Sec:KostantsProblemA6}. In particular, we often prove $\mathsf{K}(d)=\mathtt{false}$ by proving $\mathsf{Kh}(d)=\mathtt{false}$, which comes from knowing that $[\mathsf{Kh}](d)=\mathtt{false}$ via computations.

\section{Kostant's Problem and Consecutive Patterns}
\label{Sec:KostantsProblemConsecutivePatterns}

Let $m\leq n$, $w=\mathtt{i}_{1}\cdots\mathtt{i}_{n}\in\mathfrak{S}_{n}$, and $p=\mathtt{j}_{1}\cdots\mathtt{j}_{m}\in\mathfrak{S}_{m}$. We say $w$ \emph{contains} $p$ \emph{as a pattern} if there exists a subsequence $\mathtt{i}_{a_{1}}\cdots\mathtt{i}_{a_{m}}$ (so $a_{1}<\cdots<a_{m}$ with $a_{i}\in[n]$) which has the same relative order as $\mathtt{j}_{1}\cdots\mathtt{j}_{m}$. As a special case, $w$ \emph{consecutively contains} $p$ \emph{as a pattern} if the entries in the subsequence $\mathtt{i}_{a_{1}}\cdots\mathtt{i}_{a_{m}}$ occur in consecutive positions, that is, if $a_{k+1}=a_{k}+1$ for $1\leq k<m$. 

\begin{ex}\label{Ex:PatternContainment}
Consider $w=\mathtt{1524376}\in\mathfrak{S}_{7}$ and $p=\mathtt{2143}\in\mathfrak{S}_{4}$. Then $w$ contains $p$ as a pattern in four different ways, which we list here by underlying the respective subsequence:
\[ \mathtt{1\underline{5}\underline{2}43\underline{76}}, \hspace{3mm} \mathtt{1\underline{5}2\underline{4}3\underline{76}}, \hspace{3mm} \mathtt{1\underline{5}24\underline{376}}, \hspace{3mm} \mathtt{152\underline{4376}}. \]
The latter subsequence $\mathtt{4376}$ demonstrates a consecutive containment.
\end{ex}

\begin{defns}\label{Defns:SpecialRDCompatibleSequence}
Let $\underline{x}:=(s_{i_{k}},\dots,s_{i_{1}})$ be a reduced word. We say a Bruhat walk $(w_{1},\dots,w_{k})$ is:
\begin{itemize}
\item[(a)] \emph{weakly compatible} with $\underline{x}$ if $w_{j}s_{i_{j}}<w_{j}$ and $w_{j}s_{i_{j-1}}>w_{j}$ for all valid $j$.
\item[(b)] \emph{compatible} with $\underline{x}$ if $w_{j}s_{i_{j}}<w_{j}$ and $w_{j}s_{i_{j\pm1}}>w_{j}$ for all valid $j$.
\end{itemize}
\end{defns}

\begin{rmk}
In the above definition, we stress that the reduced word $\underline{x}=(s_{i_{k}},\dots,s_{i_{1}})$ is indexed from right to left, while the Bruhat walk $(w_{1},\dots,w_{k})$ is indexed left to right.
\end{rmk}

\begin{ex}
\label{Ex:SpecialRDCompatibleSequence}
In the following two examples, we focus on employing \Cref{Eq:DescentSetDescription} to deduce the Bruhat relations. We do this to help with the readability of 
\Cref{Thm:PatternsKostantNegative} below.
\begin{itemize}
\item[(a)] Let $w=\mathtt{14325}\in\mathfrak{S}_{5}$. For the reduced word $\underline{x}=(s_{i_{4}},s_{i_{3}},s_{i_{2}},s_{i_{1}}):=(s_{2},s_{3},s_{1},s_{2})$ and Bruhat walk $(w_{1},w_{2},w_{3},w_{4}):=(w,ws_{1},w,ws_{3})$, we show $w_{j}s_{i_{j}}<w_{j}$ for all $1\leq j\leq 4$:
\begin{align*}
w_{1}s_{i_{1}}=ws_{2}<w=w_{1} \hspace{2mm} &\text{(since $w(2)=4>3=w(3)$)}, \\
w_{2}s_{i_{2}}=(ws_{1})s_{1}<ws_{1}=w_{2} \hspace{2mm} &\text{(since $(ws_{1})(1)=4>1=(ws_{1})(2)$)}, \\
w_{3}s_{i_{3}}=ws_{3}<w=w_{3} \hspace{2mm} &\text{(since $w(3)=3>2=w(4)$)}, \\
w_{4}s_{i_{4}}=(ws_{3})s_{2}<ws_{3}=w_{4} \hspace{2mm} &\text{(since $(ws_{3})(2)=4>2=(ws_{3})(3)$)}.
\end{align*}
We now show that $w_{j}s_{i_{j-1}}>w_{j}$ for all $1<j\leq 4$:
\begin{align*}
w_{2}s_{i_{1}}=(ws_{1})s_{2}>ws_{1}=w_{2} \hspace{2mm} &\text{(since $(ws_{1})(2)=1<3=(ws_{1})(3)$)}, \\
w_{3}s_{i_{2}}=ws_{1}>w=w_{3} \hspace{2mm} &\text{(since $w(1)=1<4=w(2)$)}, \\
w_{4}s_{i_{3}}=(ws_{3})s_{3}>ws_{3}=w_{4} \hspace{2mm} &\text{(since $(ws_{3})(3)=2<3=(ws_{3})(4)$)}.
\end{align*}
Therefore $(w,ws_{1},w,ws_{3})$ is weakly compatible with $\underline{x}=(s_{2},s_{3},s_{1},s_{2})$.
\item[(b)] Let $w=\mathtt{3142}\in\mathfrak{S}_{4}$, and consider the reduced word $\underline{x}=(s_{i_{3}},s_{i_{2}},s_{i_{1}}):=(s_{1},s_{2},s_{3})$ and Bruhat walk $(w_{1},w_{2},w_{3}):=(w,ws_{2},w)$. We first show that $w_{j}s_{i_{j}}<w_{j}$ for all $1\leq j\leq3$:
\begin{align*}
w_{1}s_{i_{1}}=ws_{3}<w=w_{1} \hspace{2mm} &\text{(since $w(3)=4>2=w(4)$)}, \\
w_{2}s_{i_{2}}=(ws_{2})s_{2}<ws_{2}=w_{2} \hspace{2mm} &\text{(since $(ws_{2})(2)=4>1=(ws_{2})(3)$)}, \\
w_{3}s_{i_{3}}=ws_{1}<w=w_{3} \hspace{2mm} &\text{(since $w(1)=3>1=w(2)$)}.
\end{align*}
We now show that $w_{j}s_{i_{j-1}}>w_{j}$ for all $1<j\leq 3$ and $w_{j}s_{i_{j+1}}>w_{j}$ for all $1\leq j<3$:
\begin{align*}
w_{2}s_{i_{1}}=(ws_{2})s_{3}>ws_{2}=w_{2} \hspace{2mm} &\text{(since $(ws_{2})(3)=1<2=(ws_{2})(4)$)}, \\
w_{3}s_{i_{2}}=ws_{2}>w=w_{3} \hspace{2mm} &\text{(since $w(2)=1<4=w(3)$)}, \\
w_{1}s_{i_{2}}=ws_{2}>w=w_{1} \hspace{2mm} &\text{(same as the previous case)}, \\
w_{2}s_{i_{3}}=(ws_{2})s_{1}>ws_{2}=w_{2} \hspace{2mm} &\text{(since $(ws_{2})(1)=3<4=(ws_{2})(2)$)}.
\end{align*}
Therefore $(w,ws_{2},w)$ is compatible with $\underline{x}=(s_{1},s_{2},s_{3})$.
\end{itemize} 
\end{ex}

\begin{lem}\label{Lem:SpecialRDCompatibleSequence}
Let $(w_{1},\dots,w_{k})$ be a Bruhat walk and $\underline{x}=(s_{i_{k}},\dots,s_{i_{1}})$ a reduced word.
\begin{itemize}
\item[(i)] If $(w_{1},\dots,w_{k})$ is weakly compatible with $\underline{x}$ then $\theta_{s_{i_{k}}}\hspace{-2mm}\cdots\hspace{1mm}\theta_{s_{i_{1}}}L(w_{1})\neq 0$.
\item[(ii)] If $(w_{1},\dots,w_{k})$ is compatible with $\underline{x}$, then $\theta_{s_{i_{k}}}\hspace{-2mm}\cdots\hspace{1mm}\theta_{s_{i_{1}}}L(w_{1})\cong\theta_{s_{i_{k}}}L(w_{k})\neq 0$.
\end{itemize}
\end{lem}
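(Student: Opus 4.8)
The plan is to induct on $k$ in both parts, but with different tools: part (i) is handled entirely inside the Grothendieck group $\mathsf{Gr}(\mathcal{O}_{0}^{\mathbb{Z}})\cong\mathcal{H}_{n}$, whereas part (ii) bootstraps from (i) and uses the indecomposability input of \Cref{Cor:KMxSmallSup}. For part (i): under the identification $\mathsf{Gr}(\mathcal{O}_{0}^{\mathbb{Z}})\cong\mathcal{H}_{n}$, in which the action of projective functors is right multiplication, the class of $\theta_{s_{i_{j}}}\cdots\theta_{s_{i_{1}}}L(w_{1})$ is $A_{j}:=\underline{\hat{H}}_{w_{1}}\underline{H}_{s_{i_{1}}}\cdots\underline{H}_{s_{i_{j}}}$. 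By \Cref{Rmk:NonNegCoeffPolys} we have $A_{j}\in\mathbb{Z}_{\geq 0}[v+v^{-1}]\{\underline{\hat{H}}_{x}\mid x\in\mathfrak{S}_{n}\}$, so $\theta_{s_{i_{k}}}\cdots\theta_{s_{i_{1}}}L(w_{1})=0$ would force $A_{k}=0$, and it suffices to show $A_{k}\neq0$. I prove by induction on $j$ that $\underline{\hat{H}}_{w_{j}}$ occurs in $A_{j-1}$ with nonzero coefficient (the case $j=1$ being clear, and then $A_{k}=A_{k-1}\underline{H}_{s_{i_{k}}}\neq0$ follows from $w_{k}s_{i_{k}}<w_{k}$ and positivity). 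For the step, weak compatibility gives $w_{j}s_{i_{j}}<w_{j}$, so by \Cref{Eq:DualKLBRightMultHs}
\[ \underline{\hat{H}}_{w_{j}}\underline{H}_{s_{i_{j}}}=(v+v^{-1})\underline{\hat{H}}_{w_{j}}+\underline{\hat{H}}_{w_{j}s_{i_{j}}}+\sum_{\substack{x>w_{j}\\ xs_{i_{j}}>x}}\mu(x,w_{j})\underline{\hat{H}}_{x}, \]
and I claim $\underline{\hat{H}}_{w_{j+1}}$ appears on the right-hand side. Since $w_{j},w_{j+1}$ are Bruhat neighbours, $\mu(w_{j},w_{j+1})=1$. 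If $w_{j+1}>w_{j}$, then $w_{j+1}s_{i_{j}}>w_{j+1}$ by weak compatibility, so $\underline{\hat{H}}_{w_{j+1}}$ occurs in the sum; if $w_{j+1}<w_{j}$, write $w_{j+1}=w_{j}r$ with $r\in S_{n}$, so $w_{j}r<w_{j}$, and note that if $r\neq s_{i_{j}}$ the standard fact $D_{R}(yr)\supseteq D_{R}(y)\setminus\{r\}$ (for $yr<y$) would force $s_{i_{j}}\in D_{R}(w_{j+1})$, against $w_{j+1}s_{i_{j}}>w_{j+1}$; hence $w_{j+1}=w_{j}s_{i_{j}}$ and $\underline{\hat{H}}_{w_{j+1}}$ is the middle term. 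Non-negativity of all coefficients then carries a nonzero coefficient of $\underline{\hat{H}}_{w_{j}}$ in $A_{j-1}$ to a nonzero coefficient of $\underline{\hat{H}}_{w_{j+1}}$ in $A_{j}=A_{j-1}\underline{H}_{s_{i_{j}}}$.

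For part (ii), I induct on $j$ with hypothesis $\theta_{s_{i_{j}}}\cdots\theta_{s_{i_{1}}}L(w_{1})\cong\theta_{s_{i_{j}}}L(w_{j})$; the base case is trivial, and for the step, abbreviating $s:=s_{i_{j}}$, $t:=s_{i_{j+1}}$, $w:=w_{j}$, $v:=w_{j+1}$, it remains to prove $\theta_{t}\theta_{s}L(w)\cong\theta_{t}L(v)$, where compatibility and the Bruhat-walk condition yield $ws<w$, $wt>w$, $vs>v$, $vt<v$, and $w,v$ Bruhat neighbours. Since $wt>w$, \Cref{Prop:JantsenMiddleStructure} gives $\theta_{t}L(w)=0$; since $\theta_{s}L(w)$ has simple top and simple socle both $\cong L(w)$ with semisimple middle $\mathsf{J}_{s}(w)$ (\Cref{Prop:JantsenMiddleStructure}), applying the exact functor $\theta_{t}$ to the two evident short exact sequences and using $\theta_{t}L(w)=0$ gives
\[ \theta_{t}\theta_{s}L(w)\;\cong\;\theta_{t}\mathsf{J}_{s}(w)\;\cong\;\theta_{t}L(ws)\oplus\bigoplus_{\substack{x>w\\ xs>x}}\big(\theta_{t}L(x)\big)^{\oplus\mu(w,x)}, \]
a direct sum of modules each of which, again by \Cref{Prop:JantsenMiddleStructure}, is zero (if $xt>x$) or indecomposable (if $xt<x$). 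On the other hand $s\neq t$ (the word $(s_{i_{k}},\dots,s_{i_{1}})$ being reduced), and a short computation from \Cref{Eq:HeckePresentation} gives $\underline{H}_{s}\underline{H}_{t}=\underline{H}_{st}$, so $\theta_{t}\theta_{s}\cong\theta_{st}$; as $|\mathsf{Sup}(st)|=2\leq5$, \Cref{Cor:KMxSmallSup} shows $\theta_{t}\theta_{s}L(w)$ is zero or indecomposable.

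It therefore suffices to check that $L(v)$ occurs in $\mathsf{J}_{s}(w)$ with multiplicity one and satisfies $vt<v$: then $\theta_{t}L(v)$ is a nonzero indecomposable direct summand of $\theta_{t}\mathsf{J}_{s}(w)\cong\theta_{t}\theta_{s}L(w)$, hence equals it, giving $\theta_{t}\theta_{s}L(w)\cong\theta_{t}L(v)$ and closing the induction (the final nonvanishing being clear since $w_{k}s_{i_{k}}<w_{k}$). The condition $vt<v$ is given. For the occurrence of $L(v)$: if $v<w$, write $v=wr$ with $r\in S_{n}$ and $wr<w$; if $r\neq s$ the standard descent fact gives $s\in D_{R}(wr)=D_{R}(v)$, contradicting $vs>v$, so $v=ws$, which appears in $\mathsf{J}_{s}(w)$ with multiplicity one and — being strictly below $w$ — is not among the $x>w$; if $v>w$, then $vs>v$ is exactly the condition for $v$ to appear in the sum, with multiplicity $\mu(w,v)=1$ since $w,v$ are Bruhat neighbours, and $v\neq ws$ as $ws<w<v$.

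The step I expect to be the main obstacle is precisely the one just described in (ii). Exactness of $\theta_{t}$ together with $\theta_{t}L(w)=0$ only yields the Grothendieck-group identity $[\theta_{t}\theta_{s}L(w)]=[\theta_{t}\mathsf{J}_{s}(w)]$ cheaply; upgrading this to a module isomorphism and then singling out $\theta_{t}L(v)$ as the unique surviving summand is where indecomposability of $\theta_{t}\theta_{s}L(w)$ — via $\theta_{t}\theta_{s}\cong\theta_{st}$ and \Cref{Cor:KMxSmallSup} — does the real work. Everything else reduces to combinatorics of Bruhat neighbours and right descent sets.
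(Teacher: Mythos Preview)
Your proof is correct and follows essentially the same route as the paper's: part (i) is done in the Grothendieck group via \Cref{Eq:DualKLBRightMultHs} and the positivity of \Cref{Rmk:NonNegCoeffPolys}, and part (ii) proceeds by induction, using the Jantzen middle description, the vanishing $\theta_{t}L(w)=0$, the identification $\theta_{t}\theta_{s}\cong\theta_{st}$ for $s\neq t$, and indecomposability from \Cref{Cor:KMxSmallSup}. One point worth noting: both in (i) and in (ii) you carefully split into the cases $w_{j+1}>w_{j}$ and $w_{j+1}<w_{j}$, using the descent fact $D_{R}(yr)\supseteq D_{R}(y)\setminus\{r\}$ (for $yr<y$) to force $w_{j+1}=w_{j}s_{i_{j}}$ in the latter case; the paper's write-up phrases the corresponding step uniformly via ``$w_{j+1}s_{i_{j}}>w_{j+1}$ and $\mu(w_{j},w_{j+1})=1$'' without making this dichotomy explicit, so your version is in fact a bit more complete here (the case $w_{j+1}<w_{j}$ genuinely occurs in the paper's own examples).
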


\begin{proof}
Item (i): It suffices to prove that $[\theta_{s_{i_{k}}}\hspace{-2mm}\cdots\hspace{1mm}\theta_{s_{i_{1}}}L(w_{1})]=\underline{\hat{H}}_{w_{1}}\underline{H}_{s_{i_{1}}}\hspace{-2mm}\cdots\underline{H}_{s_{i_{k}}}\neq 0$. For $1\leq j<k$, since $w_{j}s_{i_{j}}<w$, then by \Cref{Eq:DualKLBRightMultHs} we have that
\[ \underline{\hat{H}}_{w_{j}}\underline{H}_{s_{i_{j}}}=(v+v^{-1})\underline{\hat{H}}_{w_{j}}+\underline{\hat{H}}_{w_{j}s_{i_{j}}}+\sum_{\substack{w>w_{j} \\ ws_{i_{j}}>w}}\mu(w_{j},w)\underline{\hat{H}}_{w}. \]
We know $w_{j+1}s_{i_{j}}>w_{j+1}$ since $(w_{1},\dots,w_{k})$ is weakly compatible with $\underline{x}$, and that $\mu(w_{j},w_{j+1})=1$ since $w_{j}$ and $w_{j+1}$ are Bruhat neighbours. Hence from the above equality we see that
\begin{equation}
\label{Eq:WeaklyCompatibleDKLBasisAppearence1}
[\underline{\hat{H}}_{w_{j+1}}]\underline{\hat{H}}_{w_{j}}\underline{H}_{s_{i_{j}}}\neq0.
\end{equation}
In other words, $\underline{\hat{H}}_{w_{j+1}}$ appears with a non-zero coefficient in the product $\underline{\hat{H}}_{w_{j}}\underline{H}_{s_{i_{j}}}$ when expressed in terms of the dual Kazhdan-Lusztig basis. From \Cref{Rmk:NonNegCoeffPolys}, applying \Cref{Eq:WeaklyCompatibleDKLBasisAppearence1} inductively tells us that $\underline{\hat{H}}_{w_{k}}$ appears with non-zero coefficient in the product $\underline{\hat{H}}_{w_{1}}\underline{H}_{s_{i_{1}}}\cdots\underline{H}_{s_{i_{k-1}}}$ when expressed in terms of the dual Kazhdan-Lusztig basis. Therefore, we must have that
\[ (\underline{\hat{H}}_{w_{1}}\underline{H}_{s_{i_{1}}}\cdots\underline{H}_{s_{i_{k-1}}})\underline{H}_{s_{i_{k}}}=(\underline{\hat{H}}_{w_{k}}+X)\underline{H}_{s_{i_{k}}}=\underline{\hat{H}}_{w_{k}}\underline{H}_{s_{i_{k}}}+X\underline{H}_{s_{i_{k}}}\in\mathbb{Z}_{\geq 0}[v+v^{-1}]\{\underline{\hat{H}}_{w} \ | \ w\in W\} \]
for some $X\in\mathbb{Z}_{\geq 0}[v+v^{-1}]\{\underline{\hat{H}}_{w} \ | \ w\in W\}$. Since we are dealing with coefficients in $\mathbb{Z}_{\geq0}[v+v^{-1}]$,
\[ \underline{\hat{H}}_{w_{k}}\underline{H}_{s_{i_{k}}}+X\underline{H}_{s_{i_{k}}}=0 \iff  \underline{\hat{H}}_{w_{k}}\underline{H}_{s_{i_{k}}}=0 \hspace{1mm} \text{ and } \hspace{1mm} X\underline{H}_{s_{i_{k}}}=0.  \]
However, $\underline{\hat{H}}_{w_{k}}\underline{H}_{s_{i_{k}}}\neq 0$ since $w_{k}s_{i_{k}}<w_{k}$, thus $\underline{\hat{H}}_{w_{k}}\underline{H}_{s_{i_{k}}}+X\underline{H}_{s_{i_{k}}}\neq 0$ which proves (i).

Item (ii): Compatibility implies weakly compatibility, thus by (i) we have the non-zero condition. As such, we only need to show that we have an isomorphism $\theta_{s_{i_{k}}}\hspace{-2mm}\cdots\hspace{1mm}\theta_{s_{i_{1}}}L(w_{1})\cong\theta_{s_{i_{k}}}L(w_{k})$. We prove this by induction on $k$, with the base case being immediate. By induction we have
\[ \theta_{s_{i_{k}}}\left(\theta_{s_{i_{k-1}}}\hspace{-2mm}\cdots\hspace{1mm}\theta_{s_{i_{1}}}L(w_{1})\right)\cong\theta_{s_{i_{k}}}\theta_{s_{i_{k-1}}}L(w_{k-1}), \]
noting $(w_{1},\dots,w_{k-1})$ is compatible with $(s_{i_{k-1}},\dots,s_{i_{1}})$. As $w_{k-1}s_{i_{k-1}}<w_{k-1}$, by 
\Cref{Prop:JantsenMiddleStructure} the module $\theta_{s_{i_{k-1}}}L(w_{k-1})$ is indecomposable, of graded length three, has simple top and socle isomorphic to $L(w_{k-1})$, and has semi-simple Jantzen middle $\mathsf{J}_{s_{i_{k-1}}}(w_{k-1})$. Since $w_{k-1}s_{i_{k}}>w_{k-1}$, then $\theta_{s_{i_{k}}}L(w_{k-1})=0$. This implies that we have an isomorphism
\[ \theta_{s_{i_{k}}}\theta_{s_{i_{k-1}}}L(w_{k-1})\cong\theta_{s_{i_{k}}}\mathsf{J}_{s_{i_{k-1}}}(w_{k-1}). \]
Furthermore, the module $L(w_{k})$ appears as a summand of $\mathsf{J}_{s_{i_{k-1}}}(w_{k-1})$ with multiplicity $1$ since both $w_{k}s_{i_{k-1}}>w_{k}$ and $\mu(w_{k-1},w_{k})=1$. Therefore, we have the isomorphism
\[ \theta_{s_{i_{k}}}\theta_{s_{i_{k-1}}}L(w_{k-1})\cong\theta_{s_{i_{k}}}L(w_{k})\oplus\theta_{s_{i_{k}}}M, \]
where $M$ is such that $\mathsf{J}_{s_{i_{k-1}}}(w_{k-1})=L(w_{k})\oplus M$. As $|\mathsf{Sup}(s_{i_{k-1}}s_{i_{k}})|=2$, \Cref{Cor:KMxSmallSup} implies that $\theta_{s_{i_{k-1}}s_{i_{k}}}L(w_{k-1})=\theta_{s_{i_{k}}}\theta_{s_{i_{k-1}}}L(w_{k-1})$ is either zero or indecomposable. But $\theta_{s_{i_{k}}}L(w_{k})\neq0$ since $w_{k}s_{i_{k}}<w_{k}$, thus $\theta_{s_{i_{k-1}}s_{i_{k}}}L(w_{k-1})$ is indecomposable. Therefore $\theta_{s_{i_{k}}}M=0$ and hence we have that $\theta_{s_{i_{k}}}\theta_{s_{i_{k-1}}}L(w_{k-1})\cong\theta_{s_{i_{k}}}L(w_{k})$, which completes the proof of (ii).
\end{proof}

\begin{thm}\label{Thm:PatternsKostantNegative}
Let $w\in\mathfrak{S}_{n}$. Then $w$ is Kostant negative whenever there exists an element in the same left cell as $w$ which consecutively contains any of the following patterns:
\[ \mathtt{2143}, \hspace{2mm} \mathtt{3142}, \hspace{2mm} \mathtt{14325}, \hspace{2mm} \mathtt{15324}, \hspace{2mm} \mathtt{25314}, \hspace{2mm} \mathtt{24315}. \]
\end{thm}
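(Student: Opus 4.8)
\emph{The plan.} By \Cref{Thm:KLeftCellInv}, Kostant's problem is constant along left cells, so we may assume $w$ itself consecutively contains one of the six patterns; and by \Cref{Thm:KiffKhKM} it then suffices to prove $\mathsf{Kh}(w)=\mathtt{false}$, i.e.\ to produce $x\neq y$ in $\mathfrak{S}_{n}$ with $\theta_{x}L(w)\cong\theta_{y}L(w)\neq 0$ in $\mathcal{O}_{0}^{\mathbb{Z}}$. The first move is to push the whole question inside the ``window'' of the pattern, a parabolic $\cong\mathfrak{S}_{m}$ with $m\in\{4,5\}$.

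\emph{Localisation at the pattern window.} Suppose $w$ consecutively contains $p\in\mathfrak{S}_{m}$ in positions $a,a+1,\dots,a+m-1$, and set $P:=\langle s_{a},\dots,s_{a+m-2}\rangle$ with $\iota\colon\mathfrak{S}_{m}\xrightarrow{\sim}P$ sending the $i$-th generator of $\mathfrak{S}_{m}$ to $s_{a+i-1}$. For every $u\in\mathfrak{S}_{m}$, the entries of $w\iota(u)$ in positions $a,\dots,a+m-1$ have the same relative order as $pu$, so for every $u$ and every generator $\bar{s}$ of $\mathfrak{S}_{m}$ one has $w\iota(u)\iota(\bar{s})<w\iota(u)$ in $\mathfrak{S}_{n}$ if and only if $pu\bar{s}<pu$ in $\mathfrak{S}_{m}$. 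Hence any Bruhat walk in $\mathfrak{S}_{m}$ starting at $p$, built from generators only and compatible with a reduced word $\underline{\bar{x}}$, lifts through $u\mapsto w\iota(p^{-1}u)$ to a Bruhat walk in $\mathfrak{S}_{n}$ starting at $w$ and compatible with $\iota(\underline{\bar{x}})$, and all Bruhat-neighbour $\mu$-values stay equal to $1$; thus \Cref{Lem:SpecialRDCompatibleSequence}(ii) applies verbatim in $\mathfrak{S}_{n}$. Since only generators of $P$ occur, the Bott--Samelson functor built from such a word has, as indecomposable summands, only $\theta_{z}$ with $z\in P$, and with the same graded multiplicities as in $\mathfrak{S}_{m}$. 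It therefore suffices to produce, for each of the six patterns $p$ and inside $\mathfrak{S}_{m}$, two elements $z_{1}\neq z_{2}$ of $P$ (equivalently of $\mathfrak{S}_{m}$, via $\iota$) with $\theta_{z_{1}}L(p)\cong\theta_{z_{2}}L(p)\neq 0$, obtained from such Bruhat-walk data.

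\emph{The mechanism, and the case $\mathtt{3142}$.} For $\mathtt{2143}$ this is \cite{CM24}. For the others the mechanism is: choose a reduced word $\underline{\bar{x}}=(\bar{s}_{i_{k}},\dots,\bar{s}_{i_{1}})$ with $k\geq 2$ and a Bruhat walk compatible with it that returns to its start $p$; then \Cref{Lem:SpecialRDCompatibleSequence}(ii) gives $\theta_{\bar{s}_{i_{k}}}\cdots\theta_{\bar{s}_{i_{1}}}L(p)\cong\theta_{\bar{s}_{i_{k}}}L(p)$, which is indecomposable and non-zero by \Cref{Prop:JantsenMiddleStructure}. Decomposing the Bott--Samelson functor $\theta_{\bar{s}_{i_{k}}}\cdots\theta_{\bar{s}_{i_{1}}}$ into indecomposable projective functors and invoking Krull--Schmidt, exactly one summand $\theta_{z^{*}}$ acts non-trivially on $L(p)$, and $\theta_{z^{*}}L(p)\cong\theta_{\bar{s}_{i_{k}}}L(p)\neq 0$; if, in addition, $\bar{s}_{i_{k}}$ does not occur as a summand of $\theta_{\bar{s}_{i_{k}}}\cdots\theta_{\bar{s}_{i_{1}}}$, then $z^{*}\neq\bar{s}_{i_{k}}$ and $\mathsf{Kh}(p)=\mathtt{false}$. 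For $\mathtt{3142}$ this is immediate from \Cref{Ex:SpecialRDCompatibleSequence}(b): the closed walk $(p,ps_{2},p)$ is compatible with $(s_{1},s_{2},s_{3})$, while $\theta_{s_{1}}\theta_{s_{2}}\theta_{s_{3}}$ sends $P(e)$ to an indecomposable projective, hence equals $\theta_{z}$ for an element $z$ of length $3$, so $\theta_{z}L(\mathtt{3142})\cong\theta_{s_{1}}L(\mathtt{3142})\neq 0$ with $z\neq s_{1}$.

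\emph{The length-five patterns, and the main obstacle.} For $\mathtt{14325},\mathtt{15324},\mathtt{25314},\mathtt{24315}$ one follows the same plan, taking as input (weakly) compatible walks such as the one for $\mathtt{14325}$ in \Cref{Ex:SpecialRDCompatibleSequence}(a); here \Cref{Lem:SpecialRDCompatibleSequence}(i) is needed first, to guarantee the relevant composite is non-zero. The new difficulty is that the Bott--Samelson functors no longer remain indecomposable — already a functor such as $\theta_{s_{2}}\theta_{s_{3}}\theta_{s_{1}}\theta_{s_{2}}$ splits, reflecting the non-trivial Kazhdan--Lusztig polynomial of $\mathtt{34125}$ — so isolating the surviving summand $\theta_{z^{*}}$ requires expanding the product in the Kazhdan--Lusztig basis and then discarding those $\theta_{z}$ with $\theta_{z}L(p)=0$ via \Cref{Eq:ThetaLNonZero}. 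One runs this for two reduced words whose compatible walks share the same terminal vertex and terminal generator, obtaining two survivors with $\theta_{z^{*}_{1}}L(p)\cong\theta_{z^{*}_{2}}L(p)\neq 0$, and finally checks $z^{*}_{1}\neq z^{*}_{2}$. I expect this last step to be the genuinely delicate point: the distinctness $z^{*}_{1}\neq z^{*}_{2}$ is precisely the property that fails for other patterns, so it cannot be made automatic, and certifying it — with \Cref{Eq:ThetaxL=ThetayLImpliesxly} together with explicit knowledge of which Bott--Samelson summands survive on $L(p)$ — is where the real work lies. The localisation step above, although conceptually the heart of the argument (it is exactly \emph{consecutive} containment that makes the window Bruhat combinatorics, and hence the projective-functor action, insensitive to the surrounding entries), is largely formal, and the per-pattern Bruhat-walk bookkeeping of the type displayed in \Cref{Ex:SpecialRDCompatibleSequence} is lengthy but routine.
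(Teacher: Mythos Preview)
Your reduction to $\mathsf{Kh}(w)=\mathtt{false}$, the localisation to the pattern window, and the treatment of $\mathtt{3142}$ are all correct and match the paper's argument. (One small correction: the Bott--Samelson $\theta_{s_{2}}\theta_{s_{3}}\theta_{s_{1}}\theta_{s_{2}}$ does \emph{not} split; a direct computation gives $\underline{H}_{s_{2}}\underline{H}_{s_{1}}\underline{H}_{s_{3}}\underline{H}_{s_{2}}=\underline{H}_{s_{2}s_{1}s_{3}s_{2}}$, and indeed the paper uses exactly $\theta_{x}\cong\theta_{s_{m+1}}\theta_{s_{m+2}}\theta_{s_{m}}\theta_{s_{m+1}}$.)

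For the four length-five patterns, however, your proposal is only a plan, and it diverges from what the paper actually does. You propose to run two compatible walks, decompose two Bott--Samelson functors, isolate the unique surviving summand on $L(p)$ in each, and then certify that the survivors are distinct --- and you explicitly flag this last step as ``where the real work lies'' without carrying it out. The paper does not follow this route. Instead it names the two elements at the outset: $x=s_{m+1}s_{m+2}s_{m}s_{m+1}$ and $y=s_{m+2}s_{m+3}x$, so $x\neq y$ is immediate. The weakly compatible walk of \Cref{Ex:SpecialRDCompatibleSequence}(a) together with \Cref{Lem:SpecialRDCompatibleSequence}(i) gives $\theta_{x}L(w)\neq 0$. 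The isomorphism $\theta_{y}L(w)\cong\theta_{x}L(w)$ is then obtained not by comparing survivors, but by a Jantzen-middle argument: one first shows $\theta_{x}L(ws_{m+3})=0$ via a short compatible-walk reduction, then uses $\theta_{y}\cong\theta_{x}\theta_{s_{m+3}}\theta_{s_{m+2}}$ and \Cref{Prop:JantsenMiddleStructure} to get $\theta_{y}L(w)\cong\theta_{x}L(w)\oplus\theta_{x}M$. The punchline is that $|\mathsf{Sup}(y)|=4$, so \Cref{Cor:KMxSmallSup} forces $\theta_{y}L(w)$ to be indecomposable, whence $\theta_{x}M=0$. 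This use of the small-support indecomposability result is the idea missing from your sketch, and it is what replaces the ``delicate'' distinctness verification you anticipated.
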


\begin{proof}
By \Cref{Thm:KLeftCellInv}, $\mathsf{K}(w)$ is left cell invariant, thus we may assume that $w$ itself consecutively contains any of the above patterns. For the pattern $\mathtt{2143}$, this was proven in \cite[Proposition~5]{CM24}. We prove this theorem for patterns $\mathtt{3142}$ and $\mathtt{14325}$, and explain how the remaining patterns follow in a completely analogous manner to that of $\mathtt{14325}$.

First assume $w=\mathtt{i}_{1}\cdots\mathtt{i}_{n}\in S_{n}$ consecutively contains $\mathtt{3142}$. So there exists $m\in[n-3]$ such that $\mathtt{i}_{m}\mathtt{i}_{m+1}\mathtt{i}_{m+2}\mathtt{i}_{m+3}$ has the same relative order as the sequence $\mathtt{3142}$. In other words,
\begin{equation}\label{Eq:2143PatternInequailities}
\mathtt{i}_{m+2}>\mathtt{i}_{m}>\mathtt{i}_{m+3}>\mathtt{i}_{m+1}.
\end{equation}
Consider the Bruhat walk $(w,ws_{m+1},w)$ and reduced word $\underline{x}=(s_{m},s_{m+1},s_{m+2})$. By the chain of inequalities \eqref{Eq:2143PatternInequailities} (see also (ii) of \Cref{Ex:SpecialRDCompatibleSequence} as the computations are analogous), one can deduce that $(w,ws_{m+1},w)$ is compatible with  $\underline{x}$. Then, by (ii) of \Cref{Lem:SpecialRDCompatibleSequence},
\[ \theta_{s_{m+2}s_{m+1}s_{m}}L(w)\cong\theta_{s_{m}}\theta_{s_{m+1}}\theta_{s_{m+2}}L(w)\cong\theta_{s_{m}}L(w)\neq 0. \]
Therefore $\mathsf{Kh}(w)=\mathtt{false}$, and hence by \Cref{Eq:KiffKhKM} the element $w$ is Kostant negative.

Now assume $w=\mathtt{i}_{1}\cdots\mathtt{i}_{n}\in S_{n}$ consecutively contains $\mathtt{14325}$. So there exists $m\in[n-4]$ where $\mathtt{i}_{m}\mathtt{i}_{m+1}\mathtt{i}_{m+2}\mathtt{i}_{m+3}\mathtt{i}_{m+4}$ has the same relative order as $\mathtt{14325}$. In other words,
\begin{equation}\label{Eq:14325PatternInequailities}
\mathtt{i}_{m+4}>\mathtt{i}_{m+1}>\mathtt{i}_{m+2}>\mathtt{i}_{m+3}>\mathtt{i}_{m}.
\end{equation}
Let $x:=s_{m+1}s_{m+2}s_{m}s_{m+1}$ and $y:=s_{m+2}s_{m+3}x$. We seek to prove that $\theta_{x}L(w)\cong\theta_{y}L(w)\neq 0$, meaning $\mathsf{Kh}(w)=\mathtt{false}$ and thus $\mathsf{K}(w)=\mathtt{false}$ by \Cref{Eq:KiffKhKM}. We first prove $\theta_{x}L(w)\neq 0$. Consider the Bruhat walk $(w,ws_{m},w,ws_{m+2})$ and reduced word $\underline{x}=(s_{m+1},s_{m+2},s_{m},s_{m+1})$ of $x$. From \Cref{Eq:14325PatternInequailities} (see also (i) from 
\Cref{Ex:SpecialRDCompatibleSequence} as the computations are analogous), one can deduce that $(w,ws_{m},w,ws_{m+2})$ is weakly compatible with $\underline{x}$. Thus, by (i) of  
\Cref{Lem:SpecialRDCompatibleSequence},
\[ \theta_{x}L(w)\cong\theta_{s_{m+1}}\theta_{s_{m+2}}\theta_{s_{m}}\theta_{s_{m+1}}L(w)\neq 0, \] 
where $\theta_{x}\cong\theta_{s_{m+1}}\theta_{s_{m+2}}\theta_{s_{m}}\theta_{s_{m+1}}$ can be checked directly. So we have the non-zero condition. It remains to prove $\theta_{x}L(w)\cong\theta_{y}L(w)$. To help show this, we first prove that $\theta_{x}L(ws_{m+3})=0$. Consider the reduced word $(s_{m},s_{m+1})$ and the Bruhat walk $(ws_{m+3},ws_{m+3}s_{m})$. By \Cref{Eq:14325PatternInequailities}, one can check that  $(ws_{m+3},ws_{m+3}s_{m})$ is compatible with $(s_{m},s_{m+1})$, and hence by (ii) of \Cref{Lem:SpecialRDCompatibleSequence}, the module $\theta_{x}L(ws_{m+3})\cong\theta_{s_{m+1}}\theta_{s_{m+2}}(\theta_{s_{m}}\theta_{s_{m+1}}L(ws_{m+3}))$ is isomorphic to
\[ \theta_{s_{m+1}}\theta_{s_{m+2}}\theta_{s_{m}}L(ws_{m+3}s_{m})=\theta_{s_{m+1}}\theta_{s_{m}}\theta_{s_{m+2}}L(ws_{m+3}s_{m})=0, \]
as $\theta_{s_{m+2}}$ commutes with $\theta_{s_{m}}$ and $\theta_{s_{m+2}}L(ws_{m+3}s_{m})=0$ as $(ws_{m+3}s_{m})s_{m+2}>ws_{m+3}s_{m}$. Also, one can directly check that $\theta_{y}\cong\theta_{x}\theta_{s_{m+2}s_{m+3}}\cong\theta_{x}\theta_{s_{m+3}}\theta_{s_{m+2}}$. Now consider the reduced word $(s_{m+3},s_{m+2})$ and Bruhat walk $(w,ws_{m+3})$. By \Cref{Eq:14325PatternInequailities}, one can deduce that $(w,ws_{m+3})$ is compatible with $(s_{m+3},s_{m+2})$, and so by (ii) of 
\Cref{Lem:SpecialRDCompatibleSequence} we have
\[ \theta_{y}L(w)=\theta_{x}(\theta_{s_{m+3}}\theta_{s_{m+2}}L(w))\cong\theta_{x}\theta_{s_{m+3}}L(ws_{m+3}). \]
Since $(ws_{m+3})s_{m+3}<ws_{m+3}$, by \Cref{Prop:JantsenMiddleStructure} the module $\theta_{s_{m+3}}L(ws_{m+3})$ is indecomposable, of graded length three, has simple top and socle isomorphic to $L(ws_{m+3})$, and semi-simple Jantzen middle $\mathsf{J}_{s_{m+3}}(ws_{m+3})$. We know $\theta_{x}L(ws_{m+3})=0$, and we know that $L(w)$ appears as a summand of $\mathsf{J}_{s_{m+3}}(ws_{m+3})$ since $ws_{m+3}>w$ and $\mu(ws_{m+3},w)=1$. Therefore, we have
\[ \theta_{y}L(w)\cong\theta_{x}\theta_{s_{m+3}}L(ws_{m+3})\cong\theta_{x}\mathsf{J}_{s_{m+3}}(ws_{m+3})=\theta_{x}L(w)\oplus\theta_{x}M, \]
where $M$ is such that $\mathsf{J}_{s_{m+3}}(ws_{m+3})=L(w)\oplus M$. Lastly, since $|\mathsf{Sup}(y)|=4$, by \Cref{Cor:KMxSmallSup}, the module $\theta_{y}L(w)$ is zero or indecomposable. We know $\theta_{x}L(w)\neq 0$, and therefore
\[ \theta_{y}L(w)\cong\theta_{x}L(w)\oplus \theta_{x}M\neq 0. \]
Hence $\theta_{y}L(w)$ is indecomposable, which implies $\theta_{x}M=0$ and so $\theta_{y}L(w)\cong\theta_{x}L(w)$ as desired.

Lastly, suppose $w$ consecutively contains one of the patterns $\mathtt{15324}$, $\mathtt{25314}$, or $\mathtt{24315}$, at positions $m$ to $m+4$. Then, for the same $x:=s_{m+1}s_{m+2}s_{m}s_{m+1}$ and $y:=s_{m+2}s_{m+3}x$, one can prove
\[ \theta_{y}L(w)\cong\theta_{x}L(w)\neq 0\] 
in exactly the same manner as above. That is to say, all the Bruhat relations which allowed one to form the arguments above similarly hold for this $w$ too. Thus $w$ is also Kostant negative.
\end{proof}

We note that, for $1\leq n\leq4$, all Kostant negative $w\in \mathfrak{S}_{n}$ are accounted for by \Cref{Thm:PatternsKostantNegative}. 

\begin{cor}\label{Cor:PatternsKahrstromsConjecture}
Any involution $d\in\mathfrak{I}_{n}$ satisfies K{\aa}hrstr\"om's Conjecture (\Cref{Conj:KahrstromsConjecture}) whenever it consecutively contains any of the following as patterns:
\[ \mathtt{2143}, \hspace{2mm} \mathtt{3142}, \hspace{2mm} \mathtt{14325}, \hspace{2mm} \mathtt{15324}, \hspace{2mm} \mathtt{25314}, \hspace{2mm} \mathtt{24315}. \]
\end{cor}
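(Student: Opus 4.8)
The plan is to show that, for an involution $d\in\mathfrak{I}_{n}$ consecutively containing one of the listed patterns, the four truth values appearing in \eqref{Eq:KhConj}, namely $\mathsf{K}(d)$, $\mathsf{Kh}(d)$, $[\mathsf{Kh}](d)$ and $[\mathsf{Kh}^{\mathsf{ev}}](d)$, are \emph{all} equal to $\mathtt{false}$. This is exactly K{\aa}hrstr\"om's Conjecture for $d$: it forces statements (a)--(d) of \Cref{Conj:KahrstromsConjecture} to be simultaneously false, hence pairwise equivalent, and it is precisely the chain of equalities \eqref{Eq:KhConj}. That $\mathsf{K}(d)=\mathtt{false}$ is immediate from \Cref{Thm:PatternsKostantNegative}; the content lies in the other three, since a priori $\mathsf{K}(d)=\mathtt{false}$ might be caused solely by $\mathsf{KM}(\star,d)=\mathtt{false}$.

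The first step is to extract from the proof of \Cref{Thm:PatternsKostantNegative} the stronger fact that $\mathsf{Kh}(d)=\mathtt{false}$. Indeed, in each case that proof exhibits an explicit pair of distinct elements $x,y\in\mathfrak{S}_{n}$ (for $\mathtt{3142}$ one takes $x=s_{m}$ and $y=s_{m+2}s_{m+1}s_{m}$; for each of the length-five patterns one takes $x=s_{m+1}s_{m+2}s_{m}s_{m+1}$ and $y=s_{m+2}s_{m+3}x$; for $\mathtt{2143}$ one appeals to \cite[Proposition~5]{CM24}) satisfying $\theta_{x}L(d)\cong\theta_{y}L(d)\neq 0$ in $\mathcal{O}_{0}^{\mathbb{Z}}$, and such a pair directly witnesses $\mathsf{Kh}(d)=\mathtt{false}$.

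The second step is to push this witness through decategorification. Under the isomorphism $\mathsf{Gr}(\mathcal{O}_{0}^{\mathbb{Z}})\xrightarrow{\sim}\mathcal{H}_{n}$, and recalling that the $\mathcal{P}_{0}^{\mathbb{Z}}$-action decategorifies to right multiplication, one has $[\theta_{x}L(d)]=\underline{\hat{H}}_{d}\underline{H}_{x}$; thus $\theta_{x}L(d)\cong\theta_{y}L(d)$ yields $\underline{\hat{H}}_{d}\underline{H}_{x}=\underline{\hat{H}}_{d}\underline{H}_{y}$ in $\mathcal{H}_{n}$, and both sides are nonzero because a nonzero finite-length module in $\mathcal{O}_{0}^{\mathbb{Z}}$ has nonzero class. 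Hence this pair witnesses $[\mathsf{Kh}](d)=\mathtt{false}$. Applying $\mathsf{ev}$ — equivalently, using that the forgetful functor $\mathcal{O}_{0}^{\mathbb{Z}}\to\mathcal{O}_{0}$ decategorifies to $\mathsf{ev}$ — turns this into $\mathsf{ev}(\underline{\hat{H}}_{d}\underline{H}_{x})=\mathsf{ev}(\underline{\hat{H}}_{d}\underline{H}_{y})$ in $\mathbb{Z}\mathfrak{S}_{n}$, with each side equal to the class of the nonzero module $\theta_{x}L(d)$ in $\mathsf{Gr}(\mathcal{O}_{0})\cong\mathbb{Z}\mathfrak{S}_{n}$ and hence again nonzero. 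Thus $[\mathsf{Kh}^{\mathsf{ev}}](d)=\mathtt{false}$, and combining with $\mathsf{K}(d)=\mathsf{Kh}(d)=\mathtt{false}$ completes the argument.

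The only delicate point is keeping track of the non-vanishing statements along the two decategorification steps $\mathcal{O}_{0}^{\mathbb{Z}}\rightsquigarrow\mathcal{H}_{n}\rightsquigarrow\mathbb{Z}\mathfrak{S}_{n}$; this rests on the finiteness of length in $\mathcal{O}_{0}$ and $\mathcal{O}_{0}^{\mathbb{Z}}$, which guarantees that a nonzero module has nonzero image in the corresponding Grothendieck group. Everything else is a routine application of the decategorification dictionary assembled in \Cref{s2}.
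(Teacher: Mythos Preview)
Your proposal is correct and follows essentially the same approach as the paper: the paper's proof simply observes that the proof of \Cref{Thm:PatternsKostantNegative} already establishes $\mathsf{K}(d)=\mathsf{Kh}(d)=\mathtt{false}$ via an explicit witnessing pair $x\neq y$ with $\theta_{x}L(d)\cong\theta_{y}L(d)\neq 0$, and that this ``naturally implies'' $[\mathsf{Kh}](d)=[\mathsf{Kh}^{\mathsf{ev}}](d)=\mathtt{false}$. Your write-up just unpacks this ``naturally implies'' by carefully tracking the decategorification and the non-vanishing through $\mathsf{Gr}(\mathcal{O}_{0}^{\mathbb{Z}})\cong\mathcal{H}_{n}$ and $\mathsf{Gr}(\mathcal{O}_{0})\cong\mathbb{Z}\mathfrak{S}_{n}$, which is exactly what the paper leaves implicit.
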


\begin{proof}
Given such a $d$, it was shown in the proof of \Cref{Thm:PatternsKostantNegative} that $\mathsf{K}(d)=\mathsf{Kh}(d)=\mathtt{false}$,  and the equality $\mathsf{Kh}(d)=\mathtt{false}$ naturally implies $[\mathsf{Kh}](d)=\mathtt{false}$ and $[\mathsf{Kh}^{\mathsf{ev}}](d)=\mathtt{false}$.
\end{proof}

\section{Kostant's Problem for $A_{6}$}
\label{Sec:KostantsProblemA6}

In this section we answer Kostant's problem for $\mathfrak{sl}_{7}$. By \Cref{Thm:KLeftCellInv}, and the fact that each left cell contains a unique involution, it suffices to answer Kostant's problem for $\mathfrak{I}_{7}$. We have $|\mathfrak{I}_{7}|=232$ involutions in $\mathfrak{S}_{7}$, and thus cases to solve. We begin below by first recalling Kostant's problem for the smaller cases. We then employ the results of \cite{K10} to lift these smaller cases to $\mathfrak{sl}_{7}$. Doing this accounts for $161$ cases. Next, we recall the results of \cite{MMM24} which answers Kostant's problem for fully commutative elements. This accounts for an additional $29$ cases. We then use \Cref{Thm:PatternsKostantNegative} to obtain a further $25$ cases. Lastly, we have $17$ remaining cases ($11$ up to symmetry), which are treated with a case-by-case analysis.     

From here on, we use the notational short-hand $\mathsf{i}:=s_{i}\in S_{n}$. Hopefully this should causes no confusion with one-line notation as we are using different fonts (we will also stress when one-line notation is being used). Also, for any $1\leq j<i\leq n$, we will write $\mathsf{i}_{j}:=\mathsf{i(i-1)}\cdots\mathsf{j}\in\mathfrak{S}_{n}$, as this helps with conserving space. So for example, in $\mathfrak{S}_{7}$ we have
\[ s_{2}s_{3}s_{2}s_{4}s_{3}s_{2}s_{1}s_{5}s_{4}=\mathsf{232432154}=\mathsf{23_{2}4_{1}5_{4}}. \]
Lastly, when expressing an element of $\mathfrak{S}_{n}$ as a product of simple reflections in $S_{n}$, we will use the unique reduced expression which is minimal in the natural lexicographic ordering. 

\subsection{Kostant's Problem for Smaller Cases}
\label{SubSec:Kostant'sProblemforSmallerCases}

From \cite[Section 4]{KM10} we have the following:

\emph{Cases} $n=1,2,3$: $\mathsf{K}(w)=\mathtt{true}$ for all $w\in\mathfrak{S}_{n}$.

\emph{Case} $n=4$: $\mathsf{K}(w)=\mathtt{false}$ for $w\in\mathfrak{S}_{4}$ if and only if it is in the left cell of the involution $\mathsf{13}$.

\emph{Case} $n=5$: $\mathsf{K}(w)=\mathtt{false}$ for $w\in\mathfrak{S}_{5}$ if and only if it is in the left cell of one of the involutions
\[ \mathsf{13}, \hspace{3mm} \mathsf{24}, \hspace{3mm} \mathsf{23_{2}}, \hspace{3mm} \mathsf{12_{1}4}, \text{ or } \mathsf{134_{3}}. \]
From \cite[Section 10.1]{KMM23} we have the following:

\emph{Case} $n=6$: $\mathsf{K}(w)=\mathtt{false}$ for $w\in\mathfrak{S}_{6}$ if and only if it is in the left cell of any of the following involutions:
\vspace{-2mm}
\begin{table}[h]
\begin{tabular}{ccccc}
 $\mathsf{13}$ & $\mathsf{135}$ & $\mathsf{14_{3}5_{4}}$ & $\mathsf{123_{1}5}$ & $\mathsf{23_{2}45_{2}}$ \\
 $\mathsf{24}$ & $\mathsf{134_{3}}$ & $\mathsf{2_{1}3_{2}5}$ & $\mathsf{134_{3}5_{3}}$ & $\mathsf{13_{1}45_{3}}$  \\
 $\mathsf{35}$ & $\mathsf{12_{1}4}$ & $\mathsf{23_{2}4_{2}}$ & $\mathsf{2_{1}4_{2}5_{4}}$ & $\mathsf{2_{1}3_{1}45_{2}}$  \\
 $\mathsf{23_{2}}$ & $\mathsf{245_{4}}$ & $\mathsf{1345_{3}}$ & $\mathsf{12_{1}3_{1}5}$ & $\mathsf{13_{2}4_{1}5_{3}}$  \\
 $\mathsf{34_{3}}$ & $\mathsf{23_{2}5}$ & $\mathsf{12_{1}45_{4}}$ & $\mathsf{123_{2}4_{1}}$ & $\mathsf{12_{1}34_{3}5_{1}}$
\end{tabular}
\end{table}
\vspace{-7mm}

\subsection{Parabolic Lifts of Smaller Cases}

Recall the set-up from \Cref{SubSec:CategoriesOl}, letting $I\subset S_{n}$ and $\mathfrak{l}\subset\mathfrak{sl}_{n}$ the corresponding semi-simple Levi factor of the induced parabolic.

\begin{lem}\label{Lem:KForAProducts}
For any $w\in\mathfrak{S}_{n}(I)$, we have that
\[ \mathsf{K}_{\mathfrak{l}}(w)=\mathsf{K}(w_{1})\wedge\cdots\wedge\mathsf{K}(w_{k}). \]
\end{lem}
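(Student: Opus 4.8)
The plan is to reduce the statement to the already-recorded fact that Kostant's problem, and its equivalent reformulation via $\mathsf{Kh}$ and $\mathsf{KM}$, decompose along a product of Lie algebras. Recall from \Cref{SubSec:CategoriesOl} that $\mathfrak{l}\cong\mathfrak{sl}_{n_1}\times\cdots\times\mathfrak{sl}_{n_k}$, that $\phi$ identifies $w\in\mathfrak{S}_n(I)$ with $(w_1,\dots,w_k)$, and that the equivalences $\mathcal{F}$ and $\mathcal{G}$ send $L_{\mathfrak{l}}(w)$ to $L(w_1)\boxtimes\cdots\boxtimes L(w_k)$ and $\theta_w$ to $\theta_{w_1}\boxtimes\cdots\boxtimes\theta_{w_k}$, compatibly. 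So first I would invoke \Cref{Thm:KiffKhKM} (its $\mathfrak{l}$-counterpart, which the text records as $\mathsf{K}_{\mathfrak{l}}(w)=\mathsf{Kh}_{\mathfrak{l}}(w)\wedge\mathsf{KM}_{\mathfrak{l}}(\star,w)$) together with the same reformulation in each factor $\mathfrak{sl}_{n_i}$, reducing everything to showing
\[
\mathsf{Kh}_{\mathfrak{l}}(w)\wedge\mathsf{KM}_{\mathfrak{l}}(\star,w)
=\bigwedge_{i=1}^{k}\bigl(\mathsf{Kh}(w_i)\wedge\mathsf{KM}(\star,w_i)\bigr).
\]

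The second step is to prove the two conjunctands separately. For the $\mathsf{KM}$ part: a module $\theta_{x}L_{\mathfrak{l}}(y)$ in $\mathcal{O}_0^{\mathbb{Z}}(\mathfrak{l})$ corresponds under $\mathcal{F}$ to $\theta_{x_1}L(y_1)\boxtimes\cdots\boxtimes\theta_{x_k}L(y_k)$, and an external tensor product of modules over (Koszul, hence in particular finite-dimensional local-endomorphism-admitting) algebras is indecomposable iff each tensor factor is either zero or indecomposable, and is nonzero iff all factors are nonzero. Hence $\mathsf{KM}_{\mathfrak{l}}(x,y)$ holds for all $x$ iff $\mathsf{KM}(x_i,y_i)$ holds for all $x_i$ and all $i$, i.e.\ $\mathsf{KM}_{\mathfrak{l}}(\star,w)=\bigwedge_i\mathsf{KM}(\star,w_i)$ (using $\phi$ to range over all of $\mathfrak{S}_n(I)$ factor-by-factor). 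For the $\mathsf{Kh}$ part: $\theta_x L_{\mathfrak{l}}(w)\cong\theta_y L_{\mathfrak{l}}(w)$ with both nonzero is equivalent, via $\mathcal{F}$, to $\theta_{x_i}L(w_i)\cong\theta_{y_i}L(w_i)$ (all nonzero) for every $i$; so a coincidence of two distinct projective functors on $L_{\mathfrak{l}}(w)$ occurs iff such a coincidence occurs in at least one factor. Thus $\mathsf{Kh}_{\mathfrak{l}}(w)=\bigwedge_i\mathsf{Kh}(w_i)$. Combining the two displays and regrouping the conjunction gives exactly $\mathsf{K}_{\mathfrak{l}}(w)=\mathsf{K}(w_1)\wedge\cdots\wedge\mathsf{K}(w_k)$.

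The main obstacle is the bookkeeping in the "iff each factor" step: one must be careful that when $x$ ranges over all of $\mathfrak{S}_n(I)$ (equivalently $(x_1,\dots,x_k)$ over $\mathfrak{S}_{n_1}\times\cdots\times\mathfrak{S}_{n_k}$), a failure of $\mathsf{Kh}$ or $\mathsf{KM}$ in a single factor — with the other coordinates held fixed (e.g.\ equal to $x_j=y_j$) — already produces a failure for $\mathfrak{l}$, and conversely any $\mathfrak{l}$-failure, read coordinatewise, forces a failure in some factor. This is routine but is the one place where the external-tensor-product formalism of $\mathcal{F},\mathcal{G}$ must genuinely be used rather than just quoted; once it is in place, the remaining manipulations are purely propositional. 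Alternatively, one could bypass $\mathsf{Kh}$ and $\mathsf{KM}$ entirely and argue directly that $\mathcal{L}(M_1\boxtimes\cdots\boxtimes M_k,M_1\boxtimes\cdots\boxtimes M_k)$ for the Levi $\mathfrak{l}$ factors as a tensor product of the $\mathcal{L}(M_i,M_i)$ and that $\mathcal{U}(\mathfrak{l})=\mathcal{U}(\mathfrak{sl}_{n_1})\otimes\cdots\otimes\mathcal{U}(\mathfrak{sl}_{n_k})$ surjects onto it iff each factor map is surjective; I would mention this as the conceptually cleaner route, but the $\mathsf{Kh}/\mathsf{KM}$ route is preferable here since those objects are exactly what the rest of the paper computes with.
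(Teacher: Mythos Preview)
Your proposal is correct and follows essentially the same route as the paper: reduce via $\mathsf{K}=\mathsf{Kh}\wedge\mathsf{KM}$ to showing $\mathsf{KM}_{\mathfrak{l}}(\star,w)=\bigwedge_i\mathsf{KM}(\star,w_i)$ and $\mathsf{Kh}_{\mathfrak{l}}(w)=\bigwedge_i\mathsf{Kh}(w_i)$, and then verify both using the compatibility of $\mathcal{F}$ and $\mathcal{G}$ with the external tensor product. The paper carries out the two directions of the $\mathsf{Kh}$ equivalence exactly as you outline (fixing the other coordinates at the identity for one direction, and reading off a factor with $x_i\neq y_i$ for the other), so there is nothing materially different between your argument and theirs.
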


\begin{proof}
Clearly $\theta_{x}L_{\mathfrak{l}}(w)$ is indecomposable if and only if $\mathcal{F}(\theta_{x}L_{\mathfrak{l}}(w))=\theta_{x_{1}}L(w_{1})\boxtimes\cdots\boxtimes\theta_{x_{k}}L(w_{k})$ is indecomposable if and only if $\theta_{x_{i}}L(w_{i})$ is indecomposable for each $1\leq i\leq k$. Thus, it suffices to show that $\mathsf{Kh}_{\mathfrak{l}}(w)=\mathsf{Kh}(w_{1})\wedge\cdots\wedge\mathsf{Kh}(w_{k})$. Firstly, suppose $\mathsf{Kh}(w_{i})=\mathtt{false}$ for some $i\in[k]$. Then there exists distinct elements $x,y\in\mathfrak{S}_{n_{i}}$ where $\theta_{x}L(w_{i}), \theta_{y}L(w_{i})\neq 0$ and $\theta_{x}L(w_{i})\cong\theta_{y}L(w_{i})\neq0$. Therefore, we must have that
\begin{align*}
\mathcal{F}(\theta_{\phi^{-1}(x)}L_{\mathfrak{l}}(w))&=L(w_{1})\boxtimes\cdots\boxtimes L(w_{i-1})\boxtimes\theta_{x}L(w_{i})\boxtimes L(w_{i+1})\boxtimes\cdots\boxtimes L(w_{k}) \\
&\cong L(w_{1})\boxtimes\cdots\boxtimes L(w_{i-1})\boxtimes\theta_{y}L(w_{i})\boxtimes L(w_{i+1})\boxtimes\cdots\boxtimes L(w_{k}) \\
&=\mathcal{F}(\theta_{\phi^{-1}(y)}L_{\mathfrak{l}}(w)).
\end{align*}
So $\theta_{\phi^{-1}(x)}L_{\mathfrak{l}}(w)\cong\theta_{\phi^{-1}(y)}L_{\mathfrak{l}}(w)\neq0$ which gives $\mathsf{Kh}_{\mathfrak{l}}(w)=\mathtt{false}$. Now suppose $\mathsf{Kh}_{\mathfrak{l}}(w)=\mathtt{false}$, so there exists distinct elements $x,y\in\mathfrak{S}_{n}(I)$ where $\theta_{x}L_{\mathfrak{l}}(w), \theta_{y}L_{\mathfrak{l}}(w)\neq 0$  and $\theta_{x}L_{\mathfrak{l}}(w)\cong\theta_{y}L_{\mathfrak{l}}(w)$. Hence $\mathcal{F}(\theta_{x}L_{\mathfrak{l}}(w))\cong\mathcal{F}(\theta_{y}L_{\mathfrak{l}}(w))\neq 0$ which implies $\theta_{x_{i}}L(w_{i})\cong\theta_{y_{i}}L(w_{i})\neq 0$ for each $1\leq i\leq k$. Since $x$ and $y$ are distinct, some pair $x_{i}$ and $y_{i}$ are distinct, and thus $\mathsf{Kh}(w_{i})=\mathtt{false}$.
\end{proof}

With the above lemma and the results of \Cref{SubSec:Kostant'sProblemforSmallerCases}, we have an answer to Kostant's problem for each $\mathfrak{l}\subset\mathfrak{sl}_{7}$. Recalling \Cref{Thm:ParaLiftK}, we know $\mathsf{K}_{\mathfrak{l}}(w)=\mathsf{K}(ww_{0}^{I}w_{0})$ for any $w\in\mathfrak{S}_{n}(I)$. This allows us to lift the answer of Kostant's problem for all the smaller cases of $\mathfrak{l}\subset\mathfrak{sl}_{7}$ to answers of $\mathfrak{sl}_{7}$ itself. We list all the cases obtained in this manner in \Cref{Table:1} below. Explicitly, we have listed all $d\in\mathfrak{I}_{7}$ such that there exists $I\subset S_{7}$ and $w\in\mathfrak{S}_{7}(I)$ where $d\sim_{L}ww_{0}^{I}w_{0}$, alongside the truth value $\mathsf{K}(d)$, which by \Cref{Thm:ParaLiftK} and \Cref{Thm:KLeftCellInv}, is equal to $\mathsf{K}_{\mathfrak{l}}(w)$. In general there are many such $I\subset S_{7}$ and $w\in\mathfrak{S}_{7}(I)$ where $d\sim_{L}ww_{0}^{I}w_{0}$, but we only give a single example for each $d$.

{\footnotesize
\begin{longtblr}[
  caption = {\normalsize{All $d\in\mathfrak{I}_{7}$ where $d\sim_{L}ww_{0}^{I}w_{0}$ for some $I\subset S_{7}$ and $w\in\mathfrak{S}_{7}(I)$.}},
  label = {Table:1},
]{
  colspec = {|cccc|cccc|},
  rowhead = 1,
  row{even} = {gray9},
  row{1} = {teal9},
}
\hline
$d$ & $I$ & $w$ & $\mathsf{K}_{\mathfrak{l}}(w)$ & $d$ & $I$ & $w$ & $\mathsf{K}_{\mathfrak{l}}(w)$ \\ \hline

$\mathsf{1}$ & $\{1,2,3,4,5\}$ & $\mathsf{e}$ & $\mathtt{true}$ 
& $\mathsf{12_{1}}$ & $\{1,2,3,4\}$ & $\mathsf{e}$ & $\mathtt{true}$ \\ 

$\mathsf{12_{1}3_{1}}$ & $\{1,2,3\}$ & $\mathsf{e}$ & $\mathtt{true}$
& $\mathsf{12_{1}3_{1}4_{1}}$ & $\{1,2\}$ & $\mathsf{e}$ & $\mathtt{true}$ \\

$\mathsf{12_{1}3_{1}4_{1}5_{1}}$ & $\{1\}$ & $\mathsf{e}$ & $\mathtt{true}$
& $\mathsf{12_{1}3_{1}4_{1}5_{1}6_{1}}$ & $\{1\}$ & $\mathsf{1}$ & $\mathtt{true}$ \\

$\mathsf{12_{1}3_{1}4_{1}56_{1}}$ & $\{2\}$ & $\mathsf{e}$ & $\mathtt{true}$
& $\mathsf{12_{1}3_{1}4_{1}6}$ & $\{1,2,3\}$ & $\mathsf{13}$ & $\mathtt{false}$ \\

$\mathsf{12_{1}3_{1}4_{3}5_{1}6_{1}}$ & $\{3\}$ & $\mathsf{e}$ & $\mathtt{true}$
& $\mathsf{12_{1}3_{1}45_{1}}$ & $\{1,2,3\}$ & $\mathsf{2}$ & $\mathtt{true}$ \\

$\mathsf{12_{1}3_{1}45_{4}6_{1}}$ & $\{2,3,4\}$ & $\mathsf{24}$ & $\mathtt{false}$
& $\mathsf{12_{1}3_{1}456_{1}}$ & $\{2,3\}$ & $\mathsf{e}$ & $\mathtt{true}$ \\

$\mathsf{12_{1}3_{1}5}$ & $\{1,2,3,4\}$ & $\mathsf{13}$ & $\mathtt{false}$
& $\mathsf{12_{1}3_{1}5_{1}6_{5}}$ & $\{1,3\}$ & $\mathsf{e}$ & $\mathtt{true}$ \\

$\mathsf{12_{1}3_{1}56_{5}}$ & $\{1,2,3,4,5\}$ & $\mathsf{12_{1}45_{4}}$ & $\mathtt{false}$
& $\mathsf{12_{1}3_{1}6}$ & $\{1,2,3,4\}$ & $\mathsf{14}$ & $\mathtt{true}$ \\

$\mathsf{12_{1}3_{2}4_{1}5_{1}}$ & $\{1,2,3,4\}$ & $\mathsf{23_{2}}$ & $\mathtt{false}$
& $\mathsf{12_{1}3_{2}4_{1}56_{1}}$ & $\{2,3,4\}$ & $\mathsf{3}$ & $\mathtt{true}$ \\

$\mathsf{12_{1}3_{2}4_{2}5_{1}6_{1}}$ & $\{4\}$ & $\mathsf{e}$ & $\mathtt{true}$
& $\mathsf{12_{1}3_{2}4_{3}5_{1}6_{1}}$ & $\{3,4\}$ & $\mathsf{e}$ & $\mathtt{true}$ \\

$\mathsf{12_{1}34_{1}}$ & $\{1,2,3,4\}$ & $\mathsf{2}$ & $\mathtt{true}$
& $\mathsf{12_{1}34_{1}5_{3}6_{1}}$ & $\{2,4\}$ & $\mathsf{e}$ & $\mathtt{true}$ \\

$\mathsf{12_{1}34_{1}6}$ & $\{1,2,3,4\}$ & $\mathsf{24}$ & $\mathtt{false}$
& $\mathsf{12_{1}34_{3}5_{1}}$ & $\{1,2,3,4,5\}$ & $\mathsf{1343}$ & $\mathtt{false}$ \\

$\mathsf{12_{1}34_{3}5_{3}6_{1}}$ & $\{3,4,5\}$ & $\mathsf{35}$ & $\mathtt{false}$
& $\mathsf{12_{1}34_{3}56_{1}}$ & $\{2,3,4,5\}$ & $\mathsf{24}$ & $\mathtt{false}$ \\

$\mathsf{12_{1}345_{1}}$ & $\{1,2,3,4\}$ & $\mathsf{3}$ & $\mathtt{true}$
& $\mathsf{12_{1}345_{4}6_{1}}$ & $\{2,3,4,5\}$ & $\mathsf{25}$ & $\mathtt{true}$ \\

$\mathsf{12_{1}3456_{1}}$ & $\{2,3,4\}$ & $\mathsf{e}$ & $\mathtt{true}$
& $\mathsf{12_{1}35_{1}6_{5}}$ & $\{1,3,4\}$ & $\mathsf{e}$ & $\mathtt{true}$ \\

$\mathsf{12_{1}4}$ & $\{1,2,3,4,5\}$ & $\mathsf{13}$ & $\mathtt{false}$
& $\mathsf{12_{1}4_{1}5_{4}}$ & $\{1,2,4\}$ & $\mathsf{e}$ & $\mathtt{true}$ \\

$\mathsf{12_{1}4_{1}5_{1}6_{4}}$ & $\{1,4\}$ & $\mathsf{e}$ & $\mathtt{true}$
& $\mathsf{12_{1}4_{1}56_{4}}$ & $\{1,2,3,4,5\}$ & $\mathsf{12_{1}45_{3}}$ & $\mathtt{false}$ \\

$\mathsf{12_{1}4_{3}5_{1}6_{4}}$ & $\{1,3,4,5\}$ & $\mathsf{35}$ & $\mathtt{false}$
& $\mathsf{12_{1}45_{4}6_{4}}$ & $\{2,3,4,5,6\}$ & $\mathsf{23_{2}56_{5}}$ & $\mathtt{false}$ \\

$\mathsf{12_{1}46}$ & $\{1,2,3,4,5\}$ & $\mathsf{135}$ & $\mathtt{false}$
& $\mathsf{12_{1}5}$ & $\{1,2,3,4,5\}$ & $\mathsf{14}$ & $\mathtt{true}$ \\

$\mathsf{12_{1}5_{4}6_{5}}$ & $\{1,2,3,4,5\}$ & $\mathsf{135_{4}}$ & $\mathtt{false}$
& $\mathsf{12_{1}6}$ & $\{1,2,3,4,5\}$ & $\mathsf{15}$ & $\mathtt{true}$ \\

$\mathsf{123_{1}}$ & $\{1,2,3,4,5\}$ & $\mathsf{2}$ & $\mathtt{true}$
& $\mathsf{123_{1}4_{1}5_{2}6_{1}}$ & $\{2,5\}$ & $\mathsf{e}$ & $\mathtt{true}$ \\

$\mathsf{123_{1}4_{2}5_{1}}$ & $\{1,2,3,5\}$ & $\mathsf{2}$ & $\mathtt{true}$
& $\mathsf{123_{1}4_{2}56_{1}}$ & $\{2,3,5\}$ & $\mathsf{e}$ & $\mathtt{true}$ \\

$\mathsf{123_{1}4_{3}5_{2}6_{1}}$ & $\{3,5\}$ & $\mathsf{e}$ & $\mathtt{true}$
& $\mathsf{123_{1}5}$ & $\{1,2,3,4,5\}$ & $\mathsf{24}$ & $\mathtt{false}$ \\

$\mathsf{123_{1}6}$ & $\{1,2,3,4,5\}$ & $\mathsf{25}$ & $\mathtt{true}$
& $\mathsf{123_{2}4_{1}}$ & $\{1,2,3,4,5\}$ & $\mathsf{23_{2}}$ & $\mathtt{false}$ \\

$\mathsf{123_{2}4_{1}6}$ & $\{1,2,3,4,5\}$ & $\mathsf{23_{2}5}$ & $\mathtt{false}$
& $\mathsf{123_{2}4_{2}5_{1}}$ & $\{1,2,3,4,5\}$ & $\mathsf{23_{2}4_{2}}$ & $\mathtt{false}$ \\

$\mathsf{123_{2}4_{2}5_{2}6_{1}}$ & $\{5\}$ & $\mathsf{e}$ & $\mathtt{true}$
& $\mathsf{123_{2}4_{2}56_{1}}$ & $\{2,3,4,5\}$ & $\mathsf{34_{3}}$ & $\mathtt{false}$ \\

$\mathsf{123_{2}4_{3}5_{2}6_{1}}$ & $\{3,4,5\}$ & $\mathsf{4}$ & $\mathtt{true}$
& $\mathsf{123_{2}45_{1}}$ & $\{1,2,3,4,5\}$ & $\mathsf{34_{3}2}$ & $\mathtt{true}$ \\

$\mathsf{123_{2}45_{4}6_{1}}$ & $\{2,3,4,5\}$ & $\mathsf{35}$ & $\mathtt{false}$
& $\mathsf{123_{2}456_{1}}$ & $\{2,3,4,5\}$ & $\mathsf{3}$ & $\mathtt{true}$ \\

$\mathsf{123_{2}5_{1}6_{5}}$ & $\{1,3,4,5\}$ & $\mathsf{4}$ & $\mathtt{true}$
& $\mathsf{1234_{1}}$ & $\{1,2,3,4,5\}$ & $\mathsf{3}$ & $\mathtt{true}$ \\

$\mathsf{1234_{1}6}$ & $\{1,2,3,4,5\}$ & $\mathsf{35}$ & $\mathtt{false}$
& $\mathsf{1234_{2}5_{3}6_{1}}$ & $\{2,4,5\}$ & $\mathsf{e}$ & $\mathtt{true}$ \\

$\mathsf{1234_{3}5_{1}}$ & $\{1,2,3,4,5\}$ & $\mathsf{34_{3}}$ & $\mathtt{false}$
& $\mathsf{1234_{3}5_{3}6_{1}}$ & $\{4,5\}$ & $\mathsf{e}$ & $\mathtt{true}$ \\

$\mathsf{1234_{3}56_{1}}$ & $\{2,3,4,5\}$ & $\mathsf{4}$ & $\mathtt{true}$
& $\mathsf{12345_{1}}$ & $\{1,2,3,4,5\}$ & $\mathsf{4}$ & $\mathtt{true}$ \\

$\mathsf{12345_{4}6_{1}}$ & $\{3,4,5\}$ & $\mathsf{e}$ & $\mathtt{true}$
& $\mathsf{123456_{1}}$ & $\{2,3,4,5\}$ & $\mathsf{e}$ & $\mathtt{true}$ \\

$\mathsf{1235_{1}6_{5}}$ & $\{1,3,4,5\}$ & $\mathsf{e}$ & $\mathtt{true}$
& $\mathsf{124_{1}5_{4}}$ & $\{1,2,3,4,5\}$ & $\mathsf{24_{3}}$ & $\mathtt{true}$ \\

$\mathsf{124_{2}5_{1}6_{4}}$ & $\{1,4,5\}$ & $\mathsf{e}$ & $\mathtt{true}$
& $\mathsf{124_{3}5_{1}6_{4}}$ & $\{1,3,4,5,6\}$ & $\mathsf{36}$ & $\mathtt{true}$ \\

$\mathsf{13_{1}4_{3}}$ & $\{1,2,3,5\}$ & $\mathsf{e}$ & $\mathtt{true}$
& $\mathsf{13_{1}4_{1}5_{3}}$ & $\{1,2,5\}$ & $\mathsf{e}$ & $\mathtt{true}$ \\

$\mathsf{13_{1}4_{1}5_{1}6_{3}}$ & $\{1,5\}$ & $\mathsf{e}$ & $\mathtt{true}$
& $\mathsf{13_{1}4_{1}56_{3}}$ & $\{1,2,3,5\}$ & $\mathsf{13}$ & $\mathtt{false}$ \\

$\mathsf{13_{1}4_{3}5_{1}6_{3}}$ & $\{1,3,5\}$ & $\mathsf{e}$ & $\mathtt{true}$
& $\mathsf{13_{1}4_{3}6}$ & $\{1,2,3,4,5\}$ & $\mathsf{13_{2}5}$ & $\mathtt{false}$ \\

$\mathsf{13_{1}45_{4}6_{3}}$ & $\{2,3,4,5,6\}$ & $\mathsf{23_{2}56_{4}}$ & $\mathtt{false}$
& $\mathsf{13_{1}5_{3}6_{5}}$ & $\{1,2,3,4,5\}$ & $\mathsf{13_{2}5_{4}}$ & $\mathtt{false}$ \\

$\mathsf{13_{2}4_{1}5_{3}}$ & $\{1,2,4,5,6\}$ & $\mathsf{46}$ & $\mathtt{false}$
& $\mathsf{13_{2}4_{2}5_{1}6_{3}}$ & $\{1,4,5,6\}$ & $\mathsf{46}$ & $\mathtt{false}$ \\

$\mathsf{13_{2}4_{3}5_{1}6_{3}}$ & $\{1,3,4,5,6\}$ & $\mathsf{35}$ & $\mathtt{false}$
& $\mathsf{134_{1}5_{3}6_{3}}$ & $\{2,4,5,6\}$ & $\mathsf{46}$ & $\mathtt{false}$ \\

$\mathsf{134_{3}5_{3}6_{3}}$ & $\{4,5,6\}$ & $\mathsf{46}$ & $\mathtt{false}$
& $\mathsf{134_{3}56_{3}}$ & $\{2,3,4,5,6\}$ & $\mathsf{245_{4}}$ & $\mathtt{false}$ \\

$\mathsf{1345_{4}6_{3}}$ & $\{3,4,5,6\}$ & $\mathsf{35}$ & $\mathtt{false}$
& $\mathsf{13456_{3}}$ & $\{2,3,4,5,6\}$ & $\mathsf{24}$ & $\mathtt{false}$ \\

$\mathsf{1356_{5}}$ & $\{2,3,4,5,6\}$ & $\mathsf{246}$ & $\mathtt{false}$
& $\mathsf{14_{1}5_{3}6_{4}}$ & $\{1,2,4,5\}$ & $\mathsf{e}$ & $\mathtt{true}$ \\

$\mathsf{14_{3}56_{4}}$ & $\{2,3,4,5,6\}$ & $\mathsf{246_{5}}$ & $\mathtt{false}$
& $\mathsf{145_{4}6_{4}}$ & $\{3,4,5,6\}$ & $\mathsf{36}$ & $\mathtt{true}$ \\

$\mathsf{1456_{4}}$ & $\{2,3,4,5,6\}$ & $\mathsf{25}$ & $\mathtt{true}$
& $\mathsf{156_{5}}$ & $\{2,3,4,5,6\}$ & $\mathsf{26}$ & $\mathtt{true}$ \\

$\mathsf{2_{1}3_{2}}$ & $\{1,2,3,4,6\}$ & $\mathsf{e}$ & $\mathtt{true}$
& $\mathsf{2_{1}3_{1}4_{2}}$ & $\{1,2,3,6\}$ & $\mathsf{e}$ & $\mathtt{true}$ \\

$\mathsf{2_{1}3_{1}4_{1}5_{2}}$ & $\{1,2,6\}$ & $\mathsf{e}$ & $\mathtt{true}$
& $\mathsf{2_{1}3_{1}4_{1}5_{1}6_{2}}$ & $\{1,6\}$ & $\mathsf{e}$ & $\mathtt{true}$ \\

$\mathsf{2_{1}3_{1}4_{1}56_{2}}$ & $\{1,2,3,6\}$ & $\mathsf{13}$ & $\mathtt{false}$
& $\mathsf{2_{1}3_{1}4_{3}5_{1}6_{2}}$ & $\{1,3,6\}$ & $\mathsf{e}$ & $\mathtt{true}$ \\

$\mathsf{2_{1}3_{1}45_{2}}$ & $\{1,2,3,4,6\}$ & $\mathsf{13}$ & $\mathtt{false}$
& $\mathsf{2_{1}3_{1}45_{4}6_{2}}$ & $\{2,3,4,6\}$ & $\mathsf{24}$ & $\mathtt{false}$ \\

$\mathsf{2_{1}3_{1}456_{2}}$ & $\{1,2,3,4,6\}$ & $\mathsf{14}$ & $\mathtt{true}$
& $\mathsf{2_{1}3_{2}4_{1}5_{2}}$ & $\{1,2,4,6\}$ & $\mathsf{e}$ & $\mathtt{true}$ \\

$\mathsf{2_{1}3_{2}4_{1}56_{2}}$ & $\{1,2,3,4,6\}$ & $\mathsf{24}$ & $\mathtt{false}$
& $\mathsf{2_{1}3_{2}4_{2}5_{1}6_{2}}$ & $\{1,4,6\}$ & $\mathsf{e}$ & $\mathtt{true}$ \\

$\mathsf{2_{1}3_{2}4_{3}5_{1}6_{2}}$ & $\{1,3,4,6\}$ & $\mathsf{e}$ & $\mathtt{true}$
& $\mathsf{2_{1}3_{2}56_{5}}$ & $\{2,3,4,5,6\}$ & $\mathsf{24_{3}6}$ & $\mathtt{false}$ \\

$\mathsf{2_{1}34_{2}}$ & $\{1,2,3,4,6\}$ & $\mathsf{2}$ & $\mathtt{true}$
& $\mathsf{2_{1}34_{1}5_{3}6_{2}}$ & $\{2,4,6\}$ & $\mathsf{e}$ & $\mathtt{true}$ \\

$\mathsf{2_{1}34_{3}5_{2}}$ & $\{1,2,3,4,6\}$ & $\mathsf{23_{2}}$ & $\mathtt{false}$
& $\mathsf{2_{1}34_{3}5_{3}6_{2}}$ & $\{4,6\}$ & $\mathsf{e}$ & $\mathtt{true}$ \\

$\mathsf{2_{1}34_{3}56_{2}}$ & $\{2,3,4,6\}$ & $\mathsf{3}$ & $\mathtt{true}$
& $\mathsf{2_{1}345_{2}}$ & $\{1,2,3,4,6\}$ & $\mathsf{3}$ & $\mathtt{true}$ \\

$\mathsf{2_{1}345_{4}6_{2}}$ & $\{3,4,6\}$ & $\mathsf{e}$ & $\mathtt{true}$
& $\mathsf{2_{1}3456_{2}}$ & $\{2,3,4,6\}$ & $\mathsf{e}$ & $\mathtt{true}$ \\

$\mathsf{2_{1}4_{1}5_{2}6_{4}}$ & $\{1,2,4,5,6\}$ & $\mathsf{5}$ & $\mathtt{true}$
& $\mathsf{2_{1}4_{2}56_{4}}$ & $\{2,3,4,5,6\}$ & $\mathsf{24_{3}6_{5}}$ & $\mathtt{false}$ \\

$\mathsf{23_{1}4_{1}5_{2}6_{2}}$ & $\{2,6\}$ & $\mathsf{e}$ & $\mathtt{true}$
& $\mathsf{23_{1}4_{2}5_{2}}$ & $\{1,2,3,6\}$ & $\mathsf{2}$ & $\mathtt{true}$ \\

$\mathsf{23_{1}4_{2}56_{2}}$ & $\{2,3,6\}$ & $\mathsf{e}$ & $\mathtt{true}$
& $\mathsf{23_{1}4_{3}5_{2}6_{2}}$ & $\{3,6\}$ & $\mathsf{e}$ & $\mathtt{true}$ \\

$\mathsf{23_{2}4_{2}5_{2}6_{2}}$ & $\{6\}$ & $\mathsf{e}$ & $\mathtt{true}$
& $\mathsf{23_{2}4_{2}56_{2}}$ & $\{2,3,4,5,6\}$ & $\mathsf{34_{3}5_{3}}$ & $\mathtt{false}$ \\

$\mathsf{23_{2}4_{3}5_{2}6_{2}}$ & $\{3,4,5,6\}$ & $\mathsf{45_{4}}$ & $\mathtt{false}$
& $\mathsf{23_{2}45_{4}6_{2}}$ & $\{2,3,4,5,6\}$ & $\mathsf{34_{3}6}$ & $\mathtt{false}$ \\

$\mathsf{23_{2}456_{2}}$ & $\{2,3,4,5,6\}$ & $\mathsf{34_{3}}$ & $\mathtt{false}$
& $\mathsf{23_{2}5_{2}6_{5}}$ & $\{1,3,4,5,6\}$ & $\mathsf{45_{4}}$ & $\mathtt{false}$ \\

$\mathsf{234_{2}5_{3}6_{2}}$ & $\{2,4,5,6\}$ & $\mathsf{5}$ & $\mathtt{true}$
& $\mathsf{234_{3}5_{3}6_{2}}$ & $\{4,5,6\}$ & $\mathsf{5}$ & $\mathtt{true}$ \\

$\mathsf{234_{3}56_{2}}$ & $\{2,3,4,5,6\}$ & $\mathsf{45_{3}}$ & $\mathtt{true}$
& $\mathsf{2345_{4}6_{2}}$ & $\{3,4,5,6\}$ & $\mathsf{4}$ & $\mathtt{true}$ \\

$\mathsf{23456_{2}}$ & $\{2,3,4,5,6\}$ & $\mathsf{3}$ & $\mathtt{true}$
& $\mathsf{235_{2}6_{5}}$ & $\{1,3,4,5,6\}$ & $\mathsf{4}$ & $\mathtt{true}$ \\

$\mathsf{24_{2}5_{2}6_{4}}$ & $\{1,4,5,6\}$ & $\mathsf{5}$ & $\mathtt{true}$
& $\mathsf{24_{3}5_{2}6_{4}}$ & $\{1,3,4,5,6\}$ & $\mathsf{46}$ & $\mathtt{false}$ \\

$\mathsf{245_{4}6_{4}}$ & $\{3,4,5,6\}$ & $\mathsf{46}$ & $\mathtt{false}$
& $\mathsf{2456_{4}}$ & $\{2,3,4,5,6\}$ & $\mathsf{35}$ & $\mathtt{false}$ \\

$\mathsf{256_{5}}$ & $\{2,3,4,5,6\}$ & $\mathsf{36}$ & $\mathtt{true}$
& $\mathsf{3_{1}4_{1}5_{2}6_{3}}$ & $\{1,2,5,6\}$ & $\mathsf{e}$ & $\mathtt{true}$ \\

$\mathsf{3_{1}4_{2}5_{3}}$ & $\{1,2,3,5,6\}$ & $\mathsf{e}$ & $\mathtt{true}$
& $\mathsf{3_{1}4_{2}56_{3}}$ & $\{2,3,5,6\}$ & $\mathsf{e}$ & $\mathtt{true}$ \\

$\mathsf{3_{1}4_{3}5_{2}6_{3}}$ & $\{1,2,3,5,6\}$ & $\mathsf{2}$ & $\mathtt{true}$
& $\mathsf{3_{2}4_{2}5_{2}6_{3}}$ & $\{1,5,6\}$ & $\mathsf{e}$ & $\mathtt{true}$ \\

$\mathsf{3_{2}4_{2}56_{3}}$ & $\{1,2,3,5,6\}$ & $\mathsf{13}$ & $\mathtt{false}$
& $\mathsf{3_{2}4_{3}5_{2}6_{3}}$ & $\{1,3,5,6\}$ & $\mathsf{e}$ & $\mathtt{true}$ \\

$\mathsf{3_{2}45_{4}6_{3}}$ & $\{3,5,6\}$ & $\mathsf{e}$ & $\mathtt{true}$
& $\mathsf{3_{2}456_{3}}$ & $\{2,3,4,5,6\}$ & $\mathsf{354}$ & $\mathtt{true}$ \\

$\mathsf{34_{2}5_{3}6_{3}}$ & $\{2,5,6\}$ & $\mathsf{e}$ & $\mathtt{true}$
& $\mathsf{34_{3}5_{3}6_{3}}$ & $\{5,6\}$ & $\mathsf{e}$ & $\mathtt{true}$ \\

$\mathsf{34_{3}56_{3}}$ & $\{2,3,4,5,6\}$ & $\mathsf{45_{4}}$ & $\mathtt{false}$
& $\mathsf{345_{4}6_{3}}$ & $\{3,4,5,6\}$ & $\mathsf{5}$ & $\mathtt{true}$ \\

$\mathsf{3456_{3}}$ & $\{2,3,4,5,6\}$ & $\mathsf{4}$ & $\mathtt{true}$
& $\mathsf{35_{3}6_{5}}$ & $\{1,3,4,5,6\}$ & $\mathsf{5}$ & $\mathtt{true}$ \\

$\mathsf{356_{5}}$ & $\{2,3,4,5,6\}$ & $\mathsf{46}$ & $\mathtt{false}$
& $\mathsf{4_{2}5_{3}6_{4}}$ & $\{1,2,4,5,6\}$ & $\mathsf{e}$ & $\mathtt{true}$ \\

$\mathsf{4_{3}5_{3}6_{4}}$ & $\{1,4,5,6\}$ & $\mathsf{e}$ & $\mathtt{true}$
& $\mathsf{4_{3}56_{4}}$ & $\{2,4,5,6\}$ & $\mathsf{e}$ & $\mathtt{true}$ \\

$\mathsf{45_{4}6_{4}}$ & $\{4,5,6\}$ & $\mathsf{e}$ & $\mathtt{true}$
& $\mathsf{456_{4}}$ & $\{2,3,4,5,6\}$ & $\mathsf{5}$ & $\mathtt{true}$ \\

$\mathsf{5_{4}6_{5}}$ & $\{1,3,4,5,6\}$ & $\mathsf{e}$ & $\mathtt{true}$
& $\mathsf{56_{5}}$ & $\{3,4,5,6\}$ & $\mathsf{e}$ & $\mathtt{true}$ \\

$\mathsf{6}$ & $\{2,3,4,5,6\}$ & $\mathsf{e}$ & $\mathtt{true}$
& & & & \\
\hline
\end{longtblr}
}
\vspace{-4mm}
\Cref{Table:1} accounts for $161$ of the $|\mathfrak{I}_{7}|=232$ involutions. Hence we have $71$ cases remaining.

\subsection{Fully Commutative Cases} 

We now recall one of the main results from \cite{MMM24} and apply it to our special case of $\mathfrak{sl}_{7}$. Firstly, recall that a permutation $w\in\mathfrak{S}_{n}$ is called \emph{fully commutative} if and only if the Young diagram $\mathsf{sh}(w)$ has at most two rows. 

For $i\in[n-1]$ and $j\in\{1,2,\dots,\mathsf{min}(i-1,n-i-1)\}$, we let $\Sigma_{n}$ denote the set of elements
\[ \sigma_{i,0}:=s_{i}, \hspace{2mm} \text{ and } \hspace{2mm} \sigma_{i,j}:=\prod_{k=0}^{j}(i-k,i-k+j+1), \]
where we use cycle notation. The elements of $\Sigma_{n}$ are called \emph{special involutions}, and they are fully commutative. Two such elements $\sigma_{i,j}$ and $\sigma_{i',j'}$ are said to be \emph{distinct} provided that 
\[ (i+j+2)\leq (i'-j'-1) \hspace{2mm} \text{ or } \hspace{2mm} (i'+j'+2)\leq (i-j-1). \]
By \cite[Theorem~5]{MMM24}, for a fully commutative involution $d\in\mathfrak{I}_{n}$, then $\mathsf{K}(d)=\mathtt{true}$ if and only if it can be expressed as a product of pairwise distinct special involutions. In $\mathfrak{S}_{7}$, there are $35$ fully commutative involutions, $29$ of which do not belong to \Cref{Table:1}. We record below in \Cref{Table:2} these $29$ fully commutative involutions $d\in\mathfrak{I}_{7}$, their expressions as products of special involutions, and the values $\mathsf{K}(d)$ which depend on whether the special involutions in the products are pairwise distinct.   

{\footnotesize
\begin{longtblr}[
  caption = {The $29$ fully commutative involutions $d\in\mathfrak{I}_{7}$ expressed as products in $\Sigma_{7}$},
  label = {Table:2},
]{
  colspec = {|ccc|ccc|},
  rowhead = 1,
  row{even} = {gray9},
  row{1} = {teal9},
}
\hline
$d$ & As a product in $\Sigma_{7}$ & $\mathsf{K}(d)$ & $d$ & As a product in $\Sigma_{7}$ & $\mathsf{K}(d)$ \\ \hline

$\mathsf{e}$ & $\mathsf{e}$ & $\mathtt{true}$ 
& $\mathsf{13}$ & $\sigma_{1,0}\sigma_{3,0}$ & $\mathtt{false}$  \\ 

$\mathsf{135}$ & $\sigma_{1,0}\sigma_{3,0}\sigma_{5,0}$ & $\mathtt{false}$ 
& $\mathsf{136}$ & $\sigma_{1,0}\sigma_{3,0}\sigma_{6,0}$ & $\mathtt{false}$  \\ 

$\mathsf{14}$ & $\sigma_{1,0}\sigma_{4,0}$ & $\mathtt{true}$ 
& $\mathsf{14_{3}5_{4}}$ & $\sigma_{1,0}\sigma_{4,1}$ & $\mathtt{false}$  \\ 

$\mathsf{146}$ & $\sigma_{1,0}\sigma_{4,0}\sigma_{6,0}$ & $\mathtt{false}$ 
& $\mathsf{15}$ & $\sigma_{1,0}\sigma_{5,0}$ & $\mathtt{true}$  \\ 

$\mathsf{15_{4}6_{5}}$ & $\sigma_{1,0}\sigma_{5,1}$ & $\mathtt{true}$ 
& $\mathsf{16}$ & $\sigma_{1,0}\sigma_{6,0}$ & $\mathtt{true}$  \\ 

$\mathsf{2}$ & $\sigma_{2,0}$ & $\mathtt{true}$ 
& $\mathsf{2_{1}3_{2}5}$ & $\sigma_{2,1}\sigma_{5,0}$ & $\mathtt{false}$  \\ 

$\mathsf{2_{1}3_{2}6}$ & $\sigma_{2,1}\sigma_{6,0}$ & $\mathtt{true}$ 
& $\mathsf{2_{1}4_{2}5_{4}}$ & $\sigma_{3,0}\sigma_{3,2}\sigma_{3,0}$ & $\mathtt{false}$  \\ 

$\mathsf{24}$ & $\sigma_{2,0}\sigma_{4,0}$ & $\mathtt{false}$ 
& $\mathsf{246}$ & $\sigma_{2,0}\sigma_{4,0}\sigma_{6,0}$ & $\mathtt{false}$  \\ 

$\mathsf{25}$ & $\sigma_{2,0}\sigma_{5,0}$ & $\mathtt{true}$ 
& $\mathsf{25_{4}6_{5}}$ & $\sigma_{2,0}\sigma_{5,1}$ & $\mathtt{false}$  \\ 

$\mathsf{26}$ & $\sigma_{2,0}\sigma_{6,0}$ & $\mathtt{true}$ 
& $\mathsf{3}$ & $\sigma_{3,0}$ & $\mathtt{true}$  \\ 

$\mathsf{3_{2}4_{3}}$ & $\sigma_{3,1}$ & $\mathtt{true}$ 
& $\mathsf{3_{2}4_{3}6}$ & $\sigma_{3,1}\sigma_{6,0}$ & $\mathtt{false}$  \\ 

$\mathsf{3_{2}5_{3}6_{5}}$ & $\sigma_{4,0}\sigma_{4,2}\sigma_{4,0}$ & $\mathtt{false}$ 
& $\mathsf{35}$ & $\sigma_{3,0}\sigma_{5,0}$ & $\mathtt{false}$  \\ 

$\mathsf{36}$ & $\sigma_{3,0}\sigma_{6,0}$ & $\mathtt{true}$ 
& $\mathsf{4}$ & $\sigma_{4,0}$ & $\mathtt{true}$  \\ 

$\mathsf{4_{3}5_{4}}$ & $\sigma_{4,1}$ & $\mathtt{true}$ 
& $\mathsf{46}$ & $\sigma_{4,0}\sigma_{6,0}$ & $\mathtt{false}$  \\ 

$\mathsf{5}$ & $\sigma_{5,0}$ & $\mathtt{true}$ 
& & & \\ 
\hline
\end{longtblr}
}

\Cref{Table:1,Table:2} account for $190$ of the $|\mathfrak{I}_{7}|=232$ involutions. We have $42$ remaining cases.

\subsection{Consecutive Containment Cases}

We employ \Cref{Thm:PatternsKostantNegative} to get $25$ Kostant negative cases out of the remaining $42$ cases. These are displayed in \Cref{Table:3} below. Explicitly, we list any $d\in\mathfrak{I}_{7}$ (and their one-line description) out of the remaining $42$ cases which consecutively contain a pattern $p$ from \Cref{Thm:PatternsKostantNegative} (where we have underlined the location in which $p$ appears).

{\footnotesize
\begin{longtblr}[
  caption = {Remaining involutions $d$ which consecutively contain a pattern $p$ from \Cref{Thm:PatternsKostantNegative}},
  label = {Table:3},
]{
  colspec = {|ccc|ccc|},
  rowhead = 1,
  row{even} = {gray9},
  row{1} = {teal9},
}
\hline
$d$ & One-line description of $d$ & Pattern $p$ & $d$ & One-line description of $d$ & Pattern $p$ \\ \hline

$\mathsf{12_{1}45_{4}}$ & $\mathtt{32\underline{16547}}$ & $\mathtt{14325}$ 
& $\mathsf{12_{1}456_{4}}$ & $\mathtt{3\underline{2175}64}$ & $\mathtt{2143}$   \\ 

$\mathsf{123_{1}56_{5}}$ & $\mathtt{42\underline{3176}5}$ & $\mathtt{2143}$ 
& $\mathsf{13_{2}4_{1}56_{3}}$ & $\mathtt{\underline{5274}163}$ & $\mathtt{3142}$   \\ 

$\mathsf{134_{3}}$ & $\mathtt{\underline{2154}367}$ & $\mathtt{2143}$ 
& $\mathsf{134_{3}5_{3}}$ & $\mathtt{\underline{2165}437}$ & $\mathtt{2143}$   \\ 

$\mathsf{134_{3}6}$ & $\mathtt{\underline{2154}376}$ & $\mathtt{2143}$ 
& $\mathsf{1345_{3}}$ & $\mathtt{\underline{2164}537}$ & $\mathtt{2143}$   \\ 

$\mathsf{135_{3}6_{5}}$ & $\mathtt{\underline{2164}735}$ & $\mathtt{2143}$ 
& $\mathsf{145_{4}}$ & $\mathtt{21\underline{36547}}$ & $\mathtt{14325}$   \\ 

$\mathsf{2_{1}3_{1}5_{2}6_{5}}$ & $\mathtt{46\underline{3172}5}$ & $\mathtt{3142}$ 
& $\mathsf{2_{1}34_{2}6}$ & $\mathtt{351\underline{4276}}$ & $\mathtt{2143}$   \\ 

$\mathsf{2_{1}4_{3}5_{2}6_{4}}$ & $\mathtt{3\underline{6175}24}$ & $\mathtt{3142}$ 
& $\mathsf{23_{2}}$ & $\mathtt{\underline{14325}67}$ & $\mathtt{14325}$   \\ 

$\mathsf{23_{2}4_{2}6}$ & $\mathtt{154\underline{3276}}$ & $\mathtt{2143}$ 
& $\mathsf{23_{2}45_{2}}$ & $\mathtt{\underline{16435}27}$ & $\mathtt{15324}$   \\ 

$\mathsf{23_{2}5}$ & $\mathtt{\underline{14326}57}$ & $\mathtt{14325}$ 
& $\mathsf{23_{2}56_{5}}$ & $\mathtt{\underline{14327}65}$ & $\mathtt{14325}$   \\ 

$\mathsf{23_{2}6}$ & $\mathtt{\underline{14325}76}$ & $\mathtt{14325}$ 
& $\mathsf{234_{2}6}$ & $\mathtt{153\underline{4276}}$ & $\mathtt{2143}$   \\ 

$\mathsf{234_{3}5_{2}}$ & $\mathtt{16\underline{35427}}$ & $\mathtt{24315}$ 
& $\mathsf{245_{4}}$ & $\mathtt{1\underline{3265}47}$ & $\mathtt{2143}$   \\ 

$\mathsf{34_{3}}$ & $\mathtt{1\underline{25436}7}$ & $\mathtt{14325}$ 
& $\mathsf{34_{3}6}$ & $\mathtt{125\underline{4376}}$ & $\mathtt{2143}$   \\ 

$\mathsf{45_{4}}$ & $\mathtt{12\underline{36547}}$ & $\mathtt{14325}$ 
& & &  \\
\hline
\end{longtblr}
}

\Cref{Table:1,Table:2,Table:3} account for $215$ of the $|\mathfrak{I}_{7}|=232$ involutions. We have $17$ remaining cases.

\begin{rmk}
By \Cref{Thm:PatternsKostantNegative}, we know that $\mathsf{K}(d)=\mathtt{false}$ if a permutation in the same left cell as $d$ consecutively contains one of six patterns listed therein. However, within \Cref{Table:3} above, it was the involutions themselves which consecutively contained such a pattern. As such, each of the involutions appearing in \Cref{Table:3} satisfy K{\aa}hrstr\"om's Conjecture by \Cref{Cor:PatternsKahrstromsConjecture}.
\end{rmk}

\subsection{Remaining Cases} 
\label{SubSec:RemainingCases}

We have the following $17$ remaining involutions:
\begin{multicols}{3}
\begin{itemize}
\item[(1)]
\begin{itemize}
\item[(a)] $\mathsf{124_{1}56_{4}}$
\item[(b)] $\mathsf{13_{1}456_{3}}$
\end{itemize}
\item[(2)]
\begin{itemize}
\item[(a)] $\mathsf{13_{1}45_{3}}$
\item[(b)] $\mathsf{24_{2}56_{4}}$
\end{itemize}
\item[(3)]
\begin{itemize}
\item[(a)] $\mathsf{14_{3}5_{3}6_{4}}$
\item[(b)] $\mathsf{2_{1}3_{1}4_{2}6}$
\end{itemize}
\end{itemize}
\end{multicols}

\begin{multicols}{3}
\begin{itemize}
\item[(4)]
\begin{itemize}
\item[(a)] $\mathsf{24_{2}5_{4}}$
\item[(b)] $\mathsf{3_{2}45_{3}}$
\end{itemize}
\item[(5)]
\begin{itemize}
\item[(a)] $\mathsf{23_{2}4_{2}}$
\item[(b)] $\mathsf{34_{3}5_{3}}$
\end{itemize}
\item[(6)]
\begin{itemize}
\item[(a)] $\mathsf{234_{2}}$
\item[(b)] $\mathsf{345_{3}}$
\end{itemize}
\end{itemize}
\end{multicols}

\begin{multicols}{5}
\begin{itemize}
\item[(7)] $\mathsf{12_{1}56_{5}}$
\item[(8)] $\mathsf{2_{1}35_{2}6_{5}}$
\item[(9)] $\mathsf{23_{2}4_{2}5_{2}}$
\item[(10)] $\mathsf{2345_{2}}$
\item[(11)] $\mathsf{3_{2}4_{2}5_{3}}$
\end{itemize}
\end{multicols}
These cases have been paired up according to the natural symmetry of the root system, which pairs $w$ with $w_{0}ww_{0}$. Hence we only need to consider one involution for each of the above pairs. We go through each of these case-by-case (which are aided by GAP3 computations):

\textbf{Case} (1)(a): Let $d:=124_{1}56_{4}$. Then given $x:=12456_{2}$ and $y:=124_{2}56$, we seek to show that $\theta_{x}L(d)\cong\theta_{y}L(d)\neq0$. Note that $x=65y$. By computations, we have
\[ \theta_{y}\theta_{65}\cong\theta_{123_{2}6}\oplus\theta_{123_{2}56_{5}}\oplus\theta_{x}. \]
Since $D_{L}(\mathsf{123_{2}6})=\{1,3,6\}$ and $D_{L}(\mathsf{123_{2}56_{5}})=\{1,3,5,6\}$ are not subsets of $D_{L}(d)=\{1,4,6\}$, then by \Cref{Eq:ROrderDomDes} and \Cref{Eq:ThetaLNonZero}, $\theta_{123_{2}6}L(d)=0$ and $\theta_{123_{2}56_{5}}L(d)=0$. Therefore, $\theta_{x}L(d)\cong\theta_{y}\theta_{65}L(d)$. By computations, one can confirm that both
\[ [\theta_{65}L(d)]=[L(d)]+(v+v^{-1})[L(d5)] \]
and $[\theta_{y}L(d5)]=0$. This implies $\theta_{x}L(d)\cong\theta_{y}\theta_{65}L(d)\cong\theta_{y}L(d)$. Lastly, a computation can confirm $[\theta_{y}L(d)]\neq0$ and so $\theta_{y}L(d)\neq0$. Thus we have shown $\mathsf{Kh}(d)=\mathtt{false}$ and so $\mathsf{K}(d)=\mathtt{false}$.

\textbf{Case} (2)(a): Let $d:=13_{1}45_{3}$. Given both $x:=123_{2}45_{3}$ and $y:=3245_{3}$, we seek to show that $\theta_{x}L(d)\cong\theta_{y}L(d)\neq0$. Note that $x=12y$. By computations, we have
\[ \theta_{y}\theta_{12}\cong\theta_{1245_{3}}\oplus\theta_{x}. \]
Note that $D_{L}(\mathsf{1245_{3}})=\{1,4,5\}$ is not a subset of $D_{L}(d)=\{1,3,5\}$, hence by \Cref{Eq:ROrderDomDes} and \Cref{Eq:ThetaLNonZero}, $\theta_{\mathsf{1245_{3}}}L(d)=0$. Therefore $\theta_{x}L(d)\cong\theta_{y}\theta_{12}L(d)$. By computations, we have
\[ [\theta_{12}L(d)]=[L(d)]+(v+v^{-1})[L(d3)] \]
and  $[\theta_{y}L(d3)]=0$. This implies $\theta_{x}L(d)\cong\theta_{y}\theta_{12}L(d)\cong\theta_{y}L(d)$. Lastly, a computation can confirm $[\theta_{y}L(d)]\neq0$ and so $\theta_{y}L(d)\neq0$. Thus we have shown $\mathsf{Kh}(d)=\mathtt{false}$ and so $\mathsf{K}(d)=\mathtt{false}$.

\textbf{Case} (3)(a): Let $d:=14_{3}5_{3}6_{4}$. Then given $x:=124_{3}5_{3}6$ and $y:=4_{3}5_{3}6$, we seek to show that $\theta_{x}L(d)\cong\theta_{y}L(d)\neq0$. Note that $x=12y$. By computations, we have $\theta_{x}\cong\theta_{y}\theta_{12}$ and 
\[ [\theta_{12}L(d)]=[L(d)]+[L(d23)]+(v+v^{-1})[L(d2)]. \]
Note, $D_{L}(y)=\{4,5\}$ is not a subset of $D_{L}((d23)^{-1})=D_{R}(d23)=\{3,4\}$, and so $[\theta_{y}L(d23)]=0$. Moreover, by computations we have $[\theta_{y}L(d2)]=0$. Hence we have
\[ \theta_{x}L(d)\cong\theta_{y}\theta_{12}L(d)\cong\theta_{y}L(d). \]
Lastly, a computation confirms that $[\theta_{y}L(d)]\neq 0$. Thus $\mathsf{Kh}(d)=\mathtt{false}$ and so $\mathsf{K}(d)=\mathtt{false}$.

\textbf{Case} (4)(a): Let $d:=24_{2}5_{4}$. Then given $x:=2_{1}4_{2}5_{4}6$ and $y:=4_{2}5_{4}6$, we seek to show that $\theta_{x}L(d)\cong\theta_{y}L(d)\neq0$. Note that $x=21y$. By computations, we have $\theta_{x}\cong\theta_{y}\theta_{21}$ and 
\[ [\theta_{21}L(d)]=[L(d)]+(v+v^{-1})[L(d1)]. \]
By computations, we have $[\theta_{y}L(d1)]=0$ and $[\theta_{y}L(d)]\neq0$. So $\theta_{x}L(d)\cong\theta_{y}\theta_{21}L(d)\cong\theta_{y}L(d)\neq0$, and thus $\mathsf{Kh}(d)=\mathtt{false}$ and $\mathsf{K}(d)=\mathtt{false}$.

\textbf{Case} (5)(a): Let $d:=23_{2}4_{2}$. Then given $x:=2_{1}4_{1}5_{2}$ and $y:=2_{1}3_{1}4_{2}$, we seek to show that $\theta_{x}L(d)\cong\theta_{y}L(d)\neq0$. Note that $x=45y$. By computations we have $\theta_{x}\cong\theta_{y}\theta_{45}$ and
\[ [\theta_{45}L(d)]=[L(d)]+[L(d56)]+(v+v^{-1})[L(d5)]. \]
By computations, $[\theta_{y}L(d5)],[\theta_{y}L(d56)]=0$ and $[\theta_{y}L(d)]\neq0$. So $\theta_{x}L(d)\cong\theta_{y}\theta_{45}L(d)\cong\theta_{y}L(d)$, which is non-zero, and hence $\mathsf{Kh}(d)=\mathtt{false}$ and $\mathsf{K}(d)=\mathtt{false}$.

\textbf{Case} (7): Let $d:=\mathsf{12_{1}56_{5}}$. For $x:=\mathsf{2_{1}346_{1}}$ and $y:=\mathsf{12_{1}5}$, we seek to show $\theta_{x}L(d)\cong\theta_{y}L(d)\neq0$. Note that $x=\mathsf{236_{3}}y$. By computations we have $\theta_{x}\cong\theta_{y}\theta_{236_{3}}$ and
\[ [\theta_{236_{3}}L(d)]=[L(d)]+\sum_{u\in U}p_{u}[L(u)], \]
for some $0\neq p_{u}\in\mathbb{Z}_{\geq 0}[v,v^{-1}]$ and $U$ the set consisting of the following permutations:
\begin{align*}
&d\mathsf{3}, \hspace{2mm} d\mathsf{4}, \hspace{2mm} \mathsf{12_{1}3456_{5}}, \hspace{2mm} \mathsf{12_{1}35_{4}6_{5}}, \hspace{2mm} d\mathsf{34}, \hspace{2mm} d\mathsf{43}, \hspace{2mm} \mathsf{156_{1}}, \hspace{2mm} \mathsf{2_{1}56_{2}}, \hspace{2mm} \mathsf{12_{1}3456_{4}}, \hspace{2mm} \mathsf{12_{1}35_{4}6_{4}}, \hspace{2mm} \mathsf{12_{1}356_{3}}, \\
&d\mathsf{432}, \hspace{2mm} \mathsf{1256_{1}}, \hspace{2mm} \mathsf{12_{1}3456_{3}}, \hspace{2mm} \mathsf{12_{1}35_{4}6_{3}}, \hspace{2mm} \mathsf{12_{1}356_{2}}, \hspace{2mm} \mathsf{12356_{1}}, \hspace{2mm} \mathsf{12_{1}3456_{2}}, \hspace{2mm} \mathsf{12_{1}35_{4}6_{2}}, \hspace{2mm} \mathsf{123456_{1}}, \\
&\hspace{45mm} \mathsf{1235_{4}6_{1}}, \hspace{2mm} \mathsf{12345_{1}6_{2}}, \hspace{2mm} \mathsf{2_{1}3_{2}4_{3}5_{4}6_{1}}.
\end{align*}
For each $u\in U$, one can check that $D_{L}(y)=\{1,2,5\}$ is not a subset of $D_{L}(u^{-1})=D_{R}(u)$. From \Cref{Eq:ROrderDomDes} and \Cref{Eq:ThetaLNonZero}, this implies $\theta_{y}L(u)=0$. Hence $\theta_{x}L(d)\cong\theta_{y}\theta_{\mathsf{236_{3}}}L(d)\cong\theta_{y}L(d)$, and this module is non-zero since $y\leq_{R}y\mathsf{6}\leq_{R}d$. Hence $\mathsf{Kh}(d)=\mathtt{false}$ and $\mathsf{K}(d)=\mathtt{false}$.

\textbf{Case} (8): Let $d:=\mathsf{2_{1}35_{2}6_{5}}$. Then given $x:=\mathsf{2_{1}3_{2}45_{2}6}$ and $y:=\mathsf{25_{2}6}$, we seek to show that $\theta_{x}L(d)\cong\theta_{y}L(d)\neq0$. Note that $x=\mathsf{2_{1}34}y$. By computations we have 
\[ \theta_{y}\theta_{2_{1}34}\cong\theta_{x}\oplus\theta_{2_{1}3_{2}4_{2}6}\oplus\theta_{12_{1}34}\oplus\theta_{23_{2}6}, \]
and $[\theta_{2_{1}3_{2}4_{2}6}L(d)]=[\theta_{12_{1}3_{2}6}L(d)]=[\theta_{23_{2}6}L(d)]=0$. Therefore we have that $\theta_{x}L(d)\cong\theta_{y}\theta_{2_{1}36_{3}}L(d)$. By computations, we also have that
\[ [\theta_{2_{1}34}L(d)]=[L(d)]+\sum_{u\in U}p_{u}[L(u)], \]
for some $0\neq p_{u}\in\mathbb{Z}_{\geq 0}[v,v^{-1}]$ and $U$ the set consisting of the following permutations:
\begin{align*}
&\mathsf{235_{3}6_{5}}, \hspace{2mm} \mathsf{2_{1}3_{2}5_{3}6_{5}}, \hspace{2mm} \mathsf{2_{1}3_{2}4_{3}5_{3}6_{5}}, \hspace{2mm} \mathsf{2_{1}35_{2}6_{3}}, \hspace{2mm} \mathsf{2_{1}35_{3}6_{5}}, \\
&\mathsf{235_{3}6_{4}}, \hspace{2mm} \mathsf{2_{1}35_{3}6_{4}}, \hspace{2mm} \mathsf{2_{1}3_{2}5_{3}6_{4}}, \hspace{2mm} \mathsf{2_{1}35_{2}6_{4}}, \hspace{2mm} \mathsf{2_{1}3_{2}4_{3}5_{3}6_{4}}.
\end{align*}
One can check that all the permutations $u\in U\backslash\{2_{1}35_{2}6_{4}\}$ are such that $D_{L}(y)=\{2,5\}$ is not a subset of $D_{R}(u)$, and thus by \Cref{Eq:ROrderDomDes} and \Cref{Eq:ThetaLNonZero} we have that $\theta_{y}L(u)=0$. Moreover, a computation confirms that $\theta_{y}L(2_{1}35_{2}6_{4})=0$. Therefore, 
\[ \theta_{x}L(d)\cong\theta_{y}\theta_{2_{1}34}L(d)\cong\theta_{y}L(d). \]
Lastly, via a computation we have $[\theta_{y}L(d)]\neq0$, and hence $\mathsf{Kh}(d)=\mathtt{false}$ and $\mathsf{K}(d)=\mathtt{false}$.

\textbf{Case} (9): Let $d:=\mathsf{23_{2}4_{2}5_{2}}$. Then given $x:=\mathsf{2_{1}3_{1}5_{1}6_{2}}$ and $y:=\mathsf{2_{1}3_{1}4_{1}5_{2}}$, we seek to show that $\theta_{x}L(d)\cong\theta_{y}L(d)\neq0$. Note that $x=56y$. By computations we have $\theta_{x}\cong\theta_{y}\theta_{56}$ and
\[ [\theta_{56}L(d)]=[L(d)]+(v+v^{-1})[L(d6)]. \]
By computations, $[\theta_{y}L(d6)]=0$ and $[\theta_{y}L(d)]\neq0$. Hence $\theta_{x}L(d)\cong\theta_{y}\theta_{56}L(d)\cong\theta_{y}L(d)\neq0$, and thus $\mathsf{Kh}(d)=\mathtt{false}$ and $\mathsf{K}(d)=\mathtt{false}$.

We now show that the remaining three cases are Kostant positive. In fact, we show they also satisfy K{\aa}hrstr\"om's Conjecture, i.e. that $\mathsf{K}(d)=\mathsf{Kh}(d)=[\mathsf{Kh}](d)=[\mathsf{Kh}^{\mathsf{ev}}]=\mathtt{true}$.

\textbf{Case} (6)(a): Let $d=\mathsf{234_{2}}$, we have $\mathsf{sh}(d)=(5,1,1)$ and $D(d)=\{2,4\}$. We first seek to show that $\mathsf{KM}(\star,d)=\mathtt{true}$, that is, to show $\theta_{x}L(d)$ is either zero or indecomposable for all $x\in\mathfrak{S}_{7}$. From \Cref{Eq:ThetaLNonZero}, we only need to consider when $x\leq_{R}d$, which implies $\mathsf{sh}(d)\preceq\mathsf{sh}(x)$ and $D_{L}(x)\subset D(d)$ by \Cref{Eq:ROrderDomDes}. Also, by (a) of \Cref{Prop:IndecompConjLRCellInv}, we only need to consider $x$ an involution. From this, it suffices to only consider $x$ from the following list:
\[ \mathsf{234_{2}}, \mathsf{2_{1}3_{2}}, \mathsf{4_{3}5_{4}}, \mathsf{24}, \mathsf{2}, \mathsf{4}, e. \]
Thus $\mathsf{KM}(\star,d)=\mathtt{true}$ by \Cref{Cor:KMxSmallSup}. By \Cref{Eq:KiffKhKM}, we must have that $\mathsf{K}(d)=\mathsf{Kh}(d)$. Note, the equality $[\mathsf{Kh}^{\mathsf{ev}}](d)=\mathtt{true}$ naturally implies both $[\mathsf{Kh}](d)=\mathtt{true}$ and $\mathsf{Kh}(d)=\mathtt{true}$, and thus also $\mathsf{K}(d)=\mathtt{true}$. Therefore, it suffices to show $[\mathsf{Kh}^{\mathsf{ev}}](d)=\mathtt{true}$. To this end, it will be helpful to know all $x\in\mathfrak{S}_{7}$ such that $x\leq_{R}d$. Such a set is a union of right cells which contain an involution from the above list. One can check which involutions above belong to such a set by checking if $[\theta_{x}L(d)]$ is non-zero or not (Equation \eqref{Eq:ThetaLNonZero}). By GAP computations we get
\[ \mathfrak{I}_{7}(\leq_{R}d):=\{x\in\mathfrak{I}_{7} \ | \ x\leq_{R}d\}=\{\mathsf{234_{2}}, \mathsf{24}, \mathsf{2}, \mathsf{4}, e\}. \]
Thus, if $x\leq_{R}d$, then $x$ belongs to the right cell of some involution in $\mathfrak{I}_{7}(\leq_{R}d)$. We want to show $[\mathsf{Kh}^{\mathsf{ev}}](d)=\mathtt{true}$, i.e. show that for any distinct pair $x,y\leq_{R}d$ we have the inequality
\begin{equation}\label{Eq:Case(6)(a)[Kh]Condition}
\mathsf{ev}(\underline{\hat{H}}_{d}\underline{H}_{x})\neq\mathsf{ev}(\underline{\hat{H}}_{d}\underline{H}_{y}).
\end{equation}
By \Cref{Eq:ThetaxL=ThetayLImpliesxly}, if we instead had an equality, then $x\sim_{L}y$, in particular, $\mathsf{sh}(x)=\mathsf{sh}(y)$. Therefore, without loss of generality, we need to confirm that Inequality~(\ref{Eq:Case(6)(a)[Kh]Condition}) holds for all pairs $(x,y)$ such that  $x\sim_{R}\mathsf{2}$, $y\sim_{R}\mathsf{4}$, and $x\sim_{L}y$, i.e. all pairs from the following list: $(\mathsf{23456},\mathsf{456})$, $(\mathsf{2345},\mathsf{45})$, $(\mathsf{234},\mathsf{4})$, $(\mathsf{23},\mathsf{43})$, $(\mathsf{2},\mathsf{432})$, and $(\mathsf{21},\mathsf{4321})$. One can confirm such via GAP computations.

\textbf{Case} (10): Let $d=\mathsf{2345_{2}}$. By similar considerations to the previous case, one can deduce that
\[ \mathfrak{I}_{7}(\leq_{R}d):=\{x\in\mathfrak{I}_{7} \ | \ x\leq_{R}d\}=\{\mathsf{2345_{2}}, \mathsf{25}, \mathsf{2}, \mathsf{5}, e\}. \]
Thus $\mathsf{KM}(\star,d)=\mathtt{true}$ by \Cref{Cor:KMxSmallSup}. As previously, it suffices to show that $[\mathsf{Kh}^{\mathsf{ev}}](d)=\mathtt{true}$, which in turn comes down to checking that \eqref{Eq:Case(6)(a)[Kh]Condition} holds for all pairs $(x,y)$ such that $x\sim_{R}\mathsf{2}$, $y\sim_{R}\mathsf{5}$, and $x\sim_{L}y$. In particular, for all pairs in the following list: $(\mathsf{23456},\mathsf{56})$, $(\mathsf{2345},\mathsf{5})$, $(\mathsf{234},\mathsf{54})$, $(\mathsf{23},\mathsf{543})$, $(\mathsf{2},\mathsf{5432})$, and $(\mathsf{21},\mathsf{54321})$. Again, GAP computations confirm such inequalities.

\textbf{Case} (11): Let $d=\mathsf{3_{2}4_{2}5_{3}}$. By similar considerations to the previous two cases, we have
\[ \mathfrak{I}_{7}(\leq_{R}d):=\{x\in\mathfrak{I}_{7} \ | \ x\leq_{R}d\}=\{\mathsf{3_{2}4_{2}5_{3}}, \mathsf{3_{1}4_{2}5_{3}}, \mathsf{4_{2}5_{3}6_{4}}, \mathsf{34_{3}}, \mathsf{3_{2}4_{3}}, \mathsf{4_{3}5_{4}}, \mathsf{3}, \mathsf{4}, e\}. \]
Thus $\mathsf{KM}(\star,d)=\mathtt{true}$ by \Cref{Cor:KMxSmallSup}. Again, it suffices to show that $[\mathsf{Kh}^{\mathsf{ev}}](d)=\mathtt{true}$, which in turn comes down to checking that \eqref{Eq:Case(6)(a)[Kh]Condition} holds for all pairs $(x,y)$ such that $x,y\leq_{R}d$ and $x\sim_{L}y$. There are three different shapes for which such pairs can take, which give the three cases: (i) $x\sim_{R}\mathsf{3_{1}4_{2}5_{3}}$, $y\sim_{R}\mathsf{4_{2}5_{3}6_{4}}$, and $x\sim_{L}y$; (ii) $x\sim_{R}\mathsf{3_{2}4_{3}}$, $y\sim_{R}\mathsf{4_{3}5_{4}}$, and $x\sim_{L}y$; and (iii) $x\sim_{R}3$, $y\sim_{R}4$, and $x\sim_{L}y$. For case (i), we are dealing with the following list of pairs:
\begin{align*}
&(\mathsf{3_{1}4_{2}5_{3}6_{4}},\mathsf{4_{2}5_{3}6_{4}}), (\mathsf{3_{1}4_{2}5_{3}},\mathsf{4_{1}5_{2}6_{3}}), (\mathsf{3_{1}4_{2}5_{3}6},\mathsf{4_{2}5_{3}6}), (\mathsf{3_{1}4_{2}5_{3}6_{5}},\mathsf{4_{2}5_{3}6_{5}}), (\mathsf{3_{1}4_{2}56},\mathsf{4_{1}5_{2}6}), \\
&(\mathsf{3_{1}4_{2}5},\mathsf{4_{1}5_{2}6_{5}}), (\mathsf{3_{1}4_{2}5_{4}6_{5}},\mathsf{4_{2}5_{4}6_{5}}), (\mathsf{3_{1}4_{2}5_{4}},\mathsf{4_{1}5_{2}6_{4}}), (\mathsf{3_{1}4_{2}5_{4}6},\mathsf{4_{2}5_{4}6}), (\mathsf{3_{1}4_{3}5_{4}6_{5}},\mathsf{4_{1}5_{4}6_{5}}), \\
& \hspace{18mm} (\mathsf{3_{1}4_{3}5_{4}},\mathsf{4_{1}5_{3}6_{4}}), (\mathsf{3_{1}4_{3}5_{4}6},\mathsf{4_{1}5_{4}6}), (\mathsf{3_{1}4_{3}56},\mathsf{4_{1}5_{3}6}), (\mathsf{3_{1}4_{3}5},\mathsf{4_{1}5_{3}6_{5}}).
\end{align*}
For case (ii) we are dealing with the following list if pairs:
\begin{align*}
& \hspace{2mm} (\mathsf{3_{1}4_{2}},\mathsf{4_{1}5_{2}}), (\mathsf{3_{2}4_{3}5_{4}6_{5}},\mathsf{4_{3}5_{4}6_{5}}), (\mathsf{3_{2}4_{3}5_{4}},\mathsf{4_{3}5_{4}}), (\mathsf{3_{2}4_{3}5_{4}6},\mathsf{4_{3}5_{4}6}), (\mathsf{3_{2}4_{3}},\mathsf{4_{2}5_{3}}), \\
&(\mathsf{3_{2}4_{3}56},\mathsf{4_{3}56}), (\mathsf{3_{2}4_{3}5},\mathsf{4_{3}5}), (\mathsf{3_{2}456},\mathsf{4_{2}56}), (\mathsf{3_{2}45},\mathsf{4_{2}5}), (\mathsf{3_{2}4},\mathsf{4_{2}5_{4}}), (\mathsf{3_{1}456},\mathsf{4_{1}56}), \\ 
& \hspace{35mm} (\mathsf{3_{1}45},\mathsf{4_{1}5}), (\mathsf{3_{1}4},\mathsf{4_{1}5_{4}}), (\mathsf{3_{1}4_{3}},\mathsf{4_{1}5_{3}}).
\end{align*}
Lastly, for case (iii), we are dealing with the following list of pairs:
\[ (\mathsf{3456},\mathsf{456}), (\mathsf{345},\mathsf{45}), (\mathsf{34},\mathsf{4}), (\mathsf{3},\mathsf{43}), (\mathsf{32},\mathsf{432}), (\mathsf{321},\mathsf{4321}). \]
Once again, GAP computations confirm all $34$ of these inequalities.

We are now able to provide an answer to Kostant's problem for $\mathfrak{sl}_{7}$:

\begin{thm} \label{Thm:KostantsProblemA6}
A given $w\in\mathfrak{S}_{7}$ is Kostant negative if and only if it belongs to the same left cell of any of the involutions in the following table:
{\footnotesize
\begin{longtblr}[
  caption = {Kostant negative involutions $d\in\mathfrak{I}_{7}$},
  label = {Table:4},
]{colspec = {cccccccc}}
 $\mathsf{12_{1}3_{1}4_{1}6}$ & $\mathsf{12_{1}3_{1}45_{4}6_{1}}$ & $\mathsf{12_{1}3_{1}5}$ & $\mathsf{12_{1}3_{1}56_{5}}$ & $\mathsf{12_{1}3_{2}4_{1}5_{1}}$ & $\mathsf{12_{1}34_{1}6}$ & $\mathsf{12_{1}34_{3}5_{1}}$ & $\mathsf{12_{1}34_{3}5_{3}6_{1}}$ \\
 $\mathsf{12_{1}34_{3}56_{1}}$ & $\mathsf{12_{1}4}$ & $\mathsf{12_{1}4_{1}56_{4}}$ & $\mathsf{12_{1}4_{3}5_{1}6_{4}}$ & $\mathsf{12_{1}45_{4}}$ & $\mathsf{12_{1}45_{4}6_{4}}$ & $\mathsf{12_{1}456_{4}}$ & $\mathsf{12_{1}46}$ \\
 $\mathsf{12_{1}5_{4}6_{5}}$ & $\mathsf{12_{1}56_{5}}$ & $\mathsf{123_{1}5}$ & $\mathsf{123_{1}56_{5}}$ & $\mathsf{123_{2}4_{1}}$ & $\mathsf{123_{2}4_{1}6}$ & $\mathsf{123_{2}4_{2}5_{1}}$ & $\mathsf{123_{2}4_{2}56_{1}}$ \\
 $\mathsf{123_{2}45_{4}6_{1}}$ & $\mathsf{1234_{1}6}$ & $\mathsf{1234_{3}5_{1}}$ & $\mathsf{124_{1}56_{4}}$ & $\mathsf{13}$ & $\mathsf{13_{1}4_{1}56_{3}}$ & $\mathsf{13_{1}4_{3}6}$ & $\mathsf{13_{1}45_{3}}$ \\
 $\mathsf{13_{1}45_{4}6_{3}}$ & $\mathsf{13_{1}456_{3}}$ & $\mathsf{13_{1}5_{3}6_{5}}$ & $\mathsf{13_{2}4_{1}5_{3}}$ & $\mathsf{13_{2}4_{1}56_{3}}$ & $\mathsf{13_{2}4_{2}5_{1}6_{3}}$ & $\mathsf{13_{2}4_{3}5_{1}6_{3}}$ & $\mathsf{134_{3}}$ \\
 $\mathsf{134_{1}5_{3}6_{3}}$ & $\mathsf{134_{3}5_{3}}$ & $\mathsf{134_{3}5_{3}6_{3}}$ & $\mathsf{134_{3}56_{3}}$ & $\mathsf{134_{3}6}$ & $\mathsf{1345_{3}}$ & $\mathsf{1345_{4}6_{3}}$ & $\mathsf{13456_{3}}$ \\
 $\mathsf{135}$ & $\mathsf{135_{3}6_{5}}$ & $\mathsf{1356_{5}}$ & $\mathsf{136}$ & $\mathsf{14_{3}5_{4}}$ & $\mathsf{14_{3}5_{3}6_{4}}$ & $\mathsf{14_{3}56_{4}}$ & $\mathsf{145_{4}}$ \\
 $\mathsf{146}$ & $\mathsf{2_{1}3_{1}4_{1}56_{2}}$ & $\mathsf{2_{1}3_{1}4_{2}6}$ & $\mathsf{2_{1}3_{1}45_{2}}$ & $\mathsf{2_{1}3_{1}45_{4}6_{2}}$ & $\mathsf{2_{1}3_{1}5_{2}6_{5}}$ & $\mathsf{2_{1}3_{2}4_{1}56_{2}}$ & $\mathsf{2_{1}3_{2}5}$ \\
 $\mathsf{2_{1}3_{2}56_{5}}$ & $\mathsf{2_{1}34_{2}6}$ & $\mathsf{2_{1}34_{3}5_{2}}$ & $\mathsf{2_{1}35_{2}6_{5}}$ & $\mathsf{2_{1}4_{2}5_{4}}$ & $\mathsf{2_{1}4_{2}56_{4}}$ & $\mathsf{2_{1}4_{3}5_{2}6_{4}}$ & $\mathsf{23_{2}}$ \\
 $\mathsf{23_{2}4_{2}}$ & $\mathsf{23_{2}4_{2}5_{2}}$ & $\mathsf{23_{2}4_{2}56_{2}}$ & $\mathsf{23_{2}4_{2}6}$ & $\mathsf{23_{2}4_{3}5_{2}6_{2}}$ & $\mathsf{23_{2}45_{2}}$ & $\mathsf{23_{2}45_{4}6_{2}}$ & $\mathsf{23_{2}456_{2}}$ \\
 $\mathsf{23_{2}5}$ & $\mathsf{23_{2}5_{2}6_{5}}$ & $\mathsf{23_{2}56_{5}}$ & $\mathsf{23_{2}6}$ & $\mathsf{234_{2}6}$ & $\mathsf{234_{3}5_{2}}$ & $\mathsf{24}$ & $\mathsf{24_{2}5_{4}}$ \\
 $\mathsf{24_{2}56_{4}}$ & $\mathsf{2435_{2}6_{4}}$ & $\mathsf{245_{4}}$ & $\mathsf{245_{4}6_{4}}$ & $\mathsf{2456_{4}}$ & $\mathsf{246}$ & $\mathsf{25_{4}6_{5}}$ & $\mathsf{3_{2}4_{2}56_{3}}$ \\
 $\mathsf{3_{2}4_{3}6}$ & $\mathsf{3_{2}45_{3}}$ & $\mathsf{3_{2}5_{3}6_{5}}$ & $\mathsf{34_{3}}$ & $\mathsf{34_{3}5_{3}}$ & $\mathsf{34_{3}56_{3}}$ & $\mathsf{34_{3}6}$ & $\mathsf{35}$ \\
 $\mathsf{356_{5}}$ & $\mathsf{45_{4}}$ & $\mathsf{46}$ & & & & &
\end{longtblr}}
\end{thm}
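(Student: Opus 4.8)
The plan is to assemble the case analysis carried out in \Cref{Sec:KostantsProblemA6}. By \Cref{Thm:KLeftCellInv} the truth value $\mathsf{K}(w)$ depends only on the left cell of $w$, and since every left cell of $\mathfrak{S}_{7}$ contains exactly one involution, it suffices to determine $\mathsf{K}(d)$ for each of the $|\mathfrak{I}_{7}|=232$ involutions $d$. I would partition these $232$ involutions into four disjoint groups, exactly as in the preceding subsections, and record $\mathsf{K}(d)$ in each case.

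First, the $161$ involutions of \Cref{Table:1}: these are the $d$ for which there is a \emph{proper} parabolic $\mathfrak{S}_{7}(I)\subsetneq\mathfrak{S}_{7}$ and $w\in\mathfrak{S}_{7}(I)$ with $d\sim_{L}ww_{0}^{I}w_{0}$. For such $d$, \Cref{Thm:ParaLiftK} gives $\mathsf{K}(d)=\mathsf{K}_{\mathfrak{l}}(w)$, and \Cref{Lem:KForAProducts} reduces $\mathsf{K}_{\mathfrak{l}}(w)$ to the conjunction of the $\mathsf{K}(w_{i})$ over the $\mathfrak{sl}_{n_{i}}$-factors with $n_{i}\leq 6$, all known from \cite{KM10,KMM23} and recalled in \Cref{SubSec:Kostant'sProblemforSmallerCases}. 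Second, the $29$ fully commutative involutions of \Cref{Table:2} not already in \Cref{Table:1}: here \cite[Theorem~5]{MMM24} says $\mathsf{K}(d)=\mathtt{true}$ iff $d$ is a product of pairwise distinct special involutions, which is read off directly. Third, the $25$ involutions of \Cref{Table:3}: each consecutively contains one of $\mathtt{2143},\mathtt{3142},\mathtt{14325},\mathtt{15324},\mathtt{25314},\mathtt{24315}$, so \Cref{Thm:PatternsKostantNegative} forces $\mathsf{K}(d)=\mathtt{false}$.

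Fourth, the $17$ remaining involutions of \Cref{SubSec:RemainingCases}: these come in six pairs $\{d,w_{0}dw_{0}\}$ together with five self-paired elements, so it suffices to treat $11$ of them. For the eight that turn out to be Kostant negative I would exhibit distinct $x,y\in\mathfrak{S}_{7}$ with $\theta_{x}L(d)\cong\theta_{y}L(d)\neq0$: writing $x=uy$, decomposing $\theta_{y}\theta_{u}$, and using \Cref{Eq:ROrderDomDes} together with \Cref{Eq:ThetaLNonZero} to annihilate the unwanted summands, whence $\mathsf{Kh}(d)=\mathtt{false}$ and $\mathsf{K}(d)=\mathtt{false}$ by \Cref{Eq:KiffKhKM}. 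For the three that are Kostant positive (cases (6)(a), (10), (11)) I would first establish $\mathsf{KM}(\star,d)=\mathtt{true}$ via \Cref{Cor:KMxSmallSup}, having bounded the relevant $x$ using \Cref{Eq:ThetaLNonZero}, \Cref{Eq:ROrderDomDes} and part~(a) of \Cref{Prop:IndecompConjLRCellInv}, and then establish $[\mathsf{Kh}^{\mathsf{ev}}](d)=\mathtt{true}$; by \Cref{Eq:ThetaxL=ThetayLImpliesxly} this only needs checking for pairs $x,y\leq_{R}d$ with $x\sim_{L}y$ but $x\not\sim_{R}y$, giving $\mathsf{Kh}(d)=\mathtt{true}$ and hence $\mathsf{K}(d)=\mathtt{true}$.

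Combining the four groups, the involutions $d$ with $\mathsf{K}(d)=\mathtt{false}$ are precisely those marked $\mathtt{false}$ in \Cref{Table:1} and \Cref{Table:2}, all of \Cref{Table:3}, and the eight negative ones from \Cref{SubSec:RemainingCases}; propagating these along left-cell equivalence (\Cref{Thm:KLeftCellInv}) yields exactly the list in \Cref{Table:4}. The main obstacle is the fourth group: for each negative case, locating a workable pair $(x,y)$ and a factorization $x=uy$ for which the decomposition of $\theta_{y}\theta_{u}$ and the subsequent vanishing of the Jantzen-middle contributions can all be verified (the GAP3 computations), and, for each positive case, certifying that none of the finitely many potentially colliding pairs $(x,y)$ actually collide in $\mathbb{Z}\mathfrak{S}_{7}$ — a computation that is small per pair but involves both the Kazhdan--Lusztig and dual Kazhdan--Lusztig bases of $\mathcal{H}_{7}$.
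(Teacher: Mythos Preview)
Your proposal is correct and follows essentially the same approach as the paper: reduce to the $232$ involutions via \Cref{Thm:KLeftCellInv}, handle $161$ of them by parabolic lifting (\Cref{Table:1}), $29$ by the fully commutative criterion (\Cref{Table:2}), $25$ by consecutive pattern containment (\Cref{Table:3}), and the remaining $17$ (eleven up to the $w_0\!\cdot\!w_0$ symmetry) by the case-by-case analysis of \Cref{SubSec:RemainingCases}, exhibiting $\theta_x L(d)\cong\theta_y L(d)\neq 0$ for the eight negative ones and verifying $\mathsf{KM}(\star,d)$ plus $[\mathsf{Kh}^{\mathsf{ev}}](d)$ for the three positive ones. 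Your description of the mechanics in the fourth group (factor $x=uy$, decompose $\theta_y\theta_u$, kill extraneous summands and Jantzen-middle contributions; for positives, restrict to pairs $x\sim_L y$ via \Cref{Eq:ThetaxL=ThetayLImpliesxly}) matches the paper's arguments exactly.
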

For any $n\in\mathbb{Z}_{\geq 1}$, consider the non-negative integers
\[ \mathbf{p}_{n}:=|\{w\in\mathfrak{S}_{n} \ | \ \mathsf{K}(w)=\mathtt{true}\}| \hspace{2mm} \text{ and } \hspace{2mm} \mathbf{pi}_{n}:=|\{d\in\mathfrak{I}_{n} \ | \ \mathsf{K}(d)=\mathtt{true}\}|,  \]
We now know the first seven terms for both of these sequences:
\[ (\mathbf{p}_{n})_{n\geq 1}=(1,2,6,22,94,480,2631,\dots) \hspace{6mm} (\mathbf{pi}_{n})_{n\geq 1}=(1,2,4,9,21,51,125,\dots). \]
Neither of these sequences (nor their complements $\mathbf{n}_{n}:=n!-\mathbf{p}_{n}$ and $\mathbf{ni}_{n}:=|\mathfrak{I}_{n}|-\mathbf{pi}_{n}$) appear in the \cite{OEIS}. However, the first six terms of $\mathbf{pi}_{n}$ match numerous sequences, and hence knowing the seventh term has allowed us to rule these out. Notably, $\mathbf{pi}_{n}$ agrees with the sequence of \emph{Motzkin numbers} (A001006 in \cite{OEIS}) for the first six terms, but the seventh term differs by two.

For any $n\in\mathbb{Z}_{\geq 1}$ and integer partition $\lambda\in\Lambda_{n}$, consider the non-negative integers
\[ \mathbf{pi}_{n}^{\lambda}:=|\{d\in\mathfrak{I}_{n} \ | \ \mathsf{sh}(d)=\lambda \ \text{and} \ \mathsf{K}(d)=\mathtt{true}\}|. \]
Let $\lambda'$ denote the transpose of $\lambda$. The first time we obtain the inequality $\mathbf{pi}_{n}^{\lambda}\neq\mathbf{pi}_{n}^{\lambda'}$ is for the case $n=6$ and partitions $\lambda=(4,1^2)$ and $\lambda'=(3,1^3)$. In particular, we have
\[ \mathbf{pi}_{6}^{(4,1^2)}=8\neq7=\mathbf{pi}_{6}^{(3,1^3)}. \]
When $n=7$, we now know $\mathbf{pi}_{7}^{\lambda}=\mathbf{pi}_{7}^{\lambda'}$ for $\lambda\in\Lambda_{7}\backslash\{(5,1^2),(3,1^4),(4,2,1),(3,2,1^2)\}$, while 
\[ \mathbf{pi}_{7}^{(5,1^2)}=12\neq9=\mathbf{pi}_{7}^{(3,1^4)} \hspace{2mm} \text{and } \hspace{2mm} \mathbf{pi}_{7}^{(4,2,1)}=19\neq21=\mathbf{pi}_{7}^{(3,2,1^2)}. \]

\section{Indecomposability Conjecture for $\mathfrak{sl}_{7}$}
\label{Sec:IndecompConjA6}

In this section we show that the Indecomposability Conjecture \eqref{Conj:IndecompConj} holds in $\mathfrak{sl}_{7}$. By (b) of \Cref{Prop:IndecompConjLRCellInv}, it suffices to prove $\mathsf{KM}(\star,d)=\mathtt{true}$ for all $d\in\mathfrak{I}_{7}$. We begin with the following:

\begin{lem}\label{Lem:ParaLifts7Ind}
For any $I\subsetneq S_{7}$ and $w\in\mathfrak{S}_{7}(I)$, we have that $\mathsf{KM}(\star,ww_{0}^{I}w_{0})=\mathtt{true}$.
\end{lem}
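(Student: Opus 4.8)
The statement is a direct analogue of \Cref{Cor:KMxSmallSup}, but here the hypothesis is on the size of $\mathsf{Sup}(w w_0^I w_0)$ being potentially large; the real constraint is that $I \subsetneq S_7$ is a proper subset, so $|I| \leq 5$. The plan is to reduce the indecomposability question for $\theta_x L(w w_0^I w_0)$ in $\mathcal{O}_0^{\mathbb{Z}}(\mathfrak{sl}_7)$ to an indecomposability question inside the parabolic category $\mathcal{O}_0^{\mathbb{Z}}(\mathfrak{l})$ for the Levi $\mathfrak{l}$ attached to $I$, and then to factor that through the product decomposition $\mathcal{O}_0^{\mathbb{Z}}(\mathfrak{l}) \simeq \mathcal{O}_0^{\mathbb{Z}}(\mathfrak{sl}_{n_1}) \times \cdots \times \mathcal{O}_0^{\mathbb{Z}}(\mathfrak{sl}_{n_k})$, where each $n_i \leq |I| + 1 \leq 6$. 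Since the Indecomposability Conjecture is known for $\mathfrak{sl}_{n_i}$ with $n_i \leq 6$ by \cite{CMZ19}, this closes the argument.

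Concretely, I would proceed as follows. First, by (b) of \Cref{Prop:IndecompConjLRCellInv}, $\mathsf{KM}(\star,-)$ is constant on left cells, and by \eqref{Thm:ParaLiftK}-style reasoning (really the left-cell analogue), it suffices to treat $d := w w_0^I w_0$ directly; so fix an arbitrary $x \in \mathfrak{S}_7$ with $\theta_x L(d) \neq 0$ and show $\theta_x L(d)$ is indecomposable. Next, I would invoke \eqref{Thm:ParaLiftK} together with left-cell invariance to pass to $\mathsf{K}_{\mathfrak{l}}$ — but for indecomposability the cleaner route is \Cref{Thm:IndecomposabilityFromParabolic}: since $d = w w_0^I w_0$ and $w_0^I w_0 = z_0$ is the longest element of $\mathsf{X}(I)$ (or rather $w_0^I w_0 \in \mathsf{X}(I)$ is the minimal coset representative of the coset $\mathfrak{S}_7(I) w_0$, which needs to be checked), we can write $d = z y$ with $y \in \mathfrak{S}_7(I)$ and $z \in \mathsf{X}(I)$; here $y = w$ and $z = w_0^I w_0$. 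Then \Cref{Thm:IndecomposabilityFromParabolic} gives that $\theta_x L(d)$ is indecomposable in $\mathcal{O}_0^{\mathbb{Z}}$ if and only if $\theta_x L_{\mathfrak{l}}(w)$ is indecomposable in $\mathcal{O}_0^{\mathbb{Z}}(\mathfrak{l})$ — and in particular the two are simultaneously nonzero, so $\theta_x L_{\mathfrak{l}}(w) \neq 0$.

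Finally, apply the equivalence $\mathcal{F}$ of \Cref{SubSec:CategoriesOl}: $\mathcal{F}(\theta_x L_{\mathfrak{l}}(w)) = \theta_{x_1} L(w_1) \boxtimes \cdots \boxtimes \theta_{x_k} L(w_k)$ (using the compatibility $\mathcal{F}(\theta_x L_{\mathfrak{l}}(w)) \cong \mathcal{G}(\theta_x) \mathcal{F}(L_{\mathfrak{l}}(w))$, valid once $x \in \mathfrak{S}_7(I)$ — which holds because $\theta_x L_{\mathfrak{l}}(w) \neq 0$ forces $x \leq_R w^{-1}$ inside $\mathfrak{S}_7(I)$ by \eqref{Eq:ThetaLNonZero}, applied in the parabolic setting). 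An external tensor product of nonzero modules is indecomposable if and only if each factor is indecomposable, so $\theta_x L_{\mathfrak{l}}(w)$ is indecomposable iff each $\theta_{x_i} L(w_i)$ in $\mathcal{O}_0^{\mathbb{Z}}(\mathfrak{sl}_{n_i})$ is indecomposable. Each factor is nonzero (since the box product is nonzero), and since $n_i \leq |I_i| + 1 \leq |I| + 1 \leq 6$, the Indecomposability Conjecture for $\mathfrak{sl}_{n_i}$ with $n_i \leq 6$ from \cite{CMZ19} yields that each $\theta_{x_i} L(w_i)$ is indeed indecomposable. Hence $\theta_x L(d)$ is indecomposable, and $\mathsf{KM}(\star, w w_0^I w_0) = \mathtt{true}$.

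The main obstacle is the bookkeeping at the reduction step: verifying that $d = w w_0^I w_0$ genuinely has the factorization $d = zy$ with $z = w_0^I w_0 \in \mathsf{X}(I)$ and $y = w \in \mathfrak{S}_7(I)$, so that \Cref{Thm:IndecomposabilityFromParabolic} applies verbatim — this is a standard fact about parabolic cosets ($w_0^I w_0$ is the longest element of $\mathsf{X}(I)$ and lengths add, $\ell(w w_0^I w_0) = \ell(w) + \ell(w_0^I w_0)$ when $w \in \mathfrak{S}_7(I)$), but it needs to be stated cleanly. Everything else is a routine assembly of results already in the excerpt, mirroring the proof of \Cref{Cor:KMxSmallSup} almost line for line.
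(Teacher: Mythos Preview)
Your approach has a genuine gap. You attempt to apply \Cref{Thm:IndecomposabilityFromParabolic} to $d = ww_0^I w_0$ with an arbitrary $x \in \mathfrak{S}_7$, but that theorem requires $x \in \mathfrak{S}_7(I)$. Your justification for this---that $\theta_x L_{\mathfrak{l}}(w) \neq 0$ forces $x \in \mathfrak{S}_7(I)$---is circular: the object $\theta_x L_{\mathfrak{l}}(w)$ is only defined once $x$ already lies in $\mathfrak{S}_7(I)$, since the projective functors $\theta_x \in \mathcal{P}_0(\mathfrak{l})$ are indexed by the Weyl group of $\mathfrak{l}$. And the desired conclusion is simply false: for $n=3$, $I=\{s_1\}$, $w=e$, one gets $d = s_2 s_1$, and then $x = s_1 s_2 = d^{-1}$ satisfies $x \leq_R d^{-1}$, hence $\theta_x L(d) \neq 0$, while $x \notin \mathfrak{S}_3(I)$. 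There is also a bookkeeping error in the factorization: with $z = w_0^I w_0$ and $y = w$ you get $zy = w_0^I w_0 w$, not $w w_0^I w_0$, so \Cref{Thm:IndecomposabilityFromParabolic} does not even apply as written.

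The paper's proof avoids all of this by using the symmetry (c) of \Cref{Prop:IndecompConjLRCellInv}, namely $\mathsf{KM}(x,y) = \mathsf{KM}(y^{-1}w_0, w_0 x^{-1})$. With $y = ww_0^I w_0$ this yields
\[
\mathsf{KM}(x, ww_0^I w_0) = \mathsf{KM}(w_0 w_0^I w^{-1} w_0,\, w_0 x^{-1}).
\]
The first argument on the right is the $w_0$-conjugate of $w_0^I w^{-1} \in \mathfrak{S}_7(I)$, hence has support of size at most $|I| \leq 5$, and \Cref{Cor:KMxSmallSup} finishes immediately. In other words, the correct move is to swap the roles of functor and module so that the small-support constraint lands on the \emph{functor} index, where \Cref{Cor:KMxSmallSup} applies with no restriction on the second argument.
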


\begin{proof}
For any $x\in\mathfrak{S}_{n}$, by (c) of \Cref{Prop:IndecompConjLRCellInv} we have
\[ \mathsf{KM}(x,ww_{0}^{I}w_{0})=\mathsf{KM}(w_{0}w_{0}^{I}w^{-1}w_{0},x^{-1}w_{0}). \]
We know that $|\mathsf{Sup}(w_{0}^{I}w^{-1})|\leq 5$ and so $|\mathsf{Sup}(w_{0}w_{0}^{I}w^{-1}w_{0})|\leq 5$ since conjugation by $w_{0}$ permutes the simple transpositions. Hence, by \Cref{Cor:KMxSmallSup} we have that $\mathsf{KM}(w_{0}w_{0}^{I}w^{-1}w_{0},x^{-1}w_{0})=\mathtt{true}$ for all $x\in\mathfrak{S}_{n}$, which implies $\mathsf{KM}(\star,ww_{0}^{I}w_{0})=\mathtt{true}$.
\end{proof}

For $d$ belonging to \Cref{Table:1}, we have $\mathsf{KM}(\star,d)=\mathtt{true}$ by (b) of \Cref{Prop:IndecompConjLRCellInv} and \Cref{Lem:ParaLifts7Ind}. Consulting Section 4.3 of \cite{MMM24}, we have $\mathsf{KM}(\star,d)=\mathtt{true}$ for all $d$ belonging to \Cref{Table:2}. Also, cases (6), (10), and (11) from \Cref{SubSec:RemainingCases} are Kostant positive, hence they also satisfy the indecomposability conjecture by \Cref{Eq:KiffKhKM}. 

Therefore, we only need to prove $\mathsf{KM}(\star,d)=\mathtt{true}$ for $d$ a Kostant negative case in \Cref{SubSec:RemainingCases} and for $d$ belonging to \Cref{Table:3}. We list all such involutions here:

\begin{multicols}{4}
\begin{itemize}
\item[(1)]
\begin{itemize}
\item[(a)] $\mathsf{124_{1}56_{4}}$
\item[(b)] $\mathsf{13_{1}456_{3}}$
\end{itemize}
\item[(2)]
\begin{itemize}
\item[(a)] $\mathsf{13_{1}45_{3}}$
\item[(b)] $\mathsf{24_{2}56_{4}}$
\end{itemize}
\item[(3)]
\begin{itemize}
\item[(a)] $\mathsf{14_{3}5_{3}6_{4}}$
\item[(b)] $\mathsf{2_{1}3_{1}4_{2}6}$
\end{itemize}
\item[(4)]
\begin{itemize}
\item[(a)] $\mathsf{24_{2}5_{4}}$
\item[(b)] $\mathsf{3_{2}45_{3}}$
\end{itemize}
\end{itemize}
\end{multicols}

\begin{multicols}{4}
\begin{itemize}
\item[(5)]
\begin{itemize}
\item[(a)] $\mathsf{23_{2}4_{2}}$
\item[(b)] $\mathsf{34_{3}5_{3}}$
\end{itemize}
\item[(6)]
\begin{itemize}
\item[(a)] $\mathsf{12_{1}45_{4}}$
\item[(b)] $\mathsf{23_{2}56_{5}}$
\end{itemize}
\item[(7)]
\begin{itemize}
\item[(a)] $\mathsf{123_{1}56_{5}}$
\item[(b)] $\mathsf{12_{1}456_{4}}$
\end{itemize}
\item[(8)]
\begin{itemize}
\item[(a)] $\mathsf{134_{3}}$
\item[(b)] $\mathsf{34_{3}6}$
\end{itemize}
\end{itemize}
\end{multicols}

\begin{multicols}{4}
\begin{itemize}
\item[(9)]
\begin{itemize}
\item[(a)] $\mathsf{135_{3}6_{5}}$
\item[(b)] $\mathsf{2_{1}34_{2}6}$
\end{itemize}
\item[(10)]
\begin{itemize}
\item[(a)] $\mathsf{2_{1}3_{1}5_{2}6_{5}}$
\item[(b)] $\mathsf{2_{1}4_{3}5_{2}6_{4}}$
\end{itemize}
\item[(11)]
\begin{itemize}
\item[(a)] $\mathsf{23_{2}4_{2}6}$
\item[(b)] $\mathsf{134_{3}5_{3}}$
\end{itemize}
\item[(12)]
\begin{itemize}
\item[(a)] $\mathsf{23_{2}5}$
\item[(b)] $\mathsf{245_{4}}$
\end{itemize}
\end{itemize}
\end{multicols}

\begin{multicols}{4}
\begin{itemize}
\item[(13)]
\begin{itemize}
\item[(a)] $\mathsf{23_{2}6}$
\item[(b)] $\mathsf{145_{4}}$
\end{itemize}
\item[(14)]
\begin{itemize}
\item[(a)] $\mathsf{234_{3}5_{2}}$
\item[(b)] $\mathsf{23_{2}45_{2}}$
\end{itemize}
\item[(15)]
\begin{itemize}
\item[(a)] $\mathsf{45_{4}}$
\item[(b)] $\mathsf{23_{2}}$
\end{itemize}
\item[(16)]
\begin{itemize}
\item[(a)] $\mathsf{1345_{3}}$
\item[(b)] $\mathsf{234_{2}6}$
\end{itemize}
\end{itemize}
\end{multicols}
\vspace{-7mm}
\[
\text{(17)} \hspace{2mm} \mathsf{12_{1}56_{5}} \hspace{10mm}
\text{(18)} \hspace{2mm} \mathsf{2_{1}35_{2}6_{5}} \hspace{10mm}
\text{(19)} \hspace{2mm} \mathsf{23_{2}4_{2}5_{2}} \hspace{10mm}
\text{(20)} \hspace{2mm} \mathsf{134_{3}6}
\]
\vspace{-2mm}
\[
\text{(21)} \hspace{2mm} \mathsf{34_{3}} \hspace{10mm} 
\text{(22)} \hspace{2mm} \mathsf{13_{2}4_{1}56_{3}}
\]
We have paired up these cases according to the natural symmetry of the root system, which pairs $d$ with $w_{0}dw_{0}$. So, we only need to consider one involution for each pair. For any $d$ above, we want to show $\mathsf{KM}(x,d)=\mathtt{true}$ for $x\in\mathfrak{S}_{7}$. We now reduce the elements $x$ needed to be considered:

\begin{lem}\label{Lem:ICSimp}
To prove that the Indecomposability conjecture holds in $\mathfrak{sl}_{7}$, it suffices to confirm that $\mathsf{KM}(x,d)=\mathtt{true}$ for all $d$ in the above $22$ cases, and for all $x\in\mathfrak{I}_{7}$ such that $\mathsf{sh}(d)\prec\mathsf{sh}(x)$ and $D(x)\subset D(d)$. Moreover, only the following list of elements $x$ need to be considered:
{\footnotesize
\begin{longtblr}[
  caption = {Suffices to check $\mathsf{KM}(x,d)$ for $d$ in the above $22$ cases and $x$ presented here.},
  label = {Table:5},
]{
  colspec = {|ccc|ccc|},
  rowhead = 1,
  row{even} = {gray9},
  row{1} = {teal9},
}
\hline
$x\in\mathfrak{I}_{7}$ & Shape $\mathsf{sh}(x)$ & Descent set $D(x)$ & $x\in\mathfrak{I}_{7}$ & Shape $\mathsf{sh}(x)$ & Descent set $D(x)$ \\ \hline

$\mathsf{123456_{1}}$ & $(5,1,1)$ & $\{1,6\}$ & $\mathsf{2_{1}3456_{2}}$ & $(4,2,1)$ & $\{2,6\}$  \\ 

$\mathsf{1235_{1}6_{5}}$ & $(4,2,1)$ & $\{1,5\}$ & $\mathsf{3_{1}4_{2}56_{3}}$ & $(3,3,1)$ & $\{3,6\}$  \\ 

$\mathsf{3_{1}4_{1}5_{2}6_{3}}$ & $(3,3,1)$ & $\{3,4\}$ & $\mathsf{2_{1}4_{1}5_{2}6_{4}}$ & $(3,3,1)$ & $\{2,4\}$  \\ 

$\mathsf{2_{1}4_{2}56_{4}}$ & $(3,3,1)$ & $\{2,4,6\}$ & $\mathsf{3_{1}4_{3}5_{2}6_{3}}$ & $(3,3,1)$ & $\{3,5\}$  \\ 

$\mathsf{2_{1}3_{1}5_{2}6_{5}}$ & $(3,3,1)$ & $\{2,3,5\}$ & $\mathsf{2_{1}35_{2}6_{5}}$ & $(3,3,1)$ & $\{2,5\}$  \\ 

$\mathsf{2_{1}4_{3}5_{2}6_{4}}$ & $(3,3,1)$ & $\{2,4,5\}$ & $\mathsf{13_{1}5_{3}6_{5}}$ & $(3,3,1)$ & $\{1,3,5\}$  \\ 

$\mathsf{14_{1}5_{3}6_{4}}$ & $(3,3,1)$ & $\{1,4\}$ & $\mathsf{2_{1}3_{2}4_{3}5_{1}6_{2}}$ & $(3,2,2)$ & $\{2,5\}$  \\ 

$\mathsf{2_{1}3_{2}4_{1}56_{2}}$ & $(3,2,2)$ & $\{2,4,6\}$ & $\mathsf{2_{1}3_{1}456_{2}}$ & $(3,2,2)$ & $\{2,3,6\}$  \\ 
\hline
\end{longtblr}
}
\end{lem}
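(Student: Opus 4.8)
The plan is to prove both assertions by successively throwing away pairs $(x,d)$ for which $\mathsf{KM}(x,d)$ is automatically $\mathtt{true}$, starting from the reduction (already carried out above, using left--cell invariance in \Cref{Prop:IndecompConjLRCellInv}(b), the earlier results for \Cref{Table:1} and \Cref{Table:2}, the Kostant--positive cases via \eqref{Eq:KiffKhKM}, and the $w_0$--symmetry) that the Indecomposability conjecture in $\mathfrak{sl}_7$ is equivalent to $\mathsf{KM}(\star,d)=\mathtt{true}$ for the $22$ listed values of $d$. Fix one such $d$ and write $\mathsf{KM}(\star,d)=\bigwedge_{x\in\mathfrak{S}_7}\mathsf{KM}(x,d)$. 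First I would remove every $x$ with $\theta_x L(d)=0$, since $\mathsf{KM}(x,d)=\mathtt{true}$ vacuously there; by \eqref{Eq:ThetaLNonZero} and $d=d^{-1}$ this leaves only $x\leq_R d$. Next, \Cref{Prop:IndecompConjLRCellInv}(a) (in its pointwise form, exactly as it is used in \Cref{SubSec:RemainingCases}) says that $\mathsf{KM}(x,d)$ is constant on right cells, so one may take $x$ to be the unique involution of its right cell, i.e. $x\in\mathfrak{I}_7$. For such $x$, \eqref{Eq:ROrderDomDes} then gives $\mathsf{sh}(d)\preceq\mathsf{sh}(x)$ and $D(x)=D_L(x)\subseteq D_L(d)=D(d)$.

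It remains to treat the boundary case $\mathsf{sh}(x)=\mathsf{sh}(d)$. Here $x$ and $d$ lie in a common two-sided Kazhdan--Lusztig cell, so $x\leq_R d$ together with the constancy of Lusztig's $a$-function on two-sided cells forces $x\sim_R d$, hence $x=d$ (unique involution per right cell). The value $\mathsf{KM}(d,d)$ is then disposed of separately: it is $\mathtt{true}$ by \Cref{Cor:KMxSmallSup} whenever $|\mathsf{Sup}(d)|\le 5$, and for the handful of full-support $d$ among the $22$ one verifies $\mathsf{KM}(d,d)=\mathtt{true}$ directly with GAP3 (this can also be seen from general properties of Duflo involutions, every involution being Duflo in type $A$). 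This establishes the first assertion. For the \emph{moreover} part, I would observe that if an involution $x$ satisfying the two stated conditions has $|\mathsf{Sup}(x)|\le 5$, then $\mathsf{KM}(x,\star)=\mathtt{true}$ by \Cref{Cor:KMxSmallSup}, so $\mathsf{KM}(x,d)=\mathtt{true}$ and $x$ may be omitted; only full-support $x$ survive. For each of the $22$ values of $d$ one then enumerates, with GAP3, the involutions $x$ of full support with $\theta_x L(d)\neq0$ (equivalently $x\leq_R d$), $\mathsf{sh}(d)\prec\mathsf{sh}(x)$, and $D(x)\subset D(d)$; the union of these sets over the $22$ values of $d$ is exactly the list in \Cref{Table:5}, recorded there along with the corresponding shapes and descent sets.

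Conceptually this is a bookkeeping lemma, so I do not anticipate a genuine obstruction. The two points that need care are the reduction $\mathsf{sh}(x)=\mathsf{sh}(d)\Rightarrow x=d$ with the ad hoc handling of $\mathsf{KM}(d,d)$ for the full-support $d$'s, and the fact that the ``moreover'' claim rests on a finite but sizeable computer enumeration over the $22$ cases; the latter is what I expect to be the most laborious to execute, though it is entirely routine in principle.
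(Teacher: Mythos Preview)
Your proposal is correct in spirit and lands on the right list, but it diverges from the paper's proof at one key point and is looser at another.

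For the passage from $\mathsf{sh}(d)\preceq\mathsf{sh}(x)$ to the strict inequality $\mathsf{sh}(d)\prec\mathsf{sh}(x)$, the paper does not reduce to $x=d$ and then argue about $\mathsf{KM}(d,d)$; it simply invokes \cite[Lemma~6]{KiM16} (and Section~5.2 there), which disposes of the equal-shape case in one stroke. Your route via two-sided cells plus the Duflo property is valid in type $A$, but it is more circuitous, and the throwaway ``verify $\mathsf{KM}(d,d)$ directly with GAP3'' is not really a method: GAP3 computes characters, not indecomposability. The Duflo remark is what actually carries that step, and it amounts to re-deriving part of what \cite{KiM16} already packages.

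For the ``moreover'' part, the paper does not do a per-$d$ GAP3 enumeration and then take a union. It argues structurally: for twenty of the twenty-two $d$'s one has $(3,2,2)\preceq\mathsf{sh}(d)$, so only the seven shapes dominating $(3,2,2)$ matter; every involution of shape $(7),(6,1),(5,2),(4,3)$ has support $\le5$ (handled by \Cref{Cor:KMxSmallSup}); and the full-support involutions of shapes $(5,1,1),(4,2,1),(3,3,1)$ are exactly those in \Cref{Table:5}. The two exceptional $d$'s, namely cases (11) and (19), are then treated by hand, bringing in shapes $(4,1,1,1),(3,2,2),(3,2,1,1)$ and filtering by $D(x)\subset D(d)$; this is what produces the three extra $(3,2,2)$ entries. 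Your description also adds the filter $\theta_xL(d)\neq0$ before taking the union, which the paper does not use in building \Cref{Table:5} (indeed many pairs $(x,d)$ with $x$ in the table later turn out to give zero); this does not affect correctness of the reduction, but it is not how the table is actually obtained.
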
 

\begin{proof}
From above, we know it suffices to confirm $\mathsf{KM}(x,d)=\mathtt{true}$ for $d$ in the above $22$ cases and $x\in\mathfrak{S}_{7}$. But, by (a) of \Cref{Prop:IndecompConjLRCellInv}, \Cref{Eq:ThetaLNonZero}, and \Cref{Eq:ROrderDomDes}, we only need to consider $x\in\mathfrak{I}_{7}$, such that $\mathsf{sh}(d)\preceq\mathsf{sh}(x)$ and $D(x)\subset D(d)$. Also, the inequality $\mathsf{sh}(d)\preceq\mathsf{sh}(x)$ can be improved to a strict inequality $\mathsf{sh}(d)\prec\mathsf{sh}(x)$ by \cite[Lemma~6]{KiM16} (see also Section 5.2 therein).

As for \Cref{Table:5}, all cases $d$ out of the $22$ cases presented above, except $(11)$ and $(19)$, are such that $(3,2,2)\preceq\mathsf{sh}(d)$. For these $20$ cases, it thus suffices to confirm $\mathsf{KM}(x,d)=\mathtt{true}$ for all $x\in\mathfrak{I}_{7}$ such that $(3,2,2)\prec\mathsf{sh}(x)$, or equivalently, all $x\in\mathfrak{I}_{7}$ whose shape is one of the following seven:
\[ {\tiny\ydiagram{7}}, \hspace{6mm} {\tiny\ydiagram{6,1}}, \hspace{6mm} {\tiny\ydiagram{5,2}}, \hspace{6mm} {\tiny\ydiagram{4,3}}, \]
\[ {\tiny\ydiagram{5,1,1}}, \hspace{6mm} {\tiny\ydiagram{4,2,1}}, \hspace{6mm} {\tiny\ydiagram{3,3,1}}. \]
For $\lambda\in\{(7), (6,1), (5,2), (4,3)\}$, one can check that any involution $x\in\mathfrak{I}_{7}$ such that $\mathsf{sh}(x)=\lambda$ satisfies $|\mathsf{Sup}(x)|\leq 5$. Thus by Corollary~\ref{Cor:KMxSmallSup} we have $\mathsf{KM}(x,\star)=\mathtt{true}$, hence such involutions need not be considered. As for shapes $(5,1,1)$, $(4,2,1)$, and $(3,3,1)$, one can check that any involution of such a shape which has maximal support (i.e. a support of size $6$) is present in \Cref{Table:5}, with the others which do not have maximal support not needed to be considered due to \Cref{Cor:KMxSmallSup}. For example, the following is the collection of all $35$ involutions of shape $(4,2,1)$:
\begin{center}
\begin{tabular}[h]{ccccccc}
$\mathsf{356_{5}}$ & $\mathsf{4_{3}56_{4}}$ & $\mathsf{34_{3}6}$ & $\mathsf{4_{3}5_{3}6_{4}}$ & $\mathsf{35_{3}6_{5}}$ & $\mathsf{2_{1}3_{1}4_{2}}$ & $\mathsf{256_{5}}$  \\
$\mathsf{245_{4}}$ & $\mathsf{2456_{4}}$ & $\mathsf{2_{1}34_{2}}$ & $\mathsf{3_{2}45_{3}}$ & $\mathsf{3_{2}456_{3}}$ & $\mathsf{23_{2}6}$ & $\mathsf{23_{2}5}$ \\
$\mathsf{2_{1}345_{2}}$ & $\mathsf{3_{2}4_{2}5_{3}}$ & $\mathsf{234_{2}6}$ & $\mathsf{24_{2}5_{4}}$ & $\mathsf{2_{1}3456_{2}}$ & $\mathsf{235_{2}6_{5}}$ & $\mathsf{156_{5}}$ \\
$\mathsf{145_{4}}$ & $\mathsf{1456_{4}}$ & $\mathsf{134_{3}}$ & $\mathsf{1345_{3}}$ & $\mathsf{13456_{3}}$ & $\mathsf{12_{1}6}$ & $\mathsf{12_{1}5}$ \\
$\mathsf{12_{1}4}$ & $\mathsf{123_{1}6}$ & $\mathsf{123_{1}5}$ & $\mathsf{13_{1}4_{3}}$ & $\mathsf{1234_{1}6}$ & $\mathsf{124_{1}5_{4}}$ & $\mathsf{1235_{1}6_{5}}$
\end{tabular}
\end{center}
Only $\mathsf{2_{1}3456_{2}}$ and $\mathsf{1235_{1}65}$ have maximal support and these are the only involutions in \Cref{Table:5} with shape $(4,2,1)$. So it remains to explain why \Cref{Table:5} is sufficient for cases $(11)$ and $(19)$.

For case $(11)$, we have $d=23_{2}4_{2}6$ and $\mathsf{sh}(d)=(3,2,1,1)$. We only need to consider $x\in\mathfrak{I}_{7}$ with maximal support and $\mathsf{sh}(d)\prec\mathsf{sh}(x)$. Such an $x$ has shape given by one of the seven mentioned above (which are already accounted for in \Cref{Table:5}), or either $(4,1,1,1)$ or $(3,2,2)$. It can be checked that all such $x$ of shape $(4,1,1,1)$, along with their descent sets $D(x)$, are as follows:
\begin{center}
\begin{tabular}[h]{cc}
$\mathsf{12_{1}3456_{1}}$ & $\{\mathsf{1,2,6}\}$ \\
$\mathsf{12345_{4}6_{1}}$ & $\{\mathsf{1,5,6}\}$ \\
$\mathsf{1234_{3}56_{1}}$ & $\{\mathsf{1,4,6}\}$ \\
$\mathsf{123_{2}456_{1}}$ & $\{\mathsf{1,3,6}\}$
\end{tabular}
\end{center}
Similarly, all maximal support involutions $x$ of shape $(3,2,2)$, and their descent sets $D(x)$, are:
\begin{center}
\begin{tabular}[h]{cc||cc}
$\mathsf{2_{1}3_{2}4_{3}5_{1}6_{2}}$ & $\{\mathsf{2,5}\}$ & $\mathsf{2_{1}3_{2}4_{1}56_{2}}$ & $\{\mathsf{2,4,6}\}$ \\
$\mathsf{2_{1}3_{1}456_{2}}$ & $\{\mathsf{2,3,6}\}$ & $\mathsf{13_{1}456_{3}}$ & $\{\mathsf{1,3,6}\}$ \\
$\mathsf{124_{1}56_{4}}$ & $\{\mathsf{1,4,6}\}$ & $\mathsf{13_{2}4_{1}56_{3}}$ & $\{\mathsf{1,3,4,6}\}$ \\
$\mathsf{124_{3}5_{1}6_{4}}$ & $\{\mathsf{1,4,5}\}$ & $\mathsf{13_{2}4_{3}5_{1}6_{3}}$ & $\{\mathsf{1,3,5}\}$
\end{tabular}
\end{center}
We have $D(d)=\{2,3,4,6\}$. Hence, by the condition of inclusion of descent sets, the only maximal support involutions $x$ of shape $(4,1,1,1)$ and $(3,2,2)$ which we need to consider are $\mathsf{2_{1}3_{2}4_{1}56_{2}}$ and $\mathsf{2_{1}3_{1}456_{2}}$, both of shape $(3,2,2)$. These two cases are included in \Cref{Table:5}. So far, this accounts for all entries in \Cref{Table:5} with the exception of $2_{1}3_{2}4_{3}5_{1}6_{2}$. 

For case $(19)$, we have $d=23_{2}4_{2}5_{2}$, $\mathsf{sh}(d)=(3,1,1,1,1)$, and $D(d)=\{2,3,4,5\}$.  Thus we only need to consider $x\in\mathfrak{I}_{7}$ of maximal support and whose shape is one of the nine mentioned above or $(3,2,1,1)$. For shape $(4,1,1,1)$, the descent set condition rules out the four involutions listed above, while for shape $(3,2,2)$, the only involutions which we need to consider are the same two $\mathsf{2_{1}3_{2}4_{1}56_{2}}$ and $\mathsf{2_{1}3_{1}456_{2}}$, for case $(11)$, and the addition case $\mathsf{2_{1}3_{2}4_{3}5_{1}6_{2}}$. Hence the lemma follows as long as no maximal support $x\in\mathfrak{I}_{7}$ of shape $(3,2,1,1)$ needs to be considered with regard to this $d$. One can check that there are precisely $11$ involutions $x$ with maximal support and of shape $(3,2,1,1)$, and for all of these either $\mathsf{1}$ or $\mathsf{6}$ belongs to its descent set. Thus $D(x)$ is not a subset of $D(d)$, meaning no such involutions need to be considered.  
\end{proof}

Therefore, to prove that the Indecomposability Conjecture holds in $\mathfrak{sl}_{7}$, we only need to confirm that $\mathsf{KM}(x,d)=\mathtt{true}$ for $d$ in cases $(1)$ to $(22)$ above, and for all $x$ in \Cref{Table:5}. Most of these cases will be shown by checking that $[\theta_{x}L(d)]=0$, and hence $\theta_{x}L(d)=0$. For the non-zero situations, the following lemma will be helpful: Firstly, for any $x,y,z\in\mathfrak{S}_{n}$, let $[\theta_{x}L(y): L(z)]$ denote the graded composition multiplicity of $L(z)$ within $\theta_{x}L(y)$. Explicitly, by \cite[Proposition~3.3]{KMM23}, 
\[ [\theta_{x}L(y): L(z)]=[\underline{H}_{y}](\underline{H}_{z}\underline{H}_{x^{-1}})\in\mathbb{Z}_{\geq 0}[v,v^{-1}], \]
i.e. the coefficient of $\underline{H}_{y}$ in the product $\underline{H}_{z}\underline{H}_{x^{-1}}$ when expressed in terms of the Kazhdan-Lusztig basis. For $i\in\mathbb{Z}_{\geq 0}$, let $[\theta_{x}L(y): L(z)\langle i\rangle]$ denote the composition multiplicity of $L(z)\langle i\rangle$ in $\theta_{x}L(y)$ which is the coefficient of $v^{-i}$ in $[\theta_{x}L(y): L(z)]$. Lastly, we let $\mathbf{a}:\mathfrak{S}_{n}\rightarrow\mathbb{Z}_{\geq 0}$ be Lusztig's $\mathbf{a}$-function (see \cite{Lu87}). This function is uniquely defined by the two properties of being constant on elements sharing a shape, and $\mathbf{a}(w_{0}^{I})=\ell(w_{0}^{I})$ for any $I\subset S_{n}$.

\begin{lem}\label{Lem:IndCoeff1AtaFunc}
For $x\in\mathfrak{I}_{n}$ and $d\in\mathfrak{S}_{n}$, the module $\theta_{x}L(d)$ is indecomposable whenever
\[ [\theta_{x}L(d):L(d)\langle\mathbf{a}(x)\rangle]=[v^{-\mathbf{a}(x)}][\theta_{x}L(d): L(d)]=1. \] 
\end{lem}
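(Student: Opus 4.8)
The plan is to show that the graded endomorphism algebra $E:=\mathrm{End}_{\mathcal{O}_{0}^{\mathbb{Z}}}(\theta_{x}L(d))$ is local; since $\theta_{x}L(d)$ has finite length this is equivalent to the indecomposability of $\theta_{x}L(d)$ (even after forgetting the grading). Write $E=\bigoplus_{i\in\mathbb{Z}}E^{i}$, where $E^{i}:=\mathrm{Hom}_{\mathcal{O}_{0}^{\mathbb{Z}}}(\theta_{x}L(d),\theta_{x}L(d)\langle i\rangle)$, so that $E^{0}$ is the space of grading-preserving endomorphisms. I will establish that (a) $E$ is non-negatively graded, i.e. $E^{i}=0$ for $i<0$, and that (b) $\dim_{\mathbb{C}}E^{0}=[\theta_{x}L(d):L(d)\langle\mathbf{a}(x)\rangle]$. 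Granting (a) and (b), the hypothesis gives $E^{0}\cong\mathbb{C}$, so $E^{>0}:=\bigoplus_{i>0}E^{i}$ is a proper two-sided ideal with $E/E^{>0}\cong\mathbb{C}$; as $E$ is finite dimensional and concentrated in non-negative degrees, $(E^{>0})^{N}\subseteq\bigoplus_{i\geq N}E^{i}=0$ for $N\gg 0$, so $E^{>0}$ is nilpotent. Hence $E^{>0}=\mathrm{rad}(E)$ and $E$ is local, as desired.

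For (a), the key point is that $\theta_{x}$, being indexed by an involution, is self-adjoint up to the shift $\langle 2\mathbf{a}(x)\rangle$; this is the involutive case of the (bi)adjunctions between projective functors recorded in \cite{KMM23}, and for $x$ a simple reflection it is consistent with $\mathbf{a}(s)=1$ and \Cref{Prop:JantsenMiddleStructure}. Adjunction then identifies $E^{i}$ with $\mathrm{Hom}_{\mathcal{O}_{0}^{\mathbb{Z}}}(L(d),\theta_{x}\theta_{x}L(d)\langle i+2\mathbf{a}(x)\rangle)$, and positivity of the Kazhdan--Lusztig grading (the result of \cite{KMM23} invoked in the introduction to conclude that ``the endomorphism algebra of the resulting module is positively graded'') forces this to vanish for $i<0$. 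Graded self-duality of $\theta_{x}L(d)$ (projective functors commute with the duality on $\mathcal{O}_{0}^{\mathbb{Z}}$ and $L(d)$ is self-dual) moreover gives $E^{i}\cong E^{-i}$, so that in fact $E=E^{0}$; this is not needed below.

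For (b), I would decategorify $\theta_{x}\theta_{x}$. Writing $\underline{H}_{x}\underline{H}_{x}=\sum_{z}h_{x,x,z}(v)\underline{H}_{z}$ with $h_{x,x,z}\in\mathbb{Z}_{\geq0}[v,v^{-1}]$ (\Cref{Rmk:NonNegCoeffPolys} controls the signs), the functor $\theta_{x}\theta_{x}$ is, in $\mathcal{P}_{0}^{\mathbb{Z}}$, a direct sum of shifted copies of the $\theta_{z}$ with graded multiplicities $h_{x,x,z}(v)$, and hence the same holds for $\theta_{x}\theta_{x}L(d)$ among the modules $\theta_{z}L(d)$. Because every involution in $\mathfrak{S}_{n}$ is a distinguished (Duflo) involution — each left cell contains a unique involution, which is therefore forced to be the distinguished one — the coefficient of $v^{\mathbf{a}(x)}$ in $h_{x,x,x}$ equals $1$, while the corresponding extreme coefficients of $h_{x,x,z}$ for $z\neq x$ vanish; combining this with bounds on the degrees of the $h_{x,x,z}$ and on the graded supports of the socles of the $\theta_{z}L(d)$, one sees that in $\mathrm{Hom}_{\mathcal{O}_{0}^{\mathbb{Z}}}(L(d),\theta_{x}\theta_{x}L(d)\langle 2\mathbf{a}(x)\rangle)$ only the summand $\theta_{x}L(d)\langle\mathbf{a}(x)\rangle$ contributes, and it contributes exactly $\mathrm{Hom}_{\mathcal{O}_{0}^{\mathbb{Z}}}(L(d),\theta_{x}L(d)\langle\mathbf{a}(x)\rangle)$. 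By graded self-duality of $\theta_{x}L(d)$, this last space has dimension equal to the multiplicity of $L(d)\langle\mathbf{a}(x)\rangle$ in the top (equivalently, up to shift, the socle) of $\theta_{x}L(d)$, namely $[\theta_{x}L(d):L(d)\langle\mathbf{a}(x)\rangle]=[v^{-\mathbf{a}(x)}][\theta_{x}L(d):L(d)]$. This proves (b).

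The concluding ring-theoretic step and part (a) (essentially a direct citation) are routine; the genuine obstacle is part (b) — specifically, controlling the off-diagonal summands $\theta_{z}L(d)$ with $z\neq x$ in $\theta_{x}\theta_{x}L(d)$ so that none of them contributes an extra grading-preserving endomorphism. This is precisely where the finer cell-theoretic and Lusztig-theoretic input assembled in \cite{KMM23} (including the facts surrounding \Cref{Eq:ThetaxL=ThetayLImpliesxly} and \Cref{Eq:ROrderDomDes}) is indispensable.
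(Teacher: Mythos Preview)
Your overall strategy coincides with the paper's: show that the graded endomorphism algebra $E=\bigoplus_i E^i$ of $\theta_xL(d)$ is non-negatively graded with one-dimensional degree-zero part, hence local. The paper obtains this by the same adjunction/decomposition mechanism you sketch, so the approaches are essentially the same. Two points, however, deserve correction.

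First, your claim that $\theta_x$ is ``self-adjoint up to the shift $\langle 2\mathbf{a}(x)\rangle$'' is incorrect: in the normalization used here $\theta_y$ is biadjoint to $\theta_{y^{-1}}$ with no shift, so for the involution $x$ one simply has $E^i\cong\mathrm{Hom}_{\mathcal{O}_0^{\mathbb{Z}}}(\theta_x\theta_xL(d),L(d)\langle i\rangle)$. The shifts that actually enter come not from adjunction but from the decomposition of $\theta_x\theta_x$. The paper invokes \cite[Lemma~5.3]{KMM23}, which gives
\[
\theta_x\theta_x\;\cong\;\theta_x\langle-\mathbf{a}(x)\rangle\ \oplus\ \bigoplus_{w}\bigoplus_{i<\mathbf{a}(w)}\theta_w\langle-i\rangle^{\oplus m(w,i)},
\]
and then \cite[Theorem~2.2]{KMM23} to kill every summand in degree~$0$ except the first; this simultaneously yields non-negativity of $E$ and the identification $E^0=\mathrm{Hom}_{\mathcal{O}_0^{\mathbb{Z}}}(\theta_xL(d),L(d)\langle\mathbf{a}(x)\rangle)$. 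Your Hecke-algebraic reformulation via the $h_{x,x,z}$ and the Duflo property of $x$ is aiming at the same decomposition, but is less precise and, with the erroneous $2\mathbf{a}(x)$ shift in place, the bookkeeping does not match up.

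Second, in part~(b) you assert that $\dim E^0$ equals the composition multiplicity $[\theta_xL(d):L(d)\langle\mathbf{a}(x)\rangle]$, justifying this by self-duality. Self-duality only tells you the top and socle multiplicities agree; it does not force every composition factor $L(d)\langle\mathbf{a}(x)\rangle$ into the top. The paper does not claim equality: it observes the obvious inequality $\dim E^0\le[\theta_xL(d):L(d)\langle\mathbf{a}(x)\rangle]=1$ and then shows $\dim E^0\ge1$ using the non-zero natural transformation $\eta:\theta_x\to\theta_e\langle\mathbf{a}(x)\rangle$ from \cite[Section~7]{MM16}, together with the fact (\cite[Proposition~17]{MM11}) that $\theta_x$ annihilates $\mathrm{coker}\,\eta$, so that $\eta_{L(d)}\neq0$ once $\theta_xL(d)\neq0$. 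You could equally well replace this lower bound by the observation that $\mathrm{id}\in E^0$; either way, only the inequality $\le$ is needed, and your stronger equality is neither proved nor required.
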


\begin{proof}
This result is implicit from the work of \cite{KMM23}, we simply collect all of the details together. We prove this by showing that the endomorphism space of $\theta_{x}L(d)$ is positively graded with $1$-dimensional component of degree zero. Firstly, for any $y\in\mathfrak{S}_{n}$, the projective functor $\theta_{y}$ is adjoint to $\theta_{y^{-1}}$. Also, by \cite[Lemma~5.3]{KMM23}, we have
\[ \theta_{x^{-1}}\theta_{x}\cong\theta_{x}\langle-\mathbf{a}(x)\rangle\oplus\bigoplus_{w\in\mathfrak{S}_{n}}\bigoplus_{i<\mathbf{a}(w)}\theta_{w}\langle -i\rangle^{\oplus m(w,i)} \]
with $m(w,i)\geq 0$. Thus $\mathsf{Hom}_{\mathcal{O}_{0}^{\mathbb{Z}}}(\theta_{x}L(d),\theta_{x}L(d))=\mathsf{Hom}_{\mathcal{O}_{0}^{\mathbb{Z}}}(\theta_{x^{-1}}\theta_{x}L(d),L(d))$, which becomes
\[ \mathsf{Hom}_{\mathcal{O}_{0}^{\mathbb{Z}}}(\theta_{x}L(d),L(d)\langle\mathbf{a}(x)\rangle)\oplus\bigoplus_{w\in\mathfrak{S}_{n}}\bigoplus_{i<\mathbf{a}(w)}\mathsf{Hom}_{\mathcal{O}_{0}^{\mathbb{Z}}}\left(\theta_{w}L(d),L(d)\langle i\rangle\right)^{\oplus m(w,i)},\]
where we have moved shifts $\langle i\rangle$ in the domain to $\langle-i\rangle$ in the codomain. By \cite[Theorem~2.2]{KMM23}, only the first hom-space above can be non-zero (and all other summands require strictly positive shifts to lead  to a potentially non-zero space), hence
\begin{equation}\label{Eq:EndoSpace}
\mathsf{End}_{\mathcal{O}_{0}^{\mathbb{Z}}}(\theta_{x}L(d))=\mathsf{Hom}_{\mathcal{O}_{0}^{\mathbb{Z}}}(\theta_{x}L(d),L(d)\langle\mathbf{a}(x)\rangle).
\end{equation}
As $x$ is an involution, by \cite[Section 7]{MM16}, there is a non-zero natural transformation
\[ \eta:\theta_{x}\rightarrow\theta_{e}\langle\mathbf{a}(x)\rangle. \]
By \cite[Proposition~17]{MM11}, the cokernel of $\eta$ is killed by $\theta_{x}$. Thus $\eta_{L(d)}$ is non-zero since $\theta_{x}L(d)\neq 0$ (as $[\theta_{x}L(d):L(d)\langle\mathbf{a}(x)\rangle]\neq 0$). So the dimension of the space of morphisms $\theta_{x}L(d)\rightarrow L(d)\langle\mathbf{a}(x)\rangle$ is at least $1$. But, as $[\theta_{x}L(d):L(d)\langle\mathbf{a}(x)\rangle]=1$, the dimension cannot be any larger that $1$, and so it is precisely $1$. Hence, the endomorphism space of \Cref{Eq:EndoSpace} has dimension $1$.
\end{proof}

We now tackle the $22$ cases presented at the start of this section.

\textbf{Case} (2)(a): We have $d:=\mathsf{13_{1}45_{3}}$, $\mathsf{sh}(d)=(3,2,2)$, and $D(d)=\{1,3,5\}$. By \Cref{Lem:ICSimp} we only need to check $\mathsf{KM}(x_{i},d)=\mathtt{true}$ for each $i\in[3]$ where
\[ x_{1}:=\mathsf{1235_{1}6_{5}}, \hspace{2mm} x_{2}:=\mathsf{3_{1}4_{3}5_{2}6_{3}}, \hspace{2mm} x_{3}:=\mathsf{13_{1}5_{3}6_{5}}. \]
By GAP3 computations, we have $[\theta_{x_{1}}L(d)]=0$ and
\[ [\theta_{x_{2}}L(d): L(d)]=[\theta_{x_{3}}L(d): L(d)]=v^{5}+5v^{3}+10v+10v^{-1}+5v^{-3}+v^{-5}. \]
We have that $\mathbf{a}(x_{2})=\mathbf{a}(x_{3})=5$, hence this case holds by \Cref{Lem:IndCoeff1AtaFunc}. 

\textbf{Case} (3)(a): We have $d:=\mathsf{14_{3}5_{3}6_{4}}$, $\mathsf{sh}(d)=(3,3,1)$, and $D(d)=\{1,4,5\}$. From \Cref{Lem:ICSimp}, we only need to check $\mathsf{KM}(x,d)=\mathtt{true}$ for $x:=\mathsf{1235_{1}6_{5}}$. By GAP3 computations, $[\theta_{x}L(d)]=0$.

\textbf{Case} (4)(a): We have $d:=\mathsf{14_{3}5_{3}6_{4}}$, $\mathsf{sh}(d)=(4,2,1)$, and $D(d)=\{2,4\}$. From \Cref{Lem:ICSimp}, this case is solved, that is, there exists no $x$ in \Cref{Table:5} such that $\mathsf{sh}(d)\prec\mathsf{sh}(x)$ and $D(x)\subseteq D(d)$.

\textbf{Case} (5)(a): We have $d:=\mathsf{23_{2}4_{2}}$, $\mathsf{sh}(d)=(4,1,1,1)$, and $D(d)=\{2,3,4\}$. From \Cref{Lem:ICSimp}, we only need to check $\mathsf{KM}(x_{i},d)=\mathtt{true}$ for $i\in[2]$ where $x_{1}:=\mathsf{3_{1}4_{1}5_{2}6_{3}}$ and $x_{2}:=\mathsf{2_{1}4_{1}5_{2}6_{4}}$. By GAP3 computations, we have that $[\theta_{x_{1}}L(d)]=[\theta_{x_{2}}L(d)]=0$.

\textbf{Case} (6)(a): We have $d:=\mathsf{12_{1}45_{4}}$, $\mathsf{sh}(d)=(3,2,2)$, and $D(d)=\{1,2,4,5\}$. By \Cref{Lem:ICSimp} we only need to check $\mathsf{KM}(x_{i},d)=\mathtt{true}$ for each $i\in[5]$ where
\[ x_{1}:=\mathsf{1235_{1}6_{5}}, \hspace{2mm} x_{2}:=\mathsf{2_{1}4_{1}5_{2}6_{4}}, \hspace{2mm} x_{3}:=\mathsf{2_{1}35_{2}6_{5}}, \hspace{2mm} x_{4}:=\mathsf{2_{1}4_{3}5_{2}6_{4}}, \hspace{2mm} x_{5}:=\mathsf{14_{1}5_{3}6_{4}}. \]
By GAP3 computations, we have $[\theta_{x_{2}}L(d)]=[\theta_{x_{3}}L(d)]=0$ and
\begin{align*}
[\theta_{x_{1}}L(d): L(d)]&=v^{4}+4v^{2}+6+4v^{-2}+v^{-4}, \\
[\theta_{x_{4}}L(d): L(d)]&=v^{5}+4v^{3}+7v+7v^{-1}+4v^{-3}+v^{-5}, \\
[\theta_{x_{5}}L(d): L(d)]&=v^{5}+5v^{3}+10v+10v^{-1}+5v^{-3}+v^{-5}.
\end{align*}
Since $\mathbf{a}(x_{1})=4$ and $\mathbf{a}(x_{4})=\mathbf{a}(x_{5})=5$, such case holds by \Cref{Lem:IndCoeff1AtaFunc}. 

\textbf{Case} (7)(a): We have $d:=\mathsf{123_{1}56_{5}}$, $\mathsf{sh}(d)=(3,2,2)$, and $D(d)=\{1,3,5,6\}$. By \Cref{Lem:ICSimp} we only need to check $\mathsf{KM}(x_{i},d)=\mathtt{true}$ for each $i\in[5]$ where
\[ x_{1}:=\mathsf{123456_{1}}, \hspace{2mm} x_{2}:=\mathsf{1235_{1}6_{5}}, \hspace{2mm} x_{3}:=\mathsf{3_{1}4_{2}56_{3}}, \hspace{2mm} x_{4}:=\mathsf{3_{1}4_{3}5_{2}6_{3}}, \hspace{2mm} x_{5}:=\mathsf{13_{1}5_{3}6_{5}}. \]
By GAP3 computations, we have $[\theta_{x_{3}}L(d)]=[\theta_{x_{4}}L(d)]=0$ and
\begin{align*}
[\theta_{x_{2}}L(d): L(d)]&=v^{4}+4v^{2}+6+4v^{-2}+v^{-4}, \\
[\theta_{x_{5}}L(d): L(d)]&=v^{5}+5v^{3}+10v+10v^{-1}+5v^{-3}+v^{-5}.
\end{align*}
As $\mathbf{a}(x_{2})=4$ and $\mathbf{a}(x_{5})=5$, such case hold by \Cref{Lem:IndCoeff1AtaFunc}. By (a) of \Cref{Prop:IndecompConjLRCellInv},
to prove that $\mathsf{KM}(x_{1},d)=\mathtt{true}$ 
is equivalent to showing that $\mathsf{KM}(123456_{5},d)=\mathtt{true}$. 
This is the most complicated assertion in this section which we write as
a separate statement.

\begin{lem}
$\mathsf{KM}(\mathsf{123456_{5}},d)=\mathtt{true}$, for $d=\mathsf{123_156_5}$.  
\end{lem}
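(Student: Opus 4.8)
The plan is to prove that $\theta_{\mathsf{123456_5}}L(d)$ is indecomposable. Set $x:=\mathsf{123456_5}=s_1s_2s_3s_4s_5s_6s_5$ (a reduced word) and $w:=s_2s_3s_4s_5s_6s_5$, so that $x=s_1w$ with $\ell(x)=\ell(w)+1$ and $\mathsf{Sup}(w)=\{s_2,s_3,s_4,s_5,s_6\}$; in particular $s_1\notin D_L(w)$, so $s_1w>w$.

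The first step is the functor factorization $\theta_x\cong\theta_w\circ\theta_{s_1}$. In the Kazhdan--Lusztig basis one has $\underline{H}_{s_1}\underline{H}_w=\underline{H}_{s_1w}+\sum_{z<w,\,s_1z<z}\mu(z,w)\underline{H}_z$, but every $z<w$ lies in $\mathfrak{S}_7(\{s_2,\dots,s_6\})$ and hence has $s_1\notin D_L(z)$, so the sum is empty and $\underline{H}_{s_1}\underline{H}_w=\underline{H}_x$. Passing through the algebra isomorphism $\mathsf{Gr}(\mathcal{P}_0^{\mathbb{Z}})\cong\mathcal{H}_7^{\mathrm{op}}$ and using the uniqueness of indecomposable projective functors, this yields $\theta_x\cong\theta_w\circ\theta_{s_1}$, whence $\theta_xL(d)\cong\theta_w\bigl(\theta_{s_1}L(d)\bigr)$.

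Next, since $s_1\in D_R(d)=D(d)$, \Cref{Prop:JantsenMiddleStructure} describes $\theta_{s_1}L(d)$ as an indecomposable module of graded length three with simple top and socle $L(d)$ and semisimple Jantzen middle $\mathsf{J}_{s_1}(d)=L(ds_1)\oplus\bigoplus_{z>d,\,zs_1>z}L(z)^{\oplus\mu(d,z)}$. On the other hand $D_L(w)=\{s_2,s_6\}$ is not contained in $D(d)=\{s_1,s_3,s_5,s_6\}$, so \Cref{Eq:ROrderDomDes} gives $w\not\leq_R d=d^{-1}$, and hence $\theta_wL(d)=0$ by \Cref{Eq:ThetaLNonZero}. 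Applying the exact functor $\theta_w$ to $\theta_{s_1}L(d)$ therefore annihilates both the top and socle copies of $L(d)$, and we obtain $\theta_xL(d)\cong\theta_w\mathsf{J}_{s_1}(d)$, which, $\mathsf{J}_{s_1}(d)$ being semisimple, is the direct sum of the modules $\theta_wL(z)^{\oplus\mu(d,z)}$ taken over the simple constituents $L(z)$ of $\mathsf{J}_{s_1}(d)$.

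The final step is to pin down the surviving summands. Since $|\mathsf{Sup}(w)|=5$, \Cref{Cor:KMxSmallSup} guarantees that each $\theta_wL(z)$ is zero or indecomposable, so $\theta_xL(d)$ is indecomposable as soon as exactly one constituent $L(z)$ of $\mathsf{J}_{s_1}(d)$ has $\theta_wL(z)\neq0$, with the corresponding $\mu(d,z)$ equal to $1$. This is verified by a GAP3 computation, which evaluates $[\theta_xL(d)]=\sum_z\mu(d,z)\,[\theta_wL(z)]$ and exhibits it as a single $[\theta_wL(z_0)]\neq0$; then $\theta_{\mathsf{123456_5}}L(d)\cong\theta_wL(z_0)$ is indecomposable and $\mathsf{KM}(\mathsf{123456_5},d)=\mathtt{true}$. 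I expect this last point — ruling out that two or more of the a priori several summands $\theta_wL(z)$ are nonzero, which would split $\theta_xL(d)$ — to be the main obstacle and the place where the explicit computation is unavoidable; everything before it is formal, depending only on the support of $w$ and the descent set of $d$.
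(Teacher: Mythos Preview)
Your setup is correct and yields a genuinely simpler route than the paper's argument. The factorisation $\theta_x\cong\theta_w\circ\theta_{s_1}$ is valid (your support argument for the vanishing of the $\mu$-correction terms is fine), the computation $D_L(w)=\{s_2,s_6\}\not\subseteq D(d)=\{s_1,s_3,s_5,s_6\}$ is right, so $\theta_wL(d)=0$ and $\theta_xL(d)\cong\theta_w\mathsf{J}_{s_1}(d)$ follows by exactness. Since $|\mathsf{Sup}(w)|=5$, \Cref{Cor:KMxSmallSup} forces every $\theta_wL(z)$ to be zero or indecomposable, so indecomposability of $\theta_xL(d)$ reduces exactly to the assertion that a single constituent of $\mathsf{J}_{s_1}(d)$ survives with multiplicity one. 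A GAP3 run listing the constituents $z$ of $\mathsf{J}_{s_1}(d)$ and testing $[\theta_wL(z)]\neq 0$ settles this, and (consistently with the paper's conclusion) only one summand remains. One small remark: your displayed identity $[\theta_xL(d)]=\sum_z\mu(d,z)[\theta_wL(z)]$ should also include the term $[\theta_wL(ds_1)]$ coming from the distinguished summand $L(ds_1)$ of the Jantzen middle.

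By contrast, the paper does not use this factorisation at all. It computes the graded character of $\theta_{\mathsf{123456_5}}L(d)$, observes a unique simple $L(\mathsf{123_256_5})$ in the extremal degree $-4$, and then argues that the sole candidate $L(d)$ in degree $-3$ lies in the radical. To do this it traces the relevant simples back through $\theta_{\mathsf{1}}L(d),\theta_{\mathsf{12}}L(d),\dots$, locates them inside specific Jantzen middles, and establishes by two separate adjunction/contradiction arguments that certain length-two modules $N$ and $M$ occur as subquotients of $\theta_{\mathsf{3}}N$ and $\theta_{\mathsf{4}}M$. Your approach replaces all of this structural analysis by a single appeal to \Cref{Cor:KMxSmallSup} and one Grothendieck-group computation; the paper's method, on the other hand, gives finer information about the extension structure of $\theta_xL(d)$ and does not rely on the support bound for $w$.
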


\begin{proof}
By GAP3 computations, the module $\theta_{\mathsf{123456_{5}}}L(d)$ 
is zero outside degrees $\{0,\pm 1, \pm 2,\pm 3,\pm 4\}$ and it has
one simple module in degree $-4$, being $L(\mathsf{123_{2}56_{5}})$. There are many 
simple modules in degree $-3$, but by adjunction, only the unique
appearance of $L(d)$ in this degree  can give a component of the top. 
So, we need to show that this unique $L(d)$ in degree $-3$ belongs to the 
radical of $\theta_{\mathsf{123456_{5}}}L(d)$, equivalently, that 
$\theta_{\mathsf{123456_{5}}}L(d)$ has an indecomposable quotient of
length two with top $L(\mathsf{123_{2}56_{5}})$ and socle $L(d)$.
Note that $\mathrm{Ext}^1(L(\mathsf{123_{2}56_{5}}),L(d))$ is one-dimensional
as $\mathsf{123_{2}56_{5}}<d$ and the two elements are Bruhat neighbours. 

To this end, we use GAP to compute all modules $\theta_{\mathsf{1}}L(d)$,
$\theta_{\mathsf{12}}L(d)$, $\theta_{\mathsf{123}}L(d)$, 
$\theta_{\mathsf{1234}}L(d)$, $\theta_{\mathsf{12345}}L(d)$,
$\theta_{\mathsf{123456}}L(d)$ and $\theta_{\mathsf{123456_{5}}}L(d)$ and look how 
$L(\mathsf{123_{2}56_{5}})$ reaches degree $-4$ and how $L(d)$ reaches degree
$-3$ in $\theta_{\mathsf{123456_{5}}}L(d)$. We see that this started in
$\theta_{\mathsf{1234}}L(d)$ where $L(\mathsf{123_{2}56_{5}})$ was in degree $-1$
and $L(d)$ was in degree $0$ and they together got shifted by one degree
at each next step. If we could show that the non-split extension between
$L(\mathsf{123_{2}56_{5}})$ in degree $-1$ and $L(d)$ in degree $0$ is realizable
as a subquotient of $\theta_{\mathsf{1234}}L(d)$, then, by adjunction, we obtain
that the appropriate shift of it is realizable as a quotient of 
$\theta_{\mathsf{123456_{5}}}L(d)$, which is exactly what we need.

The simple subquotient $L(\mathsf{123_{2}56_{5}})$ in degree $-1$ of 
$\theta_{\mathsf{1234}}L(d)$ appears inside the Jantzen middle of
$\theta_4 L(\mathsf{123_{2}56_{5}4})$, for a unique subquotient 
$L(\mathsf{123_{2}56_{5}4})$ of $\theta_{\mathsf{123}}L(d)$ in degree $-1$.
The simple subquotient $L(\mathsf{123_{1}56_{5}})$ in degree $0$ of 
$\theta_{\mathsf{1234}}L(d)$ appears inside the Jantzen middle of
$\theta_4 L(\mathsf{123_{1}56_{5}4})$, for a unique subquotient 
$L(\mathsf{123_{1}56_{5}4})$ of $\theta_{\mathsf{123}}L(d)$ in degree $0$.

In turn, 
the simple subquotient $L(\mathsf{123_{2}56_{5}4})$ in degree $-1$ of 
$\theta_{\mathsf{123}}L(d)$ appears inside the Jantzen middle of
$\theta_{\mathsf{3}} L(\mathsf{123_{2}56_{5}})$, for a unique subquotient 
$L(\mathsf{123_{2}56_{5}})$ of $\theta_{\mathsf{12}}L(d)$ in degree $-1$.
The simple subquotient $L(\mathsf{123_{1}56_{5}4})$ in degree $0$ of 
$\theta_{\mathsf{123}}L(d)$ appears inside the Jantzen middle of
$\theta_{\mathsf{3}} L(\mathsf{123_{1}56_{5}})$, for a unique subquotient 
$L(\mathsf{123_{1}56_{5}})$ of $\theta_{\mathsf{12}}L(d)$ in degree $0$.

The module $\theta_{\mathsf{12}}L(d)$ is isomorphic to 
$\theta_{\mathsf{2}} L(\mathsf{123_{2}56_{5}})$, in particular, it has simple
top $L(\mathsf{123_{2}56_{5}})$ in degree $-1$ and hence surjects
onto the unique up to isomorphism
indecomposable module $N$ of length two with
top $L(\mathsf{123_{2}56_{5}})$ in degree $-1$ and socle
$L(\mathsf{123_{1}56_{5}})$ in degree $0$. It remains to show that
$\theta_{\mathsf{4}}\theta_{\mathsf{3}} N$ contains $N$ as a subquotient.

Denote by $M$  the unique up to isomorphism indecomposable module
of length two with top $L(\mathsf{123_{2}56_{5}4})$ in degree $-1$ and socle
$L(\mathsf{123_{1}56_{5}4})$ in degree $0$. GAP3 computations show that 
the Jantzen middle of $\theta_{\mathsf{4}} L(\mathsf{123_{2}56_{5}4})$ is
just $L(\mathsf{123_{2}56_{5}})$ and the Jantzen middle of
$\theta_{\mathsf{4}} L(\mathsf{123_{1}56_{5}4})$ is just 
$L(\mathsf{123_{1}56_{5}})$.
If we assume that $N$ is not a subquotient of $\theta_{\mathsf{4}} M$,
then the latter module has a uniserial quotient of length three
with top
$L(\mathsf{123_{2}56_{5}4})$, middle $L(\mathsf{123_{1}56_{5}4})$ and socle
$L(\mathsf{123_{1}56_{5}})$. Note that $\theta_{\mathsf{3}}$ kills the first
two but not the socle. By adjunction it follows that 
this length three module must be a submodule of 
$\theta_{\mathsf{3}}\theta_{\mathsf{3}} L(123_{1}56_{5})
\cong \theta_{\mathsf{3}} L(\mathsf{123_{1}56_{5}})\oplus 
\theta_{\mathsf{3}} L(\mathsf{123_{1}56_{5}})$,
which is, clearly, false. Hence $N$ is a subquotient of
$\theta_{\mathsf{4}} M$. 

It remains to show that $M$ is a subquotient of $\theta_{\mathsf{3}} N$.
The Jantzen middles of both $\theta_{\mathsf{3}} L(\mathsf{123_{1}56_{5}})$
and $\theta_{\mathsf{3}} L(\mathsf{123_{2}56_{5}})$ contain many simples. However,
one can list all of them and check, using GAP3 or SageMath,
that the only simple in the Jantzen middle for
$\theta_{\mathsf{3}} L(\mathsf{123_{1}56_{5}})$ that has a non-trivial 
first extension with $L(\mathsf{123_{2}56_{5}4})$ is the module
$L(\mathsf{123_{1}56_{5}4})$. Therefore, if we assume
$M$ is not a subquotient of $\theta_{\mathsf{3}} N$, the latter module
has a uniserial quotient of length three with top $L(\mathsf{123_{2}56_{5}})$,
middle $L(\mathsf{123_{1}56_{5}})$ and socle $L(\mathsf{123_{1}56_{5}4})$.
We note that $\theta_{\mathsf{4}}$ kills the first two but not the
last one. Hence, by adjunction, this length three module
must be a submodule of $\theta_{\mathsf{4}}\theta_{\mathsf{4}}L(\mathsf{123_{1}56_{5}4})
\cong \theta_{\mathsf{4}}L(\mathsf{123_{1}56_{5}4})\oplus 
\theta_{\mathsf{4}}L(\mathsf{123_{1}56_{5}4})$ which
is, clearly, false. This completes the proof.
\end{proof}

\textbf{Case} (8)(a): We have $d:=\mathsf{134_{3}}$, $\mathsf{sh}(d)=(4,2,1)$, and $D(d)=\{1,3,4\}$. From \Cref{Lem:ICSimp}, this case is solved, that is, there exists no $x$ in \Cref{Table:5} such that $\mathsf{sh}(d)\prec\mathsf{sh}(x)$ and $D(x)\subseteq D(d)$.

\textbf{Case} (9)(a): We have $d:=\mathsf{135_{3}6_{5}}$, $\mathsf{sh}(d)=(3,3,1)$, and $D(d)=\{1,3,5\}$. By \Cref{Lem:ICSimp} we only need to check $\mathsf{KM}(x,d)=\mathtt{true}$ for $x:=\mathsf{1235_{1}6_{5}}$. By GAP3 computations,
\[ [\theta_{x}L(d): L(d)]=v^{4}+5v^{2}+8+5v^{-2}+v^{-4}. \]
Since $\mathbf{a}(x)=4$, this case holds by \Cref{Lem:IndCoeff1AtaFunc}. 

\textbf{Case} (10)(a): We have $d:=\mathsf{2_{1}3_{1}5_{2}6_{5}}$, $\mathsf{sh}(d)=(3,3,1)$, and $D(d)=\{2,3,5\}$. From \Cref{Lem:ICSimp}, this case is solved, that is, there exists no $x$ in \Cref{Table:5} with $\mathsf{sh}(d)\prec\mathsf{sh}(x)$ and $D(x)\subseteq D(d)$.

\textbf{Case} (11)(a): We have $d:=\mathsf{23_{2}4_{2}6}$, $\mathsf{sh}(d)=(3,2,1,1)$, and $D(d)=\{2,3,4,6\}$. By \Cref{Lem:ICSimp}, we only need to check $\mathsf{KM}(x_{i},d)=\mathtt{true}$ for each $i\in[7]$ where
\[ x_{1}:=\mathsf{2_{1}3456_{2}}, \hspace{2mm} x_{2}:=\mathsf{3_{1}4_{2}56_{3}}, \hspace{2mm} x_{3}:=\mathsf{3_{1}4_{1}5_{2}6_{3}}, \hspace{2mm} x_{4}:=\mathsf{2_{1}4_{1}5_{2}6_{4}}, \]
\[ x_{5}:=\mathsf{2_{1}4_{2}56_{4}}, \hspace{2mm} x_{6}:=\mathsf{2_{1}3_{2}4_{1}56_{2}}, \hspace{2mm} x_{7}:=\mathsf{2_{1}3_{1}456_{2}}. \]
By GAP3 computations, we have $[\theta_{x_{5}}L(d)]=[\theta_{x_{7}}L(d)]=0$ and
\begin{align*}
[\theta_{x_{1}}L(d): L(d)]&=v^{4}+4v^{2}+6+4v^{-2}+v^{-4}, \\
[\theta_{x_{2}}L(d): L(d)]&=v^{5}+5v^{3}+10v+10v^{-1}+5v^{-3}+v^{-5}, \\
[\theta_{x_{3}}L(d): L(d)]=[\theta_{x_{4}}L(d): L(d)]&=v^{5}+4v^{3}+7v+7v^{-1}+4v^{-3}+v^{-5}, \\
[\theta_{x_{6}}L(d): L(d)]&=v^{6}+5v^{4}+11v^{2}+14+11v^{-2}+5v^{-4}+v^{-6}.
\end{align*}
As $\mathbf{a}(x_{1})=4$, $\mathbf{a}(x_{2})=\mathbf{a}(x_{3})=5$, and $\mathbf{a}(x_{6})=6$, this case holds by  \Cref{Lem:IndCoeff1AtaFunc}.

\textbf{Case} (12)(a): We have $d:=\mathsf{23_{2}5}$, $\mathsf{sh}(d)=(4,2,1)$, and $D(d)=\{2,3,5\}$. From \Cref{Lem:ICSimp}, this case is solved, that is, there exists no $x$ in \Cref{Table:5} such that $\mathsf{sh}(d)\prec\mathsf{sh}(x)$ and $D(x)\subseteq D(d)$.

\textbf{Case} (13)(a): We have $d:=\mathsf{23_{2}6}$, $\mathsf{sh}(d)=(4,2,1)$, and $D(d)=\{2,3,6\}$. From \Cref{Lem:ICSimp}, this case is solved, that is, there exists no $x$ in \Cref{Table:5} such that $\mathsf{sh}(d)\prec\mathsf{sh}(x)$ and $D(x)\subseteq D(d)$.

\textbf{Case} (14)(a): We have $d:=\mathsf{234_{3}5_{2}}$, $\mathsf{sh}(d)=(4,1,1,1)$, and $D(d)=\{2,4,5\}$. From \Cref{Lem:ICSimp}, this case is solved, that is, there exists no $x$ in \Cref{Table:5} with $\mathsf{sh}(d)\prec\mathsf{sh}(x)$ and $D(x)\subseteq D(d)$.

\textbf{Case} (15)(a): We have $d:=\mathsf{45_{4}}$, $\mathsf{sh}(d)=(5,1,1)$, and $D(d)=\{4,5\}$. From \Cref{Lem:ICSimp}, this case is solved, that is, there exists no $x$ in \Cref{Table:5} such that $\mathsf{sh}(d)\prec\mathsf{sh}(x)$ and $D(x)\subseteq D(d)$.

\textbf{Case} (16)(a): We have $d:=\mathsf{1345_{3}}$, $\mathsf{sh}(d)=(4,2,1)$, and $D(d)=\{1,3,5\}$. From \Cref{Lem:ICSimp}, this case is solved, that is, there exists no $x$ in \Cref{Table:5} such that $\mathsf{sh}(d)\prec\mathsf{sh}(x)$ and $D(x)\subseteq D(d)$.

\textbf{Case} (17): We have $d:=\mathsf{12_{1}56_{5}}$, $\mathsf{sh}(d)=(3,2,2)$, and $D(d)=\{1,2,5,6\}$. From \Cref{Lem:ICSimp}, we only need to check $\mathsf{KM}(x_{i},d)=\mathtt{true}$ for each $i\in[4]$ where
\[ x_{1}:=\mathsf{123456_{1}}, \hspace{2mm} x_{2}:=\mathsf{2_{1}3456_{2}}, \hspace{2mm} x_{3}:=\mathsf{1235_{1}6_{5}}, \hspace{2mm} x_{4}:=\mathsf{2_{1}35_{2}6_{5}}. \]
By GAP3 computations, we have $[\theta_{x_{4}}L(d)]=0$ and
\[ [\theta_{x_{2}}L(d): L(d)]=[\theta_{x_{3}}L(d): L(d)]=v^{4}+4v^{2}+10+4v^{-2}+v^{-4}. \]
We have that $\mathbf{a}(x_{2})=\mathbf{a}(x_{3})=4$, hence these holds by \Cref{Lem:IndCoeff1AtaFunc}. Lastly, we need to check that $\mathsf{KM}(x_{1},d)=\mathtt{true}$. By (a) of \Cref{Prop:IndecompConjLRCellInv}, it suffices to prove $\mathsf{KM}(123456_{5},d)=\mathtt{true}$. We show this by proving that $\theta_{123456_{5}}L(d)$ has simple top, and hence is indecomposable. Firstly, GAP3 computations tell us that $\theta_{123456_{5}}L(d)$ 
is zero in degrees outside $\{0,\pm 1,\pm 2,\pm 3,\pm 4\}$ and has
one simple in degree   $-4$. Since $\mathbf{a}(123456_{5})=3$, it suffices to prove that no simple module in degree $-3$ belongs to the top of $\theta_{123456_{5}}L(d)$. If some $L(w)\langle3\rangle$ appears in the top, then, by adjunction of $\theta_{123456_{5}}$ and $\theta_{56_{1}}$, $L(d)\langle3\rangle$ will appear in the top of $\theta_{56_{1}}L(w)$. One can confirm via GAP3 computations, that for every $w\in\mathfrak{S}_{n}$ such that $L(w)\langle3\rangle$ appears in $\theta_{123456_{5}}L(d)$, either $\theta_{56_{1}}L(d)=0$ or there is no $L(d)\langle3\rangle$ appearing in $\theta_{56_{1}}L(d)$, thus we are done.

\textbf{Case} (18): We have $d:=\mathsf{2_{1}35_{2}6_{5}}$, $\mathsf{sh}(d)=(3,3,1)$, and $D(d)=\{2,5\}$. From \Cref{Lem:ICSimp}, this case is solved, that is, there exists no $x$ in \Cref{Table:5} such that $\mathsf{sh}(d)\prec\mathsf{sh}(x)$ and $D(x)\subseteq D(d)$.

\textbf{Case} (20): We have $d:=\mathsf{134_{3}6}$, $\mathsf{sh}(d)=(3,3,1)$, and $D(d)=\{1,3,4,6\}$. By \Cref{Lem:ICSimp} we only need to check $\mathsf{KM}(x,d)=\mathtt{true}$ for $x:=\mathsf{123456_{1}}$. By GAP3 computations,
\[ [\theta_{x}L(d): L(d)]=v^{3}+3v+3v^{-1}+v^{-3}. \]
Since $\mathbf{a}(x)=3$, this case holds by \Cref{Lem:IndCoeff1AtaFunc}. 

\textbf{Case} (21): We have $d:=\mathsf{34_{3}}$, $\mathsf{sh}(d)=(5,1,1)$, and $D(d)=\{3,4\}$. From \Cref{Lem:ICSimp}, this case is solved, that is, there exists no $x$ in \Cref{Table:5} such that $\mathsf{sh}(d)\prec\mathsf{sh}(x)$ and $D(x)\subseteq D(d)$.

\textbf{Case} (22): We have $d:=\mathsf{13_{2}4_{1}56_{3}}$, $\mathsf{sh}(d)=(3,2,2)$, and $D(d)=\{1,3,4,6\}$. By \Cref{Lem:ICSimp} we only need to check $\mathsf{KM}(x_{i},d)=\mathtt{true}$ for each $i\in[5]$ where
\[ x_{1}:=\mathsf{123456_{1}}, \hspace{2mm} x_{2}:=\mathsf{3_{1}4_{2}56_{3}}, \hspace{2mm} x_{3}:=\mathsf{3_{1}4_{1}5_{2}6_{3}}, \hspace{2mm} x_{4}:=\mathsf{14_{1}5_{3}6_{4}}. \]
By GAP3 computations, we have $[\theta_{x_{2}}L(d)]=[\theta_{x_{3}}L(d)]=[\theta_{x_{4}}L(d)]=0$ and
\[ [\theta_{x_{1}}L(d): L(d)]=v^{3}+3v+3v^{-1}+v^{-3}. \]
As $\mathbf{a}(x_{1})=3$, this case hold by \Cref{Lem:IndCoeff1AtaFunc}. 

\textbf{Case} (19): We have $d:=\mathsf{23_{2}4_{2}5_{2}}$, $\mathsf{sh}(d)=(3,1,1,1,1)$, and $D(d)=\{2,3,4,5\}$. By \Cref{Lem:ICSimp}, we only need to check $\mathsf{KM}(x_{i},d)=\mathtt{true}$ for each $i\in[7]$ where
\[ x_{1}:=\mathsf{3_{1}4_{1}5_{2}6_{3}}, \hspace{2mm} x_{2}:=\mathsf{2_{1}4_{1}5_{2}6_{4}}, \hspace{2mm} x_{3}:=\mathsf{3_{1}4_{3}5_{2}6_{3}}, \hspace{2mm} x_{4}:=\mathsf{2_{1}3_{1}5_{2}6_{3}}, \]
\[ x_{5}:=\mathsf{2_{1}35_{2}6_{5}}, \hspace{2mm} x_{6}:=\mathsf{2_{1}4_{3}5_{2}6_{4}}, \hspace{2mm} x_{7}:=\mathsf{2_{1}3_{2}4_{3}5_{1}6_{2}}. \]
By GAP3 computations, we have that
\[ [\theta_{x_{4}}L(d): L(d)]=[\theta_{x_{5}}L(d): L(d)]=[\theta_{x_{6}}L(d): L(d)]=v^{5}+4v^{3}+7v+7v^{-1}+4v^{-3}+v^{-5}. \]
We have $\mathbf{a}(x_{4})=\mathbf{a}(x_{5})=\mathbf{a}(x_{6})=5$, so these hold by \Cref{Lem:IndCoeff1AtaFunc}. By (c) of \Cref{Prop:IndecompConjLRCellInv},
\begin{align*}
\mathsf{KM}(x_{1},d)&=\mathsf{KM}(dw_{0},w_{0}x_{1})=\mathsf{KM}(\mathsf{123456_{1}},\mathsf{12_{1}56_{5}}) \ \text{(Case (17))}, \\
\mathsf{KM}(x_{2},d)&=\mathsf{KM}(dw_{0},w_{0}x_{2})=\mathsf{KM}(123456_{1},\mathsf{12_{1}4_{3}56_{5}}) \ \text{(Case (7)(a))}, \\
\mathsf{KM}(x_{3},d)&=\mathsf{KM}(dw_{0},w_{0}x_{3})=\mathsf{KM}(123456_{1},\mathsf{13_{1}56_{4}}) \ \text{(Case (7)(b))}, \\
\mathsf{KM}(x_{7},d)&=\mathsf{KM}(dw_{0},w_{0}x_{7})=\mathsf{KM}(123456_{1},\mathsf{134_{3}6}) \ \text{(Case (20))},
\end{align*}
The first equality holds from Case (17), the second from Case (7)(a) since $\mathsf{12_{1}4_{3}56_{5}}\sim_{L}\mathsf{2_{1}4_{1}5_{2}6_{4}}$, the third from Case (7)(b) since $\mathsf{13_{1}56_{4}}\sim_{L}\mathsf{3_{1}4_{3}5_{2}6_{3}}$, while the forth from Case (20).

\textbf{Case} (1)(a): We have $d:=\mathsf{124_{1}56_{4}}$, $\mathsf{sh}(d)=(3,2,2)$, and $D(d)=\{1,4,6\}$. By \Cref{Lem:ICSimp} we only need to check $\mathsf{KM}(x_{i},d)=\mathtt{true}$ for each $i\in[2]$ where
\[ x_{1}:=\mathsf{123456_{1}}, \hspace{2mm} x_{2}:=\mathsf{14_{1}5_{3}6_{4}}. \]
By GAP3 computations, $[\theta_{x_{2}}L(d): L(d)]=v^{5}+7v^{3}+16v+16v^{-1}+7v^{-3}+v^{-5}$. Lastly, by (c) of \Cref{Prop:IndecompConjLRCellInv}, we have $\mathsf{KM}(x_{1},d)=\mathsf{KM}(dw_{0},w_{0}x_{1})=\mathsf{KM}(\mathsf{23_{1}5_{2}6_{4}},\mathsf{23_{2}4_{2}5_{2}})$. This is solved in Case (19) above since $23_{1}5_{2}6_{4}\sim_{L}2_{1}4_{3}5_{2}6_{4}$ (which equals $x_{6}$ from Case (19)).

All cases have now be solved, which results in the following:
\begin{thm} \label{Thm:IC7}
The Indecomposability Conjecture 
(\Cref{Conj:IndecompConj}) holds for $\mathfrak{sl}_{7}$.
\end{thm}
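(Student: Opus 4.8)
The plan is simply to assemble the pieces established above. First I would invoke part~(b) of \Cref{Prop:IndecompConjLRCellInv}: the truth value $\mathsf{KM}(\star,y)$ is constant along left Kazhdan--Lusztig cells, and every left cell of $\mathfrak{S}_{7}$ contains exactly one involution, so it suffices to prove $\mathsf{KM}(\star,d)=\mathtt{true}$ for each of the $232$ elements $d\in\mathfrak{I}_{7}$. These are then sorted into four families. For $d$ in \Cref{Table:1} (those lying in a left cell of a parabolic lift $ww_{0}^{I}w_{0}$) the claim is \Cref{Lem:ParaLifts7Ind} combined again with \Cref{Prop:IndecompConjLRCellInv}(b). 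For $d$ in \Cref{Table:2} (the fully commutative involutions) it is recorded in Section~4.3 of \cite{MMM24}. For the three Kostant positive cases (6), (10), (11) of \Cref{SubSec:RemainingCases}, \Cref{Thm:KiffKhKM} gives $\mathsf{KM}(\star,d)=\mathtt{true}$ for free from $\mathsf{K}(d)=\mathtt{true}$. This leaves the Kostant negative involutions of \Cref{Table:3} and the Kostant negative remaining cases of \Cref{SubSec:RemainingCases}, which under the symmetry $w\mapsto w_{0}ww_{0}$ collapse to the $22$ representatives (1)--(22) listed at the start of \Cref{Sec:IndecompConjA6}.

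For this last family I would first apply \Cref{Lem:ICSimp}: via \Cref{Prop:IndecompConjLRCellInv}(a), \Cref{Eq:ThetaLNonZero}, \Cref{Eq:ROrderDomDes} and \cite[Lemma~6]{KiM16} the element $x$ in $\mathsf{KM}(x,d)$ may be taken to be an involution with $\mathsf{sh}(d)\prec\mathsf{sh}(x)$ and $D(x)\subset D(d)$, and \Cref{Cor:KMxSmallSup} disposes of every such $x$ of support size at most $5$; what survives is the finite list of \Cref{Table:5}. Then, for each surviving pair $(x,d)$, one of three mechanisms applies, exactly as carried out in the case-by-case analysis above: either a GAP3 computation shows $[\theta_{x}L(d)]=0$ so that $\theta_{x}L(d)=0$; or the graded multiplicity satisfies $[v^{-\mathbf{a}(x)}]\,[\theta_{x}L(d):L(d)]=1$ and \Cref{Lem:IndCoeff1AtaFunc} forces indecomposability; or, in the hard cases (7)(a) and (17) with $x=\mathsf{123456_{1}}$, one passes to $\mathsf{123456_{5}}$ by \Cref{Prop:IndecompConjLRCellInv}(a) and controls the top of $\theta_{\mathsf{123456_{5}}}L(d)$ directly.

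The real obstacle is exactly that last mechanism. For $x=\mathsf{123456_{1}}$ the constant term of $[\theta_{x}L(d):L(d)]$ exceeds $1$, so \Cref{Lem:IndCoeff1AtaFunc} does not apply and one must actually identify the radical of $\theta_{x}L(d)$ rather than count multiplicities. The argument in Case~(7)(a) is delicate: one observes that the unique simple in the extremal negative degree is $L(\mathsf{123_{2}56_{5}})$, that $\mathrm{Ext}^{1}(L(\mathsf{123_{2}56_{5}}),L(d))$ is one-dimensional since the two elements are Bruhat neighbours, and then runs an inductive ``lifting'' argument along the chain $\theta_{\mathsf{1}},\theta_{\mathsf{12}},\dots,\theta_{\mathsf{123456_{5}}}$, using adjunction and the explicit (GAP3-computed) Jantzen middles to promote a non-split length-two extension inside $\theta_{\mathsf{1234}}L(d)$ to the required indecomposable length-two quotient of $\theta_{\mathsf{123456_{5}}}L(d)$. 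Case~(17) is a milder variant of the same idea, where one only needs to rule out each candidate simple in the critical degree appearing in the top, again via adjunction against $\theta_{\mathsf{56_{1}}}$. Everything else reduces to citing earlier results or to routine, if extensive, GAP3 verification; combining the four families completes the proof.
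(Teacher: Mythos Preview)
Your proposal is correct and follows essentially the same approach as the paper's own proof: reduce to involutions via \Cref{Prop:IndecompConjLRCellInv}(b), dispose of the parabolic-lift, fully commutative, and Kostant positive families as you describe, and then treat the remaining $22$ cases through \Cref{Lem:ICSimp} and the case-by-case analysis. One small omission: your ``three mechanisms'' list is incomplete---in Cases~(19) and~(1)(a) the paper also uses \Cref{Prop:IndecompConjLRCellInv}(c) to transfer certain pairs $\mathsf{KM}(x_i,d)$ back to already-settled cases (e.g.\ $\mathsf{KM}(x_1,d)$ in Case~(19) reduces to Case~(17)), so there is a fourth mechanism, but this is covered by your closing remark about citing earlier results.
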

With an analogous proof to that of \Cref{Cor:KMxSmallSup}, we immediately have the following:
\begin{cor}\label{Cor:KMxSmallSup6}
Let $x\in\mathfrak{S}_{n}$ be such that $|\mathsf{Sup}(x)|\leq6$, then $\mathsf{KM}(x,\star)=\mathtt{true}$.
\end{cor}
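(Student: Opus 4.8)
The plan is to copy, almost verbatim, the argument given for \Cref{Cor:KMxSmallSup}, the only change being that the input ``the Indecomposability Conjecture holds for $n\leq 6$'' is upgraded to ``$\ldots$ holds for $n\leq 7$'', which is precisely the content of \Cref{Thm:IC7} (together with \cite{CMZ19} for the smaller ranks). So I would fix $x\in\mathfrak{S}_{n}$ with $|\mathsf{Sup}(x)|\leq 6$ and a permutation $w\in\mathfrak{S}_{n}$ with $\theta_{x}L(w)\neq 0$; it then suffices to prove that $\theta_{x}L(w)$ is indecomposable.

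First I would set $I:=\mathsf{Sup}(x)$ and invoke the notation of \Cref{SubSec:CategoriesOl}, writing $w=zy$ with $y\in\mathfrak{S}_{n}(I)$ and $z\in\mathsf{X}(I)$ uniquely determined. By \Cref{Thm:IndecomposabilityFromParabolic} (and the accompanying remark, which simultaneously gives $\theta_{x}L_{\mathfrak{l}}(y)\neq 0$), the module $\theta_{x}L(w)=\theta_{x}L(zy)$ is indecomposable if and only if $\theta_{x}L_{\mathfrak{l}}(y)$ is. Applying the equivalence $\mathcal{F}$, this is equivalent to the indecomposability of
\[ \mathcal{F}(\theta_{x}L_{\mathfrak{l}}(y))=\theta_{x_{1}}L(y_{1})\boxtimes\cdots\boxtimes\theta_{x_{k}}L(y_{k}), \]
and an outer tensor product of non-zero modules is indecomposable precisely when each tensor factor is. Each $\theta_{x_{i}}L(y_{i})$ is non-zero (because $\theta_{x}L_{\mathfrak{l}}(y)\neq 0$) and lies in $\mathcal{O}_{0}^{\mathbb{Z}}(\mathfrak{sl}_{n_{i}})$ with $n_{i}=|\mathsf{Sup}(x_{i})|+1\leq|\mathsf{Sup}(x)|+1\leq 7$; hence \Cref{Thm:IC7} applies in each factor and yields indecomposability of every $\theta_{x_{i}}L(y_{i})$, and therefore of $\theta_{x}L(w)$. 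This gives $\mathsf{KM}(x,\star)=\mathtt{true}$.

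There is no real obstacle to overcome here: the parabolic reduction machinery is identical to the one already used in \Cref{Cor:KMxSmallSup}, and the sole new ingredient is \Cref{Thm:IC7}. The only point worth flagging is that the hypothesis $|\mathsf{Sup}(x)|\leq 6$ is exactly what is needed to force $n_{i}\leq 7$ in every Levi block, so the statement is sharp with respect to our current knowledge; extending it to support of size $7$ would require the Indecomposability Conjecture for $n=8$.
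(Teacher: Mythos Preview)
Your proposal is correct and is exactly what the paper intends: the paper itself does not spell out a proof but simply states that the argument is analogous to that of \Cref{Cor:KMxSmallSup}, with the Indecomposability Conjecture for $\mathfrak{sl}_7$ (\Cref{Thm:IC7}) replacing the $n\leq 6$ input from \cite{CMZ19}. Your write-up faithfully unpacks this analogy, including the parabolic reduction via \Cref{Thm:IndecomposabilityFromParabolic} and the bound $n_i\leq 7$ on each Levi factor.
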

We end with a brief discussion regarding K{\aa}hrstr{\"o}m's Conjecture. By \Cref{Thm:IC7} and \Cref{Eq:KiffKhKM}, we now know that $\mathsf{K}(w)=\mathsf{Kh}(w)$ for all $w\in\mathfrak{S}_{7}$. Therefore, to establish K{\aa}hrstr{\"o}m's Conjecture for $A_{6}$, it suffices to show that for all $d\in\mathfrak{I}_{7}$,
\begin{equation}\label{Eq:KhSuffice}
\mathsf{Kh}(d)=[\mathsf{Kh}](d)=[\mathsf{Kh}^{\mathsf{ev}}](d).
\end{equation}
These equalities are seen to immediately hold whenever $d$ is Kostant negative, i.e. $\mathsf{Kh}(w)=\mathtt{false}$. Moreover, K{\aa}hrstr{\"o}m's Conjecture was shown to hold for all fully commutative involutions in \cite[Theorem 5.1]{MMM24}, and we proved computationally that such equalities hold for the cases $(6)$, $(10)$, and $(11)$ in \Cref{SubSec:RemainingCases}. Thus, to prove that K{\aa}hrstr{\"o}m's Conjecture holds in $A_{6}$, it suffices to show that the above equalities hold for all Kostant positive involutions $d$ belonging to \Cref{Table:1}, of which there are 106 (99 if we remove 6 fully commutative involutions and the longest element $w_{0}$, which are known to uphold the above equalities). 

By running code in GAP3, we have confirmed the graded part of \Cref{Eq:KhSuffice} in almost all cases (so far) as follows.

\begin{prop}\label{Prop:GrKhGAP3}
Let $Q\subset \mathfrak{I}_{7}$ be the set consisting of the involutions
\[ \mathsf{12_{1}3_{2}4_{3}5_{1}6_{1}}, \hspace{1mm} \mathsf{2_{1}3_{1}4_{1}5_{1}6_{2}} \hspace{1mm} \text{ and } \hspace{1mm} \mathsf{123_{1}4_{1}5_{2}6_{1}}. \]
Then, for all $d\in\mathfrak{I}_{7}\setminus Q$, we have the equality $\mathsf{Kh}(d)=[\mathsf{Kh}](d)$.
\end{prop}

Computations to resolve the three remaining involutions continue.


\section*{Appendix: GAP3 Computations}
\label{Sec:Appendix}

In this brief appendix we give some examples of how we used the CHEVIE package in GAP3 (version of 2024, Jan. 7) to do the computations mentioned throughout the paper. From such examples, it will be easy to verify almost all of the computations done in this paper. We also discuss the only non-trivial piece of code which allowed for the verification of \Cref{Prop:GrKhGAP3}. 

\subsection*{Set up}

Once the CHEVIE package is installed, input the following into the GAP3 terminal:

\mybox{
\begin{lstlisting}[language=GAP]
gap> v:=X(Rationals);; v.name:="v";;
gap> W:=CoxeterGroup("A",6);; H:=Hecke(W,v^2,v);;
gap> D:=Basis(H,"D'");; C:=Basis(H,"C'");;
\end{lstlisting}
}

The first line sets up $\mathsf{v}$, which corresponds to the variable $v$ in the ring $\mathbb{A}=\mathbb{Z}[v,v^{-1}]$. The second line sets $\mathsf{W}$ as the symmetric group $\mathfrak{S}_{7}$, and $\mathsf{H}$ as the corresponding Hecke algebra $\mathcal{H}_{7}$. Lastly, the third line sets $\mathsf{D}$ and $\mathsf{C}$ as the dual and ordinary Kazhdan-Lusztig basis, respectively. Comparing to the notation used in the paper, we can think of $\mathsf{D}$ as the symbol $\underline{\hat{H}}$, and $\mathsf{C}$ as the symbol $\underline{H}$.

\subsection*{Example 1}

As given in Case (1)(a) in \Cref{SubSec:RemainingCases}, consider the permutations of $\mathfrak{S}_{7}$ given by 
\[ x:=12456_{2},  \hspace{1mm} y:=124_{2}56, \hspace{1mm} \text{ and } \hspace{1mm} d:=124_{1}56_{4}. \]
During that case it was claimed that, from computations, we have the decomposition
\begin{equation}\label{Eq:AppEx1-1}
\theta_{y}\theta_{65}\cong\theta_{123_{2}6}\oplus\theta_{123_{2}56_{5}}\oplus\theta_{x}, \hspace{1mm} \text{ equivalently } \hspace{1mm} \underline{H}_{65}\underline{H}_{y}=\underline{H}_{123_{2}6}+\underline{H}_{123_{2}56_{5}}+\underline{H}_{x}.
\end{equation}
The following is an example of how one could double check this computation in GAP3:

\mybox{
\begin{lstlisting}[language=GAP]
gap> C(C(6,5)*C(1,2,4,3,2,5,6));
C'(1,2,3,2,6)+C'(1,2,3,2,5,6,5)+C'(1,2,4,5,6,5,4,3,2)
\end{lstlisting}
}

The first line tells GAP3 to compute $\underline{H}_{65}\underline{H}_{y}$ in terms of the Kazhdan-Lusztig basis, and the second line is the output which agrees with the right hand side of the equality in \eqref{Eq:AppEx1-1}. 

Another claim made in this case is that we have the equality
\begin{equation}\label{Eq:AppEx1-2}
[\theta_{65}L(d)]=[L(d)]+(v+v^{-1})[L(d5)], \hspace{1mm} \text{ equivalently } \hspace{1mm} \underline{\hat{H}}_{d}\underline{H}_{65}=\underline{\hat{H}}_{d}+(v+v^{-1})\underline{\hat{H}}_{d5}.
\end{equation}
The following is an example of how one could double check this computation in GAP3:

\mybox{
\begin{lstlisting}[language=GAP]
gap> D(D(1,2,4,3,2,1,5,6,5,4)*C(6,5));
D'(1,2,4,3,2,1,5,6,5,4)+(v+v^-1)D'(1,2,4,3,2,1,5,4,6,5,4)
\end{lstlisting}
}

The first line tells GAP3 to compute $\underline{\hat{H}}_{d}\underline{H}_{65}$ in terms of the dual Kazhdan-Lusztig basis, and the second line is the output which agrees with the right hand side of the second equality in \eqref{Eq:AppEx1-2}.

\subsection*{Example 2}

As given in Case (20) in \Cref{Sec:IndecompConjA6}, consider the permutations of $\mathfrak{S}_{7}$ given by 
\[ x:=123456_{1} \hspace{1mm} \text{ and } \hspace{1mm} d:=134_{3}6. \]
In this case it was claimed, by computations, that 
\begin{equation}\label{Eq:AppEx2-1}
[\theta_{x}L(d):L(d)]=v^{3}+3v+3v^{-1}+v^{-3}, \hspace{1mm} \text{ equivalently } \hspace{1mm} [\underline{H}_{d}](\underline{H}_{d}\underline{H}_{x})=v^{3}+3v+3v^{-1}+v^{-3}.
\end{equation}
Recall that the expression $[\underline{H}_{d}](\underline{H}_{d}\underline{H}_{x})$ denotes the coefficient of $\underline{H}_{d}$ in the product $\underline{H}_{d}\underline{H}_{x}$ when written in terms of the Kazhdan-Lusztig basis, and the equivalence of the equations in \eqref{Eq:AppEx2-1} is from \cite[Proposition 3.3]{KMM23}, see also the discussion preceding \Cref{Lem:IndCoeff1AtaFunc}. 

The following is an example of how one could double check this computation in GAP3:

\mybox{
\begin{lstlisting}[language=GAP]
gap> x:=[1,2,3,4,5,6,5,4,3,2,1];; d:=[1,3,4,3,6];;
gap> Coefficient(C(d)*C(x),d);
v^3 + 3*v + 3*v^(-1) + v^(-3)
\end{lstlisting}
}

The first line sets $x$ and $d$ to be the Coxeter words described above, the second line tells GAP3 to calculate $[\underline{H}_{d}](\underline{H}_{d}\underline{H}_{x})$, and the third line is the output which agrees with \eqref{Eq:AppEx2-1}.

\subsection*{Table 1} Out of the three tables present in this paper, \Cref{Table:1} was the only one which was not produced by hand. The following is an example of how one could recover \Cref{Table:1} in GAP3:

\mybox{
\begin{lstlisting}[language=GAP]
gap> w0:=LongestCoxeterElement(W);;
gap> Invs:=[];;
gap> Table1:=[];;
gap> for i in [0,1,2,3,4,5] do
gap> for I in Combinations([1,2,3,4,5,6],i) do
gap> w0I:=LongestCoxeterElement(W,I);
gap> for w in Elements(ReflectionSubgroup(W,I)) do
gap> for d in Filtered(Elements(LeftCell(W,w*w0I*w0)), x -> x*x = ()) do
gap> if (CoxeterWord(W,d) in Invs)=false then
gap> Add(Invs,CoxeterWord(W,d));
gap> Add(Table1,[CoxeterWord(W,d),I,CoxeterWord(W,w)]);
gap> fi; od; od; od; od;
\end{lstlisting}
}

In summary, this code first sets $\mathsf{Invs}$ and $\mathsf{Table1}$ to be empty lists, it then runs over every subset $I\subset S_{7}$ and every permutation $w\in\langle I\rangle$, finds the unique involution $d$ such that $d\sim_{L}ww_{0}^{P}w_{0}$, adds it to the list $\mathsf{Invs}$, and adds the triple $(d,I,w)$ to the list $\mathsf{Table1}$ (as long as $d$ was not already present in $\mathsf{Invs}$). As a result, the list $\mathsf{Invs}$ now contains every involution $d$ from \Cref{Table:1}, and the list $\mathsf{Table1}$ contains the first three columns from \Cref{Table:1} in the form of triples $(d,I,w)$. 

For example, the first three columns in the sixth row of \Cref{Table:1} tell us that
\[ 12_{1}3_{1}45_{4}6_{1}\sim_{L}(24)w_{0}^{\{2,3,4\}}w_{0}. \]
This can be verified in GAP3 by checking that the triple $(12_{1}3_{1}45_{4}6_{1},\{2,3,4\},24)$ belongs to the list $\mathsf{Table1}$. To do this, for example, one can do the following:  

\mybox{
\begin{lstlisting}[language=GAP]
gap> [[1,2,1,3,2,1,4,5,4,6,5,4,3,2,1],[2,3,4],[2,4]] in Table1;
true
\end{lstlisting}
}

As for the fourth and eighth columns in \Cref{Table:1}, which record the truth value of Kostant's problem for the corresponding $w$ (equivalently the corresponding $d$), this information was input by hand since it is easily deduced for most of the permutations $w$, and from \Cref{SubSec:Kostant'sProblemforSmallerCases}.

\subsection*{Checking \Cref{Prop:GrKhGAP3}}

All of the computations mentioned above (which account for almost all the computations done in the paper) only take seconds (in real time) to compute. 
The only exception is the code used to confirm \Cref{Prop:GrKhGAP3}. For this,  we created an executable file for GAP3 to run, 
and due to its length, we do not include it here. However, we are happy to provide this file upon request.  

In summary, this code does the following:
\begin{itemize}
\item it goes over all  Kostant positive involution $d\in\mathfrak{I}_{7}$,
\item for each such $d$, it creates a list of all involution $d'\in\mathfrak{I}_{7}$
such that $d'\leq_R d$;
\item then, for any pair $d_1,d_2$ of two different elements on this list satisfying
$d_1\sim_J d_2$, it lists all  pairs
$(x,y)$ of elements such that $x\sim_R d_1$, $y\sim_R d_2$ and $x\sim_L y$;
\item finally,  for each such pair $(x,y)$, it confirmed the inequality 
$\underline{\hat{H}}_{d}\underline{H}_{x}\neq\underline{\hat{H}}_{d}\underline{H}_{y}$.
\end{itemize}
By \cite[Proposition~33]{CM25}, this implies $\mathsf{Kh}(d)=[\mathsf{Kh}](d)$. After running our code on two standard desktop computers for about two months of real time, three involutions remain to be resolved.

It is also worth remarking that we have ran another code which does what is described above in the four points except for the third point, where we only consider one such pair $(x,y)$ instead of all such pairs. This code has completed and confirms the inequality $\underline{\hat{H}}_{d}\underline{H}_{x}\neq\underline{\hat{H}}_{d}\underline{H}_{y}$ for all Kostant positive involution $d\in\mathfrak{I}_{7}$ and such pairs $(x,y)$. This on its own is not enough to confirm $\mathsf{Kh}(d)=[\mathsf{Kh}](d)$, since we have not considered all pairs $(x,y)$, but it is conjectured to be enough by \cite[Conjecture 41]{CM25}.


\vspace{2mm}

\noindent
Department of Mathematics, Uppsala University, Box. 480,
SE-75106, Uppsala, \\ SWEDEN, 
emails:
{\tt samuel.creedon\symbol{64}math.uu.se}\hspace{5mm}
{\tt mazor\symbol{64}math.uu.se}

\end{document}